\documentclass[11pt]{amsart}
\usepackage[T1]{fontenc}
\usepackage{lmodern,amsmath,amssymb,amsthm,mathtools,mathrsfs,microtype, xcolor,xurl}
\usepackage[margin=1.2in]{geometry}
\usepackage{needspace}
\usepackage[normalem]{ulem}
\colorlet{BLUE}{blue}

\usepackage[colorlinks=true,linkcolor=blue,citecolor=blue,urlcolor=blue]{hyperref}
\numberwithin{equation}{section}
\newtheorem{theorem}{Theorem}[section]
\newtheorem{proposition}[theorem]{Proposition}
\newtheorem{lemma}[theorem]{Lemma}
\newtheorem{conjecture}[theorem]{Conjecture}
\newtheorem{remark}[theorem]{Remark}
\newcommand{\D}{\mathbb D}
\newcommand{\T}{\mathbb T}
\newcommand{\HH}{\mathscr H}

\newcommand{\OO}{\mathcal O}
\newcommand{\TD}{\mathcal T}
\newcommand{\KK}{\mathcal K}
\newcommand{\DD}{\mathcal D}
\newcommand{\tr}{\operatorname{tr}}
\newcommand{\ran}{\operatorname{ran}}
\newcommand{\Dom}{\operatorname{Dom}}
\newcommand{\rank}{\operatorname{rank}}

\title[Strict Dirichlet P\'olya Inequality]{Strict Dirichlet P\'olya Inequality for Simply Connected Planar Domains}
\date{\today}

\author{Quanyu Tang}
\address{School of Mathematical Sciences\\
University of Science and Technology of China\\
Hefei 230026, P.R. China}
\email{tangquanyu@mail.ustc.edu.cn}

\author{Zuoqin Wang}
\thanks{Z. Wang is partially supported by the National Natural Science Foundation of China (Grant No.~12571064).}
\address{School of Mathematical Sciences\\
University of Science and Technology of China\\
Hefei 230026, P.R. China}
\email{wangzuoq@ustc.edu.cn}

\begin{document}

\begin{abstract}
We establish the strict Dirichlet P\'olya inequality on every bounded simply connected planar domain. The proof uses a unitary boundary transport formulation of the Helmholtz equation and relates Dirichlet resonances to unitary eigenphase crossings through the Dirichlet-to-Neumann map. An exact area identity yields the sharp Weyl coefficient, while a Ky Fan estimate leaves a strictly positive remainder in the counting estimate. The general case follows by conformal exhaustion.
\end{abstract}

\maketitle

\section{Introduction}\label{d:sec:intro}

Let $\Omega\subset\mathbb R^2$ be a bounded planar domain.  The Dirichlet Laplacian is the self-adjoint operator associated with the closed quadratic form
\[
q_\Omega[u]=\int_\Omega |\nabla u|^2,\qquad u\in H_0^1(\Omega)
\]
on
$H_0^1(\Omega)$.  Equivalently, it is the Friedrichs extension of $-\Delta$ initially defined on $C_c^\infty(\Omega)$.
Its eigenvalues, repeated with multiplicity, are denoted by
\[
0<\lambda_1(\Omega)\le\lambda_2(\Omega)\le\cdots.
\] 
For comparison, whenever the Neumann Laplacian has discrete spectrum---in particular, for Lipschitz $\Omega$---we denote its eigenvalues by
\[
 0=\mu_0(\Omega)<\mu_1(\Omega)\le\mu_2(\Omega)\le\cdots.
\]
These standard form constructions and the min--max principle are used
throughout.

We use the counting conventions
\[
 N_D^{\le}(E)=\#\{j\ge1:\lambda_j(\Omega)\le E\},\qquad
 N_N^{<}(E)=\#\{j\ge0:\mu_j(\Omega)<E\}.
\]
When the domain must be displayed explicitly, we write
\[
    N_{D,\Omega}^{\le}(E)
    =\#\{j\ge1:\lambda_j(\Omega)\le E\}.
\]
With these opposite endpoint conventions, the strict
pointwise inequalities below are exactly equivalent to the strict
counting inequalities at every positive energy.

Under the standard hypotheses guaranteeing Weyl asymptotics, Weyl's law, originating in Weyl's pioneering work \cite{Weyl1911}, gives the common leading term for both counting functions (independently of the endpoint convention):
\[
 N_D^{\le}(E),\ N_N^{<}(E)\sim\frac{|\Omega|}{4\pi}E,
 \qquad E\to\infty.
\]
For piecewise smooth boundaries, Weyl further conjectured the two-term expansion \cite{Weyl1913}
\[
N_D^{\le}(E)
=\frac{|\Omega|}{4\pi}E
-\frac{|\partial\Omega|}{4\pi}E^{1/2}
+o(E^{1/2}),
\qquad
N_N^{<}(E)
=\frac{|\Omega|}{4\pi}E
+\frac{|\partial\Omega|}{4\pi}E^{1/2}
+o(E^{1/2}),
\]
which predicts that the first correction to the volume term is governed by the boundary length, with opposite signs for the Dirichlet and Neumann problems.

P\'olya first stated his conjecture in 1954 as an exercise entitled ``A New Conjecture'' in \emph{Mathematics and Plausible Reasoning} \cite{Polya54}. A commonly used modern formulation is the following pair of non-strict inequalities.
\begin{conjecture}[P\'olya's conjecture for planar domains]
    For every bounded planar domain $\Omega$ (assuming suitable boundary
    regularity for the Neumann case), and every $j\ge1$,\footnote{With our
        indexing $0=\mu_0<\mu_1\le\mu_2\le\cdots$, P\'olya's original
        formulation reads
        \[
        \mu_{j-1}(\Omega)<\frac{4\pi j}{|\Omega|}
    <\lambda_j(\Omega).
        \]
        Thus the original inequalities are strict, but the Neumann statement
        involves $\mu_{j-1}$ rather than $\mu_j$ and is therefore weaker by one
        eigenvalue index.  The formulation \eqref{eq:polya-conjecture} is the
        index-aligned version commonly used in the modern literature.}
    \begin{equation}\label{eq:polya-conjecture}
        \mu_j(\Omega)\le \frac{4\pi j}{|\Omega|}
        \le \lambda_j(\Omega).
    \end{equation}
\end{conjecture}

With the endpoint conventions fixed above, the counting-function form is
\[
N_D^{\le}(E)\le \frac{|\Omega|}{4\pi}E
\le N_N^{<}(E),
\qquad E>0.
\]

The result proved in this paper is the strict Dirichlet half of the modern
conjecture: for every bounded simply connected planar domain,
\[
 \lambda_j(\Omega)>\frac{4\pi j}{|\Omega|},\qquad j\ge1.
\]
Equivalently, $N_D^{\le}(E)<|\Omega|E/(4\pi)$ for every $E>0$.
Thus the Dirichlet inequality is strict, exactly as in P\'olya's original
formulation, but is proved here for the full class of bounded simply
connected planar domains.  The analogous non-strict P\'olya inequalities
may of course be posed in every Euclidean dimension.

P\'olya proved in 1961 the
corresponding non-strict inequalities for tiling domains \cite{Polya61}; the
additional restriction in his Neumann tiling argument was later
removed by Kellner \cite{Kellner}.

Recent progress has established the conjecture for a number of
non-tiling geometries. Filonov, Levitin, Polterovich, and Sher proved
the disk and sector cases, and the Dirichlet ball case in arbitrary
dimension \cite{FLPS}; their later work treats the Dirichlet problem for planar annuli
\cite{FLPSannuli}. The
remaining higher-dimensional Neumann ball problem was recently settled
independently in
\cite{FLPS26} and \cite{Li26}.

Other progress concerns domains with additional geometric structure.  Laptev proved that the P\'olya inequality passes from one factor to a Cartesian product \cite[Theorem~2.8]{Laptev}. More recently, He and one of the authors \cite{HW} established the conjecture for sufficiently thin product domains without assuming one factor to satisfy the P\'olya  inequality.
Jiang and Lin constructed in \cite{JiangLin} a further class obtained by removing pieces from domains for which
a sufficiently strong P\'olya estimate is already available, for example, strip-tiling domains.  For further results and related spectral bounds, see, for example, \cite{Berezin,Filonov,FrankLarson,FS,GMWZ,LiYau,Lin2017,Kroger}.

The purpose of this paper is to prove the strict Dirichlet P\'olya
inequality for every bounded simply connected planar domain.

\begin{theorem}
    \label{d:thm:main}
Let $\Omega\subset\mathbb R^2$ be a bounded nonempty simply connected
domain. Then, for every $j\ge1$,
\begin{equation}\label{d:eq:strict-eigenvalues}
 |\Omega|\lambda_j(\Omega)>4\pi j.
\end{equation}
Equivalently,
\begin{equation}\label{d:eq:polya}
 N_D^{\le}(E)<\frac{|\Omega|E}{4\pi}\qquad(E>0).
\end{equation}
\end{theorem}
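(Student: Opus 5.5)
We follow the strategy announced in the abstract: first treat domains with smooth (say real-analytic) boundary, then pass to arbitrary bounded simply connected $\Omega$ by conformal exhaustion. For the smooth case, fix $E>0$ and write $k=\sqrt E$. Using a Riemann map $\varphi\colon\D\to\Omega$, we transport the Helmholtz problem $-\Delta u=k^2u$ to the unit circle $\T$. There we build a one-parameter family of unitary boundary operators $U(k)$ on $L^2(\T)$, a Cayley-type transform of the Dirichlet-to-Neumann map $\Lambda_\Omega(k)$ conjugated by $\varphi$. Its eigenphases move monotonically in $k$, and a Dirichlet eigenvalue $k_0^2$ occurs exactly when an eigenphase crosses a fixed reference point, such as $-1$. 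The DtN map has a pole there, so the Cayley transform stays bounded. Crossings are counted with multiplicity, the crossing multiplicity equalling the dimension of the Dirichlet eigenspace, and endpoint closed at $E$ to match $N_D^{\le}$.

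The argument then has three steps.

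\emph{Step 1: phase-counting identity.} We aim to show that $N_D^{\le}(E)$ equals the total eigenphase winding of $U(k)$ over $k\in(0,\sqrt E\,]$, up to a floor correction. Writing this winding as $\frac1{2\pi}\int_0^{\sqrt E}\tr\bigl(-iU(k)^*U'(k)\bigr)\,dk$ restricted suitably, or as a spectral-flow count, expresses the counting function through the derivative $\partial_k\Lambda_\Omega(k)$.

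\emph{Step 2: exact area identity.} The Hellmann--Feynman/Rellich-type formula gives $\partial_k\Lambda$ in terms of $2k\int_\Omega|u_f|^2$, where $u_f$ is the Helmholtz extension of the boundary datum $f$. Summing over an orthonormal basis, we expect a trace identity: the total phase speed equals $\frac{|\Omega|}{2\pi}k$ minus an explicit nonnegative defect. The point is that $\sum|u_{f_n}(x)|^2$ reproduces the local Weyl density, pulled back through $|\varphi'|^2$ so that the Jacobian integrates to $|\Omega|$ exactly. Integrating in $k$ then gives $\frac{|\Omega|E}{4\pi}$ as the sharp coefficient.

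\emph{Step 3: Ky Fan remainder.} The counting function only sees the $N$ eigenphases that actually crossed, not the full trace. We will bound it by a Ky Fan maximum principle: the sum of the top $N$ eigenvalues of the Hermitian phase-speed operator is at most $\sup\tr(PA)$ over rank-$N$ projections $P$. Comparing with the full trace leaves a remainder that is strictly positive, since the phase-speed operator has positive trace-class tail. This gives $N_D^{\le}(E)<|\Omega|E/(4\pi)$ strictly.

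\textbf{Main obstacle.} We expect the main difficulty to be Steps 1 and 3 together. The eigenphase count must be dominated by a trace with no loss at the endpoint, and strictness must survive when $E$ is itself an eigenvalue. We would first try to prove monotonicity of the eigenphases via a positivity statement for $\partial_k$ of the Cayley transform. Strictness would then come from showing the Ky Fan inequality cannot be an equality, because the relevant operator is not of finite rank.

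\textbf{General domains.} Let $\Omega_n=\varphi(r_n\D)$ with $r_n\uparrow1$. These domains have analytic boundary, increase to $\Omega$, and satisfy $|\Omega_n|\to|\Omega|$. Domain monotonicity gives $\lambda_j(\Omega_n)\downarrow\lambda_j(\Omega)$, and the smooth case gives $|\Omega_n|\lambda_j(\Omega_n)>4\pi j$. The limit, however, only yields the non-strict inequality $|\Omega|\lambda_j(\Omega)\ge4\pi j$. To recover strictness we need a quantitative version of Step 3: the remainder should be bounded below uniformly along the exhaustion, for instance by a quantity depending only on $j$ and $|\Omega_n|$, or through continuity of the defect term. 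Alternatively, one can rule out equality directly, using analytic structure in $r$ of $r^2$-scaling: $|\Omega_r|\lambda_j(\Omega_r)$ varies monotonically in $r$, so equality at $r=1$ would contradict strictness at $r<1$. We would try the uniform remainder first.
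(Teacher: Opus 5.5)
Your architecture matches the paper's: a positive trace-class phase speed, Ky Fan applied to its top $N$ eigenvalues, a strict tail from injectivity, and conformal exhaustion. But Step 2 carries the whole argument, and the family you propose gives no mechanism for it. For a Cayley transform $U(k)$ of $k\mapsto\Lambda_\Omega(k)$, the number of crossings of the reference point does not depend on the scaling $\mu$ in $(\Lambda-i\mu)(\Lambda+i\mu)^{-1}$, but the trace of the phase speed does. So an identity with $|\Omega|$ on the right cannot be structural. In the most natural normalization it is simply false. For the unit disk with $\mu=1$, the DtN eigenvalues are $\lambda_n(k)\approx|n|-\frac{k^2}{2(|n|+1)}$. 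The phase-speed trace is then $\approx 2k\sum_{n\in\mathbb Z}\frac{1}{(|n|+1)(1+n^2)}\approx 3.5k$ for small $k$, which exceeds $|\Omega|k=\pi k$. Nothing makes $\sum_n|u_{f_n}(x)|^2$ an exact multiple of the Jacobian; that would be a Weyl law with a sign-controlled remainder, essentially the statement to be proved.

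Step 1 has an independent defect: $U(0)\neq I$. The constants span $\ker\Lambda_\Omega(0)$, so at $k=0$ one phase sits at the point antipodal to the Dirichlet reference point. The lowest DtN branch runs from $0$ to $-\infty$ as $k^2\uparrow\lambda_1(\Omega)$. The first crossing therefore costs only $\pi$ of phase, so the Ky Fan count cannot charge the $2\pi$ per crossing that the sharp constant requires.

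The paper obtains both features from a different family. It fixes the energy and grows the domains $\Omega_r=F(r\D)$ from a point. Its unitary is the monodromy of the normalized jets $(2i/k)^{n+1}\partial^{n+1}u$ on $\ell^2(\mathbb N_0)$, realized on Herglotz waves. Then $U_0=I$, so each crossing of $e^{i\alpha}$ costs at least $\alpha\to2\pi$, and strict domain monotonicity makes each Dirichlet eigenvalue $\le k^2$ cross exactly once. The coordinate generators satisfy $[X,Y]=\frac{ik^2}{2}P_0$ with $\tr P_0=1$. This gives $\tr L_{r,k}=\frac{k^2}{2}\int_0^{2\pi}r|F'(re^{i\theta})|^2\,d\theta$, hence the area exactly. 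The price is a separate link from resonances to phases: negative divergence of the DtN form, Herglotz density, and the Cayley--DtN identity with a bounded remainder.

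For general domains you rightly note that the limit gives only $\ge$, but neither proposed repair works. No lower bound on the remainder depending only on $j$ and $|\Omega_n|$ is available. Nor is $r\mapsto|\Omega_r|\lambda_j(\Omega_r)$ monotone. Take a thin-necked dumbbell of two unit disks, with $F(0)$ at the center of one lobe, and $j=2$. The product tends to $\pi j_{1,1}^2\approx 46.1$ as $r\to0$ but is about $2\pi j_{0,1}^2\approx 36.3$ at $r=1$.

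The paper's fix is structural: the deformation parameter is the radius of the same Riemann map. Hence the remainder for $\Omega_s$ is $2\int_0^s\tau_j(L_{r,k_s})\,dr$, built from the generator $L_{r,k}$ of $F$ itself. Joint trace-norm continuity in $(r,k)$ and Fatou's lemma then give $|\Omega|\lambda_j(\Omega)\ge 4\pi j+2\int_0^1\tau_j(L_{r,k})\,dr$. The integrand is positive because $L_{r,k}$ is injective. An energy-parametrized family on the degenerating boundaries $\partial\Omega_n$ has no such nesting.
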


\begin{remark}\label{rmk:neumann-strict-polya}
The strict Neumann inequality will be treated in a companion paper currently in preparation \cite{TW}. Together with the present theorem, the companion result will give the two-sided strict counting inequality
\begin{equation*}
    N_D^{\le}(E)<\frac{|\Omega|E}{4\pi}<N_N^{<}(E)
\end{equation*}
for any bounded simply connected Lipschitz planar domain $\Omega$.
\end{remark} 

\begin{remark}
	By \cite[Theorem 1.4]{SafarovFilonov2010}, together with the estimate obtained in the proof of their Theorem 2.1, if $\Omega$ is a bounded smooth planar domain, then
	\[
	N_D^{\le}(E)
	\le
	\frac{|\Omega|}{4\pi}E
	-\frac{|\partial\Omega|}{8\pi}E^{1/2}
	+o(E^{1/2}).
	\]
	Hence, for all sufficiently large $E$,
	\[
	N_D^{\le}(E)
	\le
	\frac{|\Omega|}{4\pi}E
	-\frac{|\partial\Omega|}{10\pi}E^{1/2}.
	\]
	Since only finitely many eigenvalues lie below the high-energy threshold, the strict inequalities \eqref{d:eq:strict-eigenvalues} at those eigenvalues provide a positive minimum gap. Decreasing the constant if necessary, we therefore obtain a uniform estimate
	\[
	N_D^{\le}(E)
	\le
	\frac{|\Omega|}{4\pi}\bigl(E-CE^{1/2}\bigr)_+,
	\qquad E>0,
	\]
	for every bounded simply connected smooth planar domain $\Omega$, where $C=C(\Omega)>0$.
	Thus, by adapting the removal arguments developed in \cite{JiangLin}; see also \cite{HeJiangLinWang,HW}, one can show that, after removing sufficiently small holes from $\Omega$, the resulting domain still satisfies the strict Dirichlet P\'olya inequality. In this way, one can construct many non-simply connected domains satisfying P\'olya's conjecture.
\end{remark}  
 
We now outline the proof and its main new ingredients.  We identify $\mathbb R^2$ with $\mathbb C$ and write $\D=\{z\in\mathbb C:|z|<1\}$, with $\T=\partial\D$. Fix an energy
$E=k^2>0$.  We first assume that $\Omega=F(\D)$, where $F$ is holomorphic
and injective on a neighborhood of $\overline\D$, and consider the nested
domains $\Omega_r=F(r\D)$.  We call $r$ a Dirichlet resonance
at energy $E$ if $E$ is a Dirichlet eigenvalue of $\Omega_r$;
otherwise, $r$ is nonresonant.  By continuity and strict domain monotonicity,
every ordered Dirichlet eigenvalue that reaches $E$ does so at a unique
radius.  Our aim is to count these resonances with multiplicity by relating
them to the motion of a unitary family whose total variation is controlled
by the area of $\Omega$.

Section~\ref{d:sec:transport} starts from the boundary derivative sequence
$\partial u,\partial^2u,\ldots$ of a Helmholtz solution. The Helmholtz equation and the boundary
chain rule yield a coupled first-order differential
system for the boundary derivatives
$\partial u,\partial^2u,\ldots$.
After normalization, its coefficients can be expressed
in terms of the unilateral shift $S$ and its adjoint. Its homogeneous
Hilbert-space realization defines a unitary boundary transport and the
monodromy $V_r$.  An important difficulty is that the derivative sequence
of an actual Dirichlet eigenfunction need not belong to $\ell^2$.
We therefore construct the transport using Herglotz waves, whose
normalized derivatives are square-summable, and recover the Dirichlet
spectral information later from boundary data.   

In Section~\ref{d:sec:geometry}, we vary the domain radially and remove
the effect of the moving starting point of its boundary by unitary
conjugation.  We then pass to the adjoint unitary family $U_r$.  The
rank-one identity $[S^*,S]=P_0$, where $P_0$ projects onto the
first coordinate of $\ell^2$, yields a radial generator $L_{r,k}=-iU_r^*\partial_rU_r$ that is positive, injective,
and trace class, together with the exact area identity
\[
  \int_0^1\operatorname{tr}L_{r,k}\,dr
  =\frac{k^2}{2}|\Omega|.
\]
Positivity determines the direction of the unitary eigenphases, while
the trace identity gives precisely the coefficient required in P\'olya's
inequality.  This is the first central new ingredient of the proof.

Sections~\ref{d:sec:dtn-pole} and~\ref{d:sec:dirichlet-branch} link this
unitary motion to the actual Dirichlet spectrum. Under the conformal change of variables $z\mapsto F(rz)$, the Helmholtz equation on $\Omega_r$ becomes $(-\Delta-k^2w_r)\widetilde u=0$ on the fixed unit disk $\D$, where $w_r(z)=r^2|F'(rz)|^2$. Let $A_r=-\Delta-k^2w_r$ be the corresponding operator on $L^2(\D)$ with Dirichlet boundary condition. A resonance of multiplicity $m$ at $r=r_*$ is then equivalent to a zero eigenvalue of $A_{r_*}$ of multiplicity $m$. As $r$ approaches $r_*$ from below, the corresponding $m$ eigenvalues of $A_r$ approach zero from the positive side. 
Consequently, the Dirichlet-to-Neumann quadratic form diverges to $-\infty$ on $m$ independent boundary directions.  Away from resonance, Herglotz traces are dense and the
monodromy $V_r$ has no fixed vector.  
A boundary identity then relates the Dirichlet-to-Neumann map to the Cayley transform of $V_r$, with a remainder bounded on the relevant boundary data. Through this identity, the negative divergence of the Dirichlet-to-Neumann quadratic form implies that at least $m$ eigenphases of $U_r$ approach $2\pi$ from below. This multiplicity-preserving correspondence is the
second central ingredient.

Finally, Section~\ref{d:sec:crossing-count} uses positivity of the
radial generator to count the resulting phase crossings.  The Ky Fan
principle bounds the motion of any $m$ selected phase directions by
$\Phi_m(L_{r,k})$, the sum of the $m$ largest eigenvalues of the
generator.  Counting all Dirichlet resonances, including endpoint
multiplicities, and letting the reference phase approach $2\pi$ gives
\[
  2\pi m\le\int_0^1\Phi_m(L_{r,k})\,dr.
\]
The area identity involves the full trace, whereas the counting
argument uses only this partial sum.  Because $L_{r,k}$ is injective
on the infinite-dimensional space $\ell^2$, the remaining trace is
strictly positive and gives the strict inequality.  The conformal exhaustion argument preserves the strictly positive remainder and extends the conclusion to every bounded simply connected planar domain, completing the proof of Theorem~\ref{d:thm:main} in Subsection~\ref{d:sec:completion}.

\subsection*{Lean formalizations}

Both Theorem~\ref{d:thm:main} and the strict Neumann
P\'olya inequality for bounded simply connected
Lipschitz planar domains, discussed in
Remark~\ref{rmk:neumann-strict-polya},
have been formalized in Lean~4. 
Their formalizations have been verified and registered
in the Palomar Registry as
\href{https://palomar-registry.org/entry?id=PALOMAR-2026-10-09-000003&version=1}
{entry PALOMAR-2026-10-09-000003}
and
\href{https://palomar-registry.org/entry?id=PALOMAR-2026-10-09-000007&version=1}
{entry PALOMAR-2026-10-09-000007},
respectively. 
The corresponding machine-checked developments,
together with their source code and reproducibility
instructions, are publicly available in the
\href{https://github.com/QuanyuTang/polya-dirichlet-simply-connected-formalization-Palomar-version}
{Dirichlet formalization repository} and the
\href{https://github.com/QuanyuTang/polya-neumann-simply-connected-formalization-Palomar-version-v2}
{Neumann formalization repository}, respectively.

\subsection*{Statement on AI usage}  
The authors acknowledge the use of AI tools for exploratory discussions and language refinement during the preparation of this work. 
ChatGPT assisted in developing certain technical
arguments in the proof of the main theorem, notably
the finite-rank approximation and the passage to
the infinite-dimensional spectral-flow setting
in the proof of Lemma~\ref{d:lem:phase}.  
Harmonic's Aristotle and OpenAI's Codex assisted with
the Lean~4 formalizations of the main theorem of this
paper. 
The authors have reviewed and revised all AI-assisted material and take full responsibility for all mathematical statements, proofs, and conclusions in this paper. 

\section{Boundary jets and unitary transport} 
\label{d:sec:transport}

\subsection{Boundary jets of Dirichlet eigenfunctions}

For now let $\Omega$ be a smooth Jordan domain and let
$\gamma:[0,2\pi]\to\partial\Omega$ be a positively oriented smooth
regular parametrization. Suppose that $u$ is a Dirichlet eigenfunction
of $\Omega$ at energy $E=k^2>0$:
\begin{equation}\label{d:eq:starting-eigenfunction}
    \left\{
    \begin{array}{ll}
        -\Delta u=k^2u, & \text{in }\Omega,\\
        u\circ\gamma=0. &
    \end{array}
    \right.
\end{equation}
Smooth-boundary elliptic regularity gives
$u\in C^\infty(\overline\Omega)$; see \cite{LM,McLean}.

The purpose of this subsection is to convert the Helmholtz equation
and the Dirichlet boundary condition into a closed first-order system
for a one-sided sequence of derivatives along $\partial\Omega$.
Identify the plane with $\mathbb C$ and write
\[
\partial=\tfrac12(\partial_x-i\partial_y),
\qquad
\bar\partial=\tfrac12(\partial_x+i\partial_y).
\]
Since $\Delta=4\partial\bar\partial$, the equation
\eqref{d:eq:starting-eigenfunction} gives
\begin{equation}\label{d:eq:raw-recursion}
    \bar\partial\,\partial^{n+1}u
    =-\frac{k^2}{4}\partial^n u,
    \qquad n\ge0.
\end{equation}

Introduce the normalized derivatives
\begin{equation*}
    f_n(z)
    =
    \left(\frac{2i}{k}\right)^{n+1}
    \partial^{\,n+1}u(z),
    \qquad n\ge0.
\end{equation*}
The normalization is chosen so that the two neighboring terms in the
boundary recursion have the same coefficient. Indeed,
\begin{equation}\label{d:eq:jet-recursion-dirichlet}
    \partial f_n=-\frac{ik}{2}f_{n+1},
    \qquad n\ge0,
\end{equation}
while \eqref{d:eq:raw-recursion} gives
\[
\bar\partial f_n
=
-\frac{ik}{2}f_{n-1},
\qquad n\ge1,
\]
and, for the first component,
\[
\bar\partial f_0=-\frac{ik}{2}u.
\]

We now restrict \eqref{d:eq:jet-recursion-dirichlet} and the two companion
identities to the boundary and, for simplicity, continue to write
\[
f_n(\theta):=f_n(\gamma(\theta)).
\]
We thus get a one-sided sequence of functions
\[
f=(f_0,f_1,f_2,\ldots)
\]
on the boundary.
Moreover, by the chain rule,
\[
f_n'
=
\gamma'\,\partial f_n
+
\overline{\gamma'}\,\bar\partial f_n.
\]
Since $u\circ\gamma=0$, one gets
\begin{align*}
    f_0'
    &=
    -\frac{ik}{2}\gamma' f_1,\\
    f_n'
    &=
    -\frac{ik}{2}
    \left(
    \gamma'f_{n+1}
    +
    \overline{\gamma'}f_{n-1}
    \right),
    \qquad n\ge1.
\end{align*}

\subsection{Herglotz waves}

In general, the full derivative sequence $f$ need not take values in $\ell^2$. We therefore realize the Hilbert-space transport first on Herglotz waves, whose normalized derivative sequences are square-summable. The approximation needed to connect this transport with Dirichlet boundary spectral data is stated in Lemma~\ref{d:lem:density}. Thus consider Herglotz waves, i.e., functions of the form
\begin{equation}\label{d:eq:herglotz}
    u_a(z)=\int_0^{2\pi}a(\phi)
    \exp\left[-\frac{ik}{2}(ze^{-i\phi}+\bar z e^{i\phi})\right]
    \frac{d\phi}{2\pi},
\end{equation}
where $a\in L^2(\T,d\phi/(2\pi))$.  Each integrand solves $(\Delta+k^2)u=0$, hence so does $u_a$. The density property required below is a boundary-trace statement off the Dirichlet spectrum; it is proved in Lemma~\ref{d:lem:density} using the standard Herglotz approximation theorem \cite{CK}.

To distinguish this sequence from the componentwise sequence $f$ attached to the actual eigenfunction, denote the normalized derivatives of $u_a$ by
\begin{equation*}
    y_n(z)
    =
    \left(\frac{2i}{k}\right)^{n+1}
    \partial^{n+1}u_a(z),
    \qquad n\ge0.
\end{equation*}

For a Herglotz wave $u_a$, by contrast, differentiating the integrand in \eqref{d:eq:herglotz} once with
respect to $z$ multiplies it by $-ik e^{-i\phi}/2$. Hence
\[
y_n(z)
=
\int_0^{2\pi}
a(\phi)e^{-i(n+1)\phi}
e^{-ik\operatorname{Re}(ze^{-i\phi})}
\frac{d\phi}{2\pi}.
\]
Thus $(y_n(z))_{n\ge0}$ consists of the positive-index Fourier
coefficients of the $L^2$ function
\[
\phi\longmapsto
a(\phi)e^{-ik\operatorname{Re}(ze^{-i\phi})}.
\]
By Parseval,
\begin{equation}\label{d:eq:jet-parseval}
    \sum_{n\ge0}|y_n(z)|^2
    \le \|a\|_{L^2}^2.
\end{equation}
In particular,
\[
y(z)=(y_0(z),y_1(z),\ldots)\in
\HH:=\ell^2(\mathbb N_0).
\]
Throughout the paper, complex inner products are conjugate-linear in the first variable and linear in the second.

Along the boundary, the sequence $y$ satisfies the same recurrence
as the derivative sequence in the preceding subsection, except that
the Herglotz wave $u_a$ itself need not vanish on $\partial\Omega$. Writing
\[
h=u_a\circ\gamma,
\]
we thus obtain
\begin{align*}
    y_0'
    &=
    -\frac{ik}{2}\gamma' y_1
    -\frac{ik}{2}\overline{\gamma'}\,h,
    \\
    y_n'
    &=
    -\frac{ik}{2}
    \left(
    \gamma'y_{n+1}
    +
    \overline{\gamma'}y_{n-1}
    \right),
    \qquad n\ge1.
\end{align*}

Let $(e_n)_{n\ge0}$ be the standard basis of
$\HH=\ell^2(\mathbb N_0)$ and let $S$ be the unilateral shift,
\[
Se_n=e_{n+1}.
\]
For a generic vector $\xi=(\xi_n)_{n\ge0}\in\HH$, one has $(S^*\xi)_n=\xi_{n+1}$, whereas $(S\xi)_0=0$ and $(S\xi)_n=\xi_{n-1}$ for $n\ge1$.
So the preceding equations take the form
\begin{equation}\label{d:eq:driven-jet}
    y'
    =
    C_\theta y
    -
    \frac{ik}{2}\overline{\gamma'}\,h\,e_0,
    \qquad
    C_\theta
    =
    -\frac{ik}{2}
    \left(
    \gamma'S^*
    +
    \overline{\gamma'}S
    \right).
\end{equation}
Thus $C_\theta$ is the $\ell^2$ realization of the coefficient pattern
appearing in the componentwise Dirichlet boundary recursion.

The homogeneous equation
\begin{equation}\label{d:eq:homogeneous-transport}
    y'=C_\theta y
\end{equation}
will be used to define the unitary transport and its monodromy.

The connection with the actual Dirichlet eigenspace will be made later through the negative divergence of the Dirichlet-to-Neumann quadratic form, rather than by assigning an $\ell^2$ derivative sequence to each eigenfunction.

The following lemma will be used later:
\begin{lemma}
    \label{d:lem:zeroth-observation}
    Let $\gamma$ be a regular boundary parametrization and let
    $y:[0,2\pi]\to\HH$ satisfy
    \[
    y'
    =
    -\frac{ik}{2}
    \bigl(\gamma'S^*+\overline{\gamma'}S\bigr)y.
    \]
    If $
    y_0(\theta)=0$ for all $\theta$, then $
    y(\theta)=0$ for all $\theta$.
\end{lemma}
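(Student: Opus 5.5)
The plan is to read the transport equation componentwise and use the zeroth equation, together with regularity of $\gamma$, to propagate vanishing up the index one step at a time.

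Writing $y=(y_0,y_1,\ldots)$, the equation $y'=-\frac{ik}{2}(\gamma'S^*+\overline{\gamma'}S)y$ splits into
\[
y_0'=-\tfrac{ik}{2}\gamma' y_1,\qquad
y_n'=-\tfrac{ik}{2}\bigl(\gamma' y_{n+1}+\overline{\gamma'}y_{n-1}\bigr)\quad(n\ge1).
\]
These come from $(S^*\xi)_n=\xi_{n+1}$, $(S\xi)_0=0$ and $(S\xi)_n=\xi_{n-1}$ for $n\ge1$. Each coordinate map $\xi\mapsto\xi_n=\langle e_n,\xi\rangle$ is a bounded linear functional on $\HH$. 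Hence, if $y$ is differentiable as an $\HH$-valued map (in the sense in which the equation is posed, e.g.\ strongly, or in integrated form $y(\theta)=y(0)+\int_0^\theta C_s y(s)\,ds$), then each scalar function $y_n$ is differentiable. Its derivative is the $n$-th coordinate of $y'$, so the scalar equations above hold pointwise, or at least in integrated form.

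The core step is an induction on $n$ with hypothesis ``$y_0=\cdots=y_n\equiv0$''.

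\emph{Base case.} Since $y_0\equiv0$, we have $y_0'\equiv0$, so $\gamma' y_1\equiv0$. Regularity of $\gamma$ means $\gamma'(\theta)\ne0$ for every $\theta$, and therefore $y_1\equiv0$.

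\emph{Inductive step.} If $y_{n-1}\equiv0$ and $y_n\equiv0$, then the $n$-th equation gives $0=-\frac{ik}{2}\gamma' y_{n+1}$. Since $k>0$ and $\gamma'\neq0$, this forces $y_{n+1}\equiv0$.

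Thus every coordinate of $y(\theta)$ vanishes, so $y(\theta)=0$ in $\ell^2$ for each $\theta$.

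The only point needing care is that the componentwise equations are legitimate, i.e.\ that differentiation commutes with taking coordinates. I would handle this through the integrated form: pairing with $e_n$ gives $y_n(\theta)-y_n(0)=\int_0^\theta\langle e_n,C_s y(s)\rangle\,ds$ with a continuous integrand, and this integrand is exactly the scalar right-hand side above. Consequently $y_n$ is $C^1$ and the scalar equations hold everywhere, so the induction applies. I do not expect any genuine obstacle beyond this bookkeeping; notably, no uniqueness theorem for the (bounded-coefficient) ODE in $\HH$ is needed.
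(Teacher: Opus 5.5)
Your proposal is correct and is essentially the paper's proof: both read the equation componentwise, use $y_0\equiv0$ and $\gamma'\neq0$ to get $y_1\equiv0$, and then induct on the $n$th equation to force $y_{n+1}\equiv0$. Your remark that the coordinate functionals are bounded, so the scalar equations are legitimate, is harmless extra bookkeeping that the paper leaves implicit.
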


\begin{proof}
    Since $y_0=0$, the equation for the zeroth component gives
    \[
    0=y_0'
    =-\frac{ik}{2}\gamma' y_1.
    \]
    Because $\gamma'\ne0$, we obtain $y_1=0$.

    Suppose inductively that $y_0=\cdots=y_n=0$.
    Then the equation for the $n$th component reduces to
    \[
    0=y_n'
    =
    -\frac{ik}{2}\gamma' y_{n+1},
    \]
    and hence $y_{n+1}=0$.  Thus every component of $y$ vanishes.
\end{proof}

\subsection{Unitary boundary transport and monodromy}

We now consider the fundamental solution of the homogeneous Hilbert-space equation \eqref{d:eq:homogeneous-transport}:
\begin{equation}\label{d:eq:transport}
 \partial_\theta W_{k,\gamma}
    =
    C_\theta W_{k,\gamma},
    \qquad
    W_{k,\gamma}(0)=I.
\end{equation}
Since $C_\theta$ is bounded and depends continuously on $\theta$,
the operator-valued initial value problem \eqref{d:eq:transport}
has a unique solution on $\HH=\ell^2(\mathbb N_0)$.
For every $v\in\HH$, the function
\[
y(\theta)=W_{k,\gamma}(\theta)v
\]
is the unique solution of
\[
y'=C_\theta y,\qquad y(0)=v.
\]
Thus $W_{k,\gamma}(\theta)$ transports the initial datum at $\theta=0$ to the
point $\theta$ along the boundary.

Moreover, since $C_\theta^*=-C_\theta$, writing $W(\theta)=W_{k,\gamma}(\theta)$, we have
\[
\frac{d}{d\theta}\bigl(W(\theta)^*W(\theta)\bigr)
=
W(\theta)^*(C_\theta^*+C_\theta)W(\theta)
=0.
\]
Hence
\[
W(\theta)^*W(\theta)=I.
\]
The evolution operator of this bounded operator-valued differential equation is invertible: its inverse is obtained by solving the backward equation. Since $W(\theta)^*$ is already a left inverse, it follows that $W(\theta)^{-1}=W(\theta)^*$, and hence $W(\theta)$ is unitary for every $\theta$.
We therefore call $W$ the unitary boundary transport.
Its value after one circuit,
\[
V_{k,\gamma}=W_{k,\gamma}(2\pi),
\]
is the corresponding monodromy operator.  When no confusion can arise, we abbreviate $W_{k,\gamma}$ and $V_{k,\gamma}$ by $W$ and $V$.

For a Herglotz wave with boundary trace $h=u_a\circ\gamma$ and initial
derivative vector $v=y(0)\in\HH$, variation of constants gives
\begin{equation}\label{d:eq:variation-first}
y(\theta)
=
W_{k,\gamma}(\theta)v
-
\frac{ik}{2}W_{k,\gamma}(\theta)
\int_0^\theta
W_{k,\gamma}(s)^*e_0
\overline{\gamma'(s)}\,h(s)\,ds.
\end{equation}
Since its normalized derivative sequence is
single-valued along the closed boundary, we have
\[
y(2\pi)=y(0)=v.
\]
Evaluating the variation-of-constants formula at $\theta=2\pi$
therefore gives
\begin{equation}\label{d:eq:periodicity-first}
    (I-V_{k,\gamma})v
    =
    -\frac{ik}{2}V_{k,\gamma}
    \int_0^{2\pi}
    W_{k,\gamma}(s)^*e_0\overline{\gamma'(s)}h(s)\,ds,
    \qquad h=u_a\circ\gamma .
\end{equation}
Thus the difference $(I-V)v$ is determined entirely by the boundary trace $h$. In particular, if the Herglotz boundary trace is small, then $(I-V)v$ is small.

Equation \eqref{d:eq:periodicity-first} will later be combined with the Dirichlet-to-Neumann map. Near a Dirichlet resonance, its quadratic form diverges to $-\infty$ on a finite-dimensional space of boundary data. Density of Herglotz traces then allows us to relate this divergence to the monodromy. This mechanism gives unitary eigenvalues approaching the reference point $1$. The direction of approach is stated only after the oppositely oriented radial family $U_r$, unitarily equivalent to $V_r^*$, has been introduced; it is the $U_r$-phases whose arguments approach $2\pi$ from below.  For the radial family introduced in the next section, Lemma~\ref{d:lem:injective} will show that $1$ is not an eigenvalue of the monodromy at a nonresonant radius. Thus the point $1$ on the unit circle serves as the reference for detecting a Dirichlet resonance. At a nonresonant radius, $1$ is not an eigenvalue of $V_r$, although it remains in its spectrum because $V_r-I$ is compact on the infinite-dimensional space $\HH$.

We remark that the boundary point $\gamma(0)=z_0$ is only the initial point for the boundary transport equation.  An orientation-preserving reparametrization that fixes the starting point leaves $V_{k,\gamma}$ unchanged, while changing the starting point conjugates the monodromy by a unitary transport. Its spectrum
therefore does not depend on that choice.

\section{The positive radial generator and the area identity}
\label{d:sec:geometry}

\subsection{The rank-one commutator of the unilateral shift}

We have constructed a unitary boundary transport and its monodromy.
We now vary the domain and study the resulting motion of the monodromy
spectrum on the unit circle.  The key algebraic input is the
one-dimensional defect of the unilateral shift. We write $[A,B]=AB-BA$ for the commutator. The trace-class ideal is denoted by $\mathcal S_1$, with norm $\|A\|_{\mathcal S_1}=\operatorname{tr}|A|$. If $P_0$ denotes the
orthogonal projection onto $\mathbb C e_0$, then
\begin{equation}\label{d:eq:commutator}
    S^*S=I,\qquad
    SS^*=I-P_0,\qquad
    [S^*,S]=P_0.
\end{equation}
The origin of this defect is already visible in the boundary
recursion: because of the Dirichlet condition, the equation for
$f_0$ contains no term involving a preceding component, whereas for
$n\ge1$ the equation for $f_n$ couples both $f_{n+1}$ and $f_{n-1}$.
This missing backward term at $n=0$ is exactly what turns the
bilateral coupling pattern into a unilateral shift and produces the
rank-one commutator in \eqref{d:eq:commutator}.
Its relevance to the transport equation \eqref{d:eq:transport} becomes visible by writing the two real 
coordinate generators. Put
\[
X=-\frac{ik}{2}(S+S^*),\qquad
Y=\frac{k}{2}(S^*-S).
\]
For $\gamma=x+iy$, the equation \eqref{d:eq:driven-jet} can be rewritten as $C_\theta=x_\theta X+y_\theta Y$, and
\[
[X,Y]=-\frac{ik^2}{4}[S+S^*,S^*-S]
=\frac{ik^2}{2}P_0.
\]
The positivity and trace of this rank-one commutator are precisely
what will produce, respectively, the positivity of the radial generator
and the area coefficient in its trace.

 \subsection{Radial domain variation at fixed energy}
 For this stage assume $\Omega=F(\D)$, where $F$ is holomorphic and
 injective on a neighborhood of $\overline\D$.  This temporary
 assumption allows us to differentiate a smooth nested family of
 domains.  It will later be removed by conformal exhaustion in Subsection~\ref{d:sec:completion}.  Fix the energy $k^2>0$ throughout
 this subsection and set
 \[
 \Omega_r=F(r\D),
 \qquad
 \gamma_r(\theta)=F(re^{i\theta}),
 \qquad
 0\le r\le1.
 \]
 Write
 \[
 \gamma_r(\theta)=x(r,\theta)+iy(r,\theta).
 \]

 For each fixed $r$, the transport constructed in the previous section
 applied to the closed curve
 $\theta\mapsto\gamma_r(\theta)$ gives a unitary operator
 $W_r(\theta)$ satisfying
 \[
 \partial_\theta W_r(\theta)
 =
 C_\theta(r,\theta)W_r(\theta),
 \qquad
 W_r(0)=I,
 \]
 where  the transport coefficient
 \[
 C_\theta(r,\theta)
 =
 \partial_\theta x(r,\theta)\,X
 +
 \partial_\theta y(r,\theta)\,Y.\]
 Its value after one circuit, $V_r$,  is the monodromy associated with $\partial\Omega_r$.

 There is one point that must be taken into account before the
 monodromies $V_r$ can be differentiated with respect to $r$.
 Their starting point is not fixed: it moves along the path
 \[
 z_r:=\gamma_r(0)=F(r).
 \]
 Recall that the transport coefficient along a planar path depends
 linearly on the velocity of that path.  We therefore use the same
 transport rule along $r\mapsto z_r$.  Define
 \[
 B_r
 :=
 \partial_r x(r,0)\,X
 +
 \partial_r y(r,0)\,Y,
 \]
 and let $R_r$ be the corresponding transport:
 \begin{equation}\label{d:eq:basepoint-transport}
     \partial_rR_r=B_rR_r,
     \qquad
     R_0=I.
 \end{equation}
 Since $B_r^*=-B_r$, each $R_r$ is unitary.

 The operator $R_r$ defined by \eqref{d:eq:basepoint-transport} allows us to compare all the monodromies relative
 to the fixed reference point $z_0=F(0)$.  Indeed, starting with
 $v\in\HH$, we may first transport it from $z_0$ to $z_r$, then go
 once around $\partial\Omega_r$, and finally transport it back:
 \[
 v
 \xmapsto{\ R_r\ }
 R_rv
 \xmapsto{\ V_r\ }
 V_rR_rv
 \xmapsto{\ R_r^*\ }
 R_r^*V_rR_rv.
 \]
 This motivates
 \begin{equation}\label{d:eq:radial-based}
     H_r:=R_r^*V_rR_r.
 \end{equation}
 Since $R_r$ is unitary, $H_r$ and $V_r$ have exactly the same
 spectrum.  The advantage of $H_r$ is that its radial derivative no
 longer contains the artificial contribution coming from the motion
 of the starting point.

We now compute the derivative with respect to $r$ of the operator
$H_r$ defined in \eqref{d:eq:radial-based}. To this end, introduce
the coefficient associated with the radial velocity of the
two-parameter family $\gamma_r(\theta)$:
 \[
 D_r(\theta)
 :=
 \partial_r x(r,\theta)\,X
 +
 \partial_r y(r,\theta)\,Y.
 \]
 Thus $D_r(0)=D_r(2\pi)=B_r$.

 \begin{lemma}
     \label{d:lem:radial-variation}
     For $r>0$,
     \begin{equation}\label{d:eq:radial-Q}
         H_r^*\partial_rH_r=-iQ_r,
     \end{equation}
     where
     \begin{equation}\label{d:eq:Qr}
         Q_r=
         \frac{k^2}{2}R_r^*
         \left(
         \int_0^{2\pi}
         J(r,\theta)\,
         W_r(\theta)^*P_0W_r(\theta)\,d\theta
         \right)R_r
     \end{equation}
     and
     \begin{equation}\label{d:eq:Jacobian}
         J(r,\theta)
         =
         \partial_r x\,\partial_\theta y
         -
         \partial_\theta x\,\partial_r y
         =
         r|F'(re^{i\theta})|^2.
     \end{equation}
     In particular, $Q_r$ is a positive trace-class operator.
 \end{lemma}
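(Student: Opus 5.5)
The plan is to differentiate $V_r=W_r(2\pi)$ with respect to $r$ by the standard variation formula for evolution families, and then absorb the basepoint motion using $R_r$. Set $K_r(\theta):=W_r(\theta)^*\partial_rW_r(\theta)$. Since $\partial_\theta W_r=C_\theta W_r$, differentiating in $r$ gives
\[
\partial_\theta(\partial_rW_r)=C_\theta\,\partial_rW_r+(\partial_rC_\theta)W_r .
\]
Because $W_r(0)=I$ for all $r$, variation of constants yields
\[
K_r(\theta)=\int_0^\theta W_r(s)^*\,\partial_rC_\theta(r,s)\,W_r(s)\,ds.
\]
Here $C_\theta=x_\theta X+y_\theta Y$, so $\partial_rC_\theta=x_{r\theta}X+y_{r\theta}Y=\partial_\theta D_r$ by equality of mixed partials. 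The key trick is to integrate by parts in $s$:
\[
W^*(\partial_sD_r)W=\partial_s(W^*D_rW)-W^*[D_r,C_\theta]W,
\]
where the second term comes from $\partial_sW^*=-W^*C_\theta$ and $\partial_sW=C_\theta W$.

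Next, the commutator $[D_r,C_\theta]$ is computed from bilinearity:
\[
[x_rX+y_rY,\;x_\theta X+y_\theta Y]=(x_ry_\theta-y_rx_\theta)[X,Y]=J\,\frac{ik^2}{2}P_0 .
\]
This uses $[X,Y]=\tfrac{ik^2}{2}P_0$ from Subsection 3.1. At $\theta=2\pi$, writing $V=V_r$ and $D_r(0)=D_r(2\pi)=B_r$, we therefore get
\[
V^*\partial_rV=V^*B_rV-B_r-\frac{ik^2}{2}\int_0^{2\pi}J\,W^*P_0W\,d\theta .
\]
Then I would compute $H_r^*\partial_rH_r$ with $H=R^*VR$ and $\partial_rR=BR$ (so $\partial_rR^*=-R^*B$). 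This gives
\[
\partial_rH=R^*\bigl(-BV+\partial_rV+VB\bigr)R ,
\]
\[
H^*\partial_rH=R^*\bigl(-V^*BV+V^*\partial_rV+B\bigr)R .
\]
Substituting the formula for $V^*\partial_rV$, the boundary terms $V^*BV$ and $B$ cancel exactly. This leaves $-iQ_r$ with $Q_r$ as in \eqref{d:eq:Qr}.

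The Jacobian identity \eqref{d:eq:Jacobian} follows from the Cauchy–Riemann equations. We have $\partial_r\gamma=e^{i\theta}F'$ and $\partial_\theta\gamma=ire^{i\theta}F'$, so $\partial_\theta\gamma=ir\,\partial_r\gamma$. The determinant is $\operatorname{Im}(\overline{\partial_r\gamma}\,\partial_\theta\gamma)=r|F'|^2$.

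For positivity and trace class, note that each $W^*P_0W$ is a rank-one projection onto $W^*e_0$, with trace norm $1$. The map $\theta\mapsto W_r(\theta)^*e_0$ is norm-continuous, so $\theta\mapsto W^*P_0W$ is continuous into $\mathcal S_1$. Hence the integral converges as a Bochner integral in $\mathcal S_1$, with
\[
\operatorname{tr}Q_r=\frac{k^2}{2}\int_0^{2\pi}J\,d\theta .
\]
Since $J\ge0$, we have $\langle v,Q_rv\rangle=\frac{k^2}{2}\int J\,|\langle W e_0\ldots\rangle|^2\ge0$, which gives positivity.

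The main obstacle I expect is justifying the $r$-differentiation of the operator-valued ODE in norm. Since $C_\theta(r,\theta)$ is real-analytic in $(r,\theta)$ and the generators $X,Y$ are bounded, the standard Dyson-series or Grönwall argument gives norm-differentiability of $W_r(\theta)$ in $r$, uniformly in $\theta$. That justifies the variation formula and the integration by parts. One also has to track carefully the sign conventions for $\partial_rR^*$.
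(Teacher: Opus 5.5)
Your proposal is correct and follows essentially the same route as the paper. Your Duhamel formula for $W_r^*\partial_rW_r$, followed by integration by parts against $\partial_\theta D_r$, is the integrated form of the paper's identity $\partial_\theta\bigl(W_r^*\partial_rW_r-W_r^*D_rW_r\bigr)=W_r^*[C_\theta,D_r]W_r$; the remaining steps (the commutator $[X,Y]=\frac{ik^2}{2}P_0$, the formula for $V_r^*\partial_rV_r$, and the cancellation of $V_r^*B_rV_r-B_r$ against the derivatives of $R_r^*$ and $R_r$) match the paper, and your remarks on norm-differentiability in $r$ and the $\mathcal S_1$-valued Bochner integral supply details the paper leaves implicit.
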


 \begin{proof}
     Since mixed derivatives commute,
     \[
     \partial_rC_\theta=\partial_\theta D_r.
     \]
     Differentiating the boundary transport equation $
     \partial_\theta W_r=C_\theta W_r$
     with respect to $r$ gives
     \begin{align*}
         \partial_\theta
         \left(
         W_r^*\partial_rW_r-W_r^*D_rW_r
         \right) =
         W_r^*
         \left(
         \partial_rC_\theta-\partial_\theta D_r
         +[C_\theta,D_r]
         \right)
         W_r  =
         W_r^*[C_\theta,D_r]W_r.
     \end{align*}
     Using $
     [X,Y]=\frac{ik^2}{2}P_0$
     we obtain
     \[
     [C_\theta,D_r]
     =
     -\frac{ik^2}{2}
     \left(
     \partial_r x\,\partial_\theta y
     -
     \partial_\theta x\,\partial_r y
     \right)P_0
     =
     -\frac{ik^2}{2}J(r,\theta)P_0.
     \]
     Hence
     \begin{equation}\label{d:eq:variation-W}
         \partial_\theta
         \left(
         W_r^*\partial_rW_r-W_r^*D_rW_r
         \right)
         =
         -\frac{ik^2}{2}
         J(r,\theta)W_r^*P_0W_r.
     \end{equation}

     At $\theta=0$, we have $W_r(0)=I$ and
     $\partial_rW_r(0)=0$, while $D_r(0)=B_r$.  At
     $\theta=2\pi$, we have $W_r(2\pi)=V_r$ and
     $D_r(2\pi)=B_r$.  Integrating
     \eqref{d:eq:variation-W} therefore gives
     \begin{equation}\label{d:eq:variation-V}
         V_r^*\partial_rV_r
         =
         V_r^*B_rV_r-B_r
         -
         \frac{ik^2}{2}
         \int_0^{2\pi}
         J(r,\theta)W_r(\theta)^*P_0W_r(\theta)\,d\theta.
     \end{equation}

     Now differentiate $H_r=R_r^*V_rR_r$. Since
     \[
     \partial_rR_r=B_rR_r,
     \qquad
     \partial_rR_r^*=-R_r^*B_r,
     \]
     we find
     \[
     H_r^*\partial_rH_r
     =
     R_r^*
     \left(
     V_r^*\partial_rV_r
     -
     V_r^*B_rV_r+B_r
     \right)
     R_r.
     \]
     The first two terms on the right-hand side of
     \eqref{d:eq:variation-V} are therefore cancelled exactly by the
     derivatives of the two conjugating factors.  What remains is
     \eqref{d:eq:radial-Q}--\eqref{d:eq:Qr}.

     Finally,
     \[
     J(r,\theta)=r|F'(re^{i\theta})|^2>0
     \qquad (r>0),
     \]
     and $P_0\ge0$.  Since unitary conjugation preserves positivity,
     \eqref{d:eq:Qr} shows that $Q_r\ge0$.  Its integrand has rank one,
     so $Q_r\in\mathcal S_1$.
 \end{proof}

 As a consequence, we see
 \[
 -iH_r^*\partial_rH_r=-Q_r\le0.
 \]
So the eigenphases of $H_r$ move in the negative direction as $r$
 increases.  For the counting argument it is more convenient to use
 the opposite orientation.  We therefore define
 \begin{equation}\label{d:eq:Ur-definition}
     U_r:=H_r^*=R_r^*V_r^*R_r.
 \end{equation}
 We also need the instantaneous radial generator of the unitary path
 $r\mapsto U_r$, defined by
 \begin{equation}\label{d:eq:Lr-definition}
     L_{r,k}:=-iU_r^*\partial_rU_r.
 \end{equation}
 It is self-adjoint, since $U_r$ is unitary, and equivalently
 \[
 \partial_rU_r=iU_rL_{r,k}.
 \]
 Thus $L_{r,k}$ from \eqref{d:eq:Lr-definition} governs the infinitesimal motion of the unitary
 spectrum as the domain expands.
 Equation \eqref{d:eq:radial-Q} gives
 \begin{equation}\label{d:eq:Lr-Qr}
     L_{r,k}=H_rQ_rH_r^*\ge0.
 \end{equation}
 Thus the phases of $U_r$ move in the positive direction as the
 domain expands.

 The operator $Q_r$ is trace class because its integrand in
 \eqref{d:eq:Qr} is a scalar multiple of the rank-one operator
 $W_r^*P_0W_r$.  The standard trace-ideal operations used below are
 described in \cite{Simon}.

 \begin{proposition}
     \label{d:prop:radial}
     For $r>0$, the operator $L_{r,k}$ is positive, injective, and trace
     class.  Moreover,
     \[
     U_r-I,\qquad V_r-I\in\mathcal S_1.
     \]
     For every $0<b\le1$,
     \begin{equation}\label{d:eq:radial-area}
         \int_0^b\tr L_{r,k}\,dr
         =
         \frac{k^2}{2}|F(b\D)|.
     \end{equation}
     The map $(r,k)\mapsto L_{r,k}$ is jointly continuous in trace norm,
     including at $r=0$, and $r\mapsto U_r$ is locally real analytic in
     operator norm for $r>0$.
 \end{proposition}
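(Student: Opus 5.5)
Positivity and the trace-class property come straight from Lemma~\ref{d:lem:radial-variation} and \eqref{d:eq:Lr-Qr}: $L_{r,k}=H_rQ_rH_r^*$ is a unitary conjugate of $Q_r\ge0$, $Q_r\in\mathcal S_1$. For the trace identity, the integrand $W_r(\theta)^*P_0W_r(\theta)$ in \eqref{d:eq:Qr} is a rank-one projection of trace $1$, and the integral converges in $\mathcal S_1$ since $\theta\mapsto W_r(\theta)$ is norm continuous. Cyclicity under the unitary conjugations then gives
\[
\tr L_{r,k}=\tr Q_r=\frac{k^2}{2}\int_0^{2\pi}J(r,\theta)\,d\theta
=\frac{k^2}{2}\int_0^{2\pi}r|F'(re^{i\theta})|^2\,d\theta .
\]
Integrating in $r$ from $0$ to $b$ gives $\frac{k^2}{2}\int_{b\D}|F'|^2\,dA=\frac{k^2}{2}|F(b\D)|$, by the change of variables for the injective conformal map $F$. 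This is \eqref{d:eq:radial-area}.

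\emph{Injectivity.} Let $L_{r,k}v=0$ and put $w=H_r^*v$. Then $\langle w,Q_rw\rangle=0$, and with $\xi=R_rw$ this reads
\[
\int_0^{2\pi} J(r,\theta)\,|\langle e_0,W_r(\theta)\xi\rangle|^2\,d\theta=0 .
\]
Since $J>0$ for $r>0$ and the integrand is continuous in $\theta$, the function $y(\theta)=W_r(\theta)\xi$ solves the homogeneous transport equation and has $y_0\equiv0$. Lemma~\ref{d:lem:zeroth-observation} then gives $y\equiv0$, so $\xi=0$, hence $w=0$ and $v=0$.

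\emph{$U_r-I,\ V_r-I\in\mathcal S_1$.} At $r=0$ the curve $\gamma_0$ is constant, so $C_\theta\equiv0$ and $V_0=I$. Also $U_0=R_0^*V_0^*R_0=I$. The formula $\partial_rU_r=iU_rL_{r,k}$ gives
\[
U_b-I=i\int_0^bU_rL_{r,k}\,dr ,
\]
a Bochner integral in $\mathcal S_1$, because $\|U_rL_{r,k}\|_{\mathcal S_1}=\tr L_{r,k}$ is integrable by the area identity. Therefore $U_b-I\in\mathcal S_1$. Then $V_b^*-I=R_b(U_b-I)R_b^*\in\mathcal S_1$, so $V_b-I\in\mathcal S_1$ as well. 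This step also requires $r\mapsto U_r$ to be differentiable down to $r=0$, which is part of the regularity claim below.

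\emph{Regularity, the main obstacle.} I would show that
\[
(r,k,\theta)\mapsto C_\theta,\ D_r,\ B_r
\]
are real-analytic operator-valued functions, since they are linear in $\partial_\theta\gamma_r$, $\partial_r\gamma_r$ with coefficients $kX$, $kY$. For $F$ holomorphic on a neighborhood of $\overline\D$ the map $\gamma_r(\theta)=F(re^{i\theta})$ is analytic in $(r,\theta)$, including across $r=0$. Analytic dependence of solutions of bounded linear ODEs on parameters then makes $W_r(\theta)$, $R_r$, $V_r$, $H_r$ and $U_r$ analytic in operator norm for $r$ near $[0,1]$; this gives the local real analyticity of $r\mapsto U_r$.

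For trace-norm continuity of $L_{r,k}$, use the representation through \eqref{d:eq:Qr}. Its integrand is
\[
J(r,\theta)\,(W_r^*e_0)\otimes(W_r^*e_0)^* ,
\]
and its $\mathcal S_1$-norm is bounded by $|J|\,\|W_r^*e_0-\cdot\|$-type differences. Since $W_r(\theta)$ depends norm-continuously on $(r,k,\theta)$, the rank-one operators vary continuously in $\mathcal S_1$, uniformly in $\theta$. Hence $Q_r$ is jointly $\mathcal S_1$-continuous in $(r,k)$, and conjugating by the norm-continuous unitaries $H_r$ preserves this. At $r=0$ the weight $J=r|F'|^2$ extends continuously by $0$, so $L_{r,k}\to0$ in $\mathcal S_1$. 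I expect the care to lie in checking that the formula \eqref{d:eq:radial-Q}, derived for $r>0$, extends continuously to $r=0$ with $L_{0,k}=0$.
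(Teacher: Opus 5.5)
Your proposal is correct and follows essentially the same route as the paper: positivity and trace class come from $L_{r,k}=H_rQ_rH_r^*$, injectivity from Lemma~\ref{d:lem:zeroth-observation}, and the area identity from the trace-one rank-one integrand together with the conformal Jacobian. As in the paper, $U_r-I\in\mathcal S_1$ follows by integrating $\partial_rU_r=iU_rL_{r,k}$ in trace norm from $U_0=I$, and trace-norm continuity comes from the $\mathcal S_1$-valued integral representation of $Q_r$. The only differences are cosmetic: you get real analyticity of the transports across $r=0$, whereas the paper needs only continuity there, and you prove continuity of $Q_r$ via uniform $\mathcal S_1$-continuity of the rank-one integrand rather than dominated convergence.
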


 \begin{proof}
By \eqref{d:eq:Lr-Qr}, $L_{r,k}$ is already known to be positive and
     trace class.  We now prove the remaining assertions of the proposition.

     We first prove injectivity.  Since $L_{r,k}$ is unitarily equivalent
     to $Q_r$, it is enough to show that $Q_r$ has trivial kernel.
     Suppose $\langle v,Q_rv\rangle=0$. By \eqref{d:eq:Qr},
     \[
     0
     =
     \frac{k^2}{2}
     \int_0^{2\pi}
     J(r,\theta)
     \left|
     \left\langle
     e_0,W_r(\theta)R_rv
     \right\rangle
     \right|^2d\theta.
     \]
     Because $J(r,\theta)>0$, continuity implies $
     \left\langle
     e_0,W_r(\theta)R_rv
     \right\rangle=0$
      for every $\theta$. Thus if we set $
     y(\theta):=W_r(\theta)R_rv$,
     then $y$ is an $\HH$-valued solution of the homogeneous transport
     equation $y'(\theta)=C_\theta y(\theta)$.
     Its zeroth component vanishes identically.   Lemma~\ref{d:lem:zeroth-observation} therefore
     gives $y\equiv0$.  Since $R_r$ is unitary, $v=0$.

     We next compute the trace.  By
     \eqref{d:eq:Lr-Qr}, unitary invariance of the trace, and
     $\tr(W_r^*P_0W_r)=\tr P_0=1$,
     \begin{align}
         \tr L_{r,k}
          =
         \tr Q_r  =
         \frac{k^2}{2}
         \int_0^{2\pi}
         J(r,\theta)\,d\theta  =
         \frac{k^2}{2}
         \int_0^{2\pi}
         r|F'(re^{i\theta})|^2\,d\theta.
         \label{d:eq:radial-trace}
     \end{align}
     By \eqref{d:eq:Jacobian}, the last integrand is the Jacobian of the
conformal map in polar coordinates. Therefore
     \[
     \int_0^b\!\int_0^{2\pi}
     r|F'(re^{i\theta})|^2\,d\theta\,dr
     =
     \int_{b\D}|F'(z)|^2\,dA(z)
     =
     |F(b\D)|.
     \]
     Integrating \eqref{d:eq:radial-trace} proves
     \eqref{d:eq:radial-area}.

For completeness, the continuity here is in the trace norm.  On compact parameter sets, the coefficients of the transport equations, and hence $W_r$ and $R_r$, depend uniformly continuously on $(r,k)$ in operator norm. More precisely, the trace-class-valued integrand in \eqref{d:eq:Qr}, after including the unitary conjugation by $R_r$, satisfies
\[
\left\|
\frac{k^2}{2}J(r,\theta)R_r^*W_r(\theta)^*P_0W_r(\theta)R_r
\right\|_{\mathcal S_1}
=
\frac{k^2}{2}J(r,\theta).
\]
On every compact $(r,k)$-parameter set these scalar trace norms admit an $L^1(d\theta)$ dominator. Hence the integral is a Bochner integral with values in $\mathcal S_1$, and dominated convergence gives joint trace-norm continuity of $Q_r$. Together with the operator-norm continuity of $H_r$, the identity $L_{r,k}=H_rQ_rH_r^*$ then gives joint trace-norm continuity of $L_{r,k}$. At $r=0$ this follows directly from
     the factor
     \[
     J(r,\theta)=r|F'(re^{i\theta})|^2.
     \]
     Since $\gamma_0$ is constant,
     \[
     V_0=H_0=U_0=I.
     \]
     From the definition of $L_{r,k}$, $\partial_rU_r=iU_rL_{r,k}$,
     and integration in trace norm therefore gives
     \[
     U_r-I\in\mathcal S_1.
     \]
     Consequently $H_r-I=U_r^*-I\in\mathcal S_1$.
     Since $V_r=R_rH_rR_r^*$,
     unitary conjugation gives
     \[
     V_r-I\in\mathcal S_1.
     \]

     Finally, because $F$ is holomorphic on a neighborhood of
     $\overline\D$, the coefficients of the transport equations depend
     real analytically on $r$ for $r>0$.  Their Picard series therefore
     give local operator-norm analyticity of $W_r$, $R_r$, and hence
     $U_r$.
 \end{proof}

\subsection{Strict positivity of the spectral tail}

The area identity \eqref{d:eq:radial-area} controls the full trace of the positive operator
$L_{r,k}$.  For the strict P\'olya inequality, however, it is important
not to use the whole trace in counting the first $m$ spectral
crossings.

For a positive trace-class operator $A$, let $\Phi_m(A)$
denote the sum of its $m$ largest eigenvalues, counted with
multiplicity, and define the remaining spectral tail by
\begin{equation}\label{d:eq:spectral-tail}
\tau_m(A):=\tr A-\Phi_m(A).
\end{equation}
Since $L_{r,k}$ is positive, compact, and injective on the
infinite-dimensional space $\HH$, it has infinitely many positive
eigenvalues.  Hence, for every $r>0$ and every finite $m$,
\begin{equation}\label{d:eq:positive-tail}
    \tau_m(L_{r,k})>0.
\end{equation}

In the counting argument, the Ky Fan sum $\Phi_m(L_{r,k})$ bounds the motion of the selected eigenphases, while $\tau_m(L_{r,k})$ remains strictly positive. The resulting quantitative estimate is proved in Proposition~\ref{d:prop:remainder}.

It remains to establish two links: an actual Dirichlet resonance must produce unitary phases approaching the reference point $1$ with the correct multiplicity, and the positivity of $L_{r,k}$ must then be converted into a quantitative lower bound for their total phase motion. Sections~\ref{d:sec:dtn-pole} and~\ref{d:sec:dirichlet-branch} establish the first link, while Section~\ref{d:sec:crossing-count} establishes the second.

\section{Dirichlet resonances and the Dirichlet-to-Neumann map}
\label{d:sec:dtn-pole}

\subsection{Negative divergence at Dirichlet resonances}
 Recall from Section~\ref{d:sec:transport} that the Hilbert-space transport is realized on Herglotz waves, whereas the actual Dirichlet eigenspace enters through boundary spectral data. We first show that, as a resonance is approached from below, the Dirichlet-to-Neumann quadratic form diverges to $-\infty$ on a space of boundary data whose dimension equals the resonance multiplicity. Section~\ref{d:sec:dirichlet-branch} then relates this divergence to eigenphases of the unitary family $U_r$ using Herglotz approximation and the Cayley transform.

We continue with $
\Omega_r=F(r\D),
\gamma_r(\theta)=F(re^{i\theta})$,
and keep the energy $k^2>0$ fixed.  We call $r$ \emph{nonresonant} if
\[
k^2\notin\sigma(-\Delta_{\Omega_r}^D).
\]
For a nonresonant radius $r$, the Dirichlet problem
\[
(\Delta+k^2)u=0\quad\text{in }\Omega_r,
\qquad
u\circ\gamma_r=h,
\]
 is uniquely solvable for
every $h\in H^{1/2}(\T)$, and the corresponding Dirichlet-to-Neumann
map  $
\DD_r:H^{1/2}(\T)\longrightarrow H^{-1/2}(\T)$ defined by
\[
\DD_rh
=
|\gamma_r'|\,
\partial_\nu u\circ\gamma_r
\]
is bounded. The calligraphic symbol $\DD_r$ denotes the Dirichlet-to-Neumann map and should not be confused with the radial transport coefficient $D_r(\theta)$ used in Section~\ref{d:sec:geometry}. Here the factor $|\gamma_r'|$ expresses the normal derivative as a density
with respect to $d\theta$.
Moreover, Green's identity gives
\[
\langle h_1,\DD_r h_2\rangle
=
\overline{\langle h_2,\DD_r h_1\rangle},
\qquad
h_1,h_2\in H^{1/2}(\T),
\]
where the brackets denote the natural
$H^{1/2}$--$H^{-1/2}$ boundary duality.
See \cite{LM,McLean}.

Under the conformal change of variables $\widetilde u(z)=u(F(rz))$,
the Helmholtz equation on $\Omega_r$ becomes
\[
\bigl(-\Delta-k^2w_r\bigr)\widetilde u=0
\qquad\text{in }\D,
\]
where
\[
w_r(z)=r^2|F'(rz)|^2.
\]
Thus the pulled-back differential operator on the fixed disk is the Schr\"odinger operator 
\begin{equation*}
    \mathscr A_r=-\Delta-k^2w_r.
\end{equation*}
Let $A_r$ denote the Dirichlet realization of $\mathscr A_r$ on
$L^2(\D)$, with domain $\Dom A_r=H^2(\D)\cap H_0^1(\D)$. 
Since $F(re^{i\theta})=\gamma_r(\theta)$,
we have $
\widetilde u(e^{i\theta})
=
u(\gamma_r(\theta))=h(\theta) $ and thus
\[
\partial_{\nu_{\D}}\widetilde u(e^{i\theta})
=r|F'(re^{i\theta})| \partial_{\nu_{\Omega_r}}u(\gamma_r(\theta))
=
|\gamma_r'(\theta)|
\partial_{\nu_{\Omega_r}}u(\gamma_r(\theta)).
\]
Therefore our normalization of $\DD_r$ is chosen precisely so that,
after pullback to $\D$,
\[
\DD_r h
=
\partial_{\nu_{\D}}\widetilde u|_{\T}.
\]
Thus $\DD_r$ is exactly the Dirichlet-to-Neumann map on $\T$
associated with the pulled-back operator
$\mathscr A_r=-\Delta-k^2w_r$.

The point of passing to the fixed disk is that, for a resonant radius $r_*$,  a Dirichlet eigenvalue
of $\Omega_{r_*}$ at energy $k^2$ becomes a zero eigenvalue of
$A_{r_*}$.  As $r \to r_*-$ from below, the corresponding
eigenvalues of $A_r$ approach zero from the positive side.  The inverse
$A_r^{-1}$ therefore becomes large on precisely this finite-dimensional
spectral subspace.  The next lemma shows that this singularity is
visible on the boundary, with the correct sign and multiplicity.

\begin{lemma}
    \label{d:lem:pole}
    Suppose that $k^2$ is a Dirichlet eigenvalue of $\Omega_{r_*}$ of
    multiplicity $m$, where $r_*>0$.  Then there exist linearly independent
    functions $
    h_1,\ldots,h_m\in C^\infty(\T)$
    and numbers $M_r\to+\infty$ as $r \to r_*-$ such that, for every
    sufficiently close nonresonant $r<r_*$, and for every $c\in\mathbb C^m$, the function $h_c=\sum_{j=1}^m c_jh_j$ satisfies
    \begin{equation}\label{d:eq:pole}
        \langle h_c,\DD_rh_c\rangle
        \le -M_r|c|^2.
    \end{equation}
\end{lemma}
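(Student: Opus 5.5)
Everything will be pulled back to the fixed disk and expressed through $A_r$. For $h\in C^\infty(\T)$, fix a smooth extension $Eh\in C^\infty(\overline\D)$; for definiteness take the harmonic extension. For nonresonant $r$, the solution of $\mathscr A_r\widetilde u=0$ with trace $h$ is
\[
\widetilde u_r=Eh-A_r^{-1}\mathscr A_rEh=Eh+k^2A_r^{-1}(w_rEh),
\]
using $\Delta Eh=0$. Green's identity then gives
\[
\langle h,\DD_rh\rangle=\int_\D\bigl(|\nabla\widetilde u_r|^2-k^2w_r|\widetilde u_r|^2\bigr),
\]
and since $\widetilde u_r-Eh\in H_0^1$, this equals
\[
q_r[Eh]-k^4\langle w_rEh,A_r^{-1}(w_rEh)\rangle ,
\qquad q_r[\phi]=\int_\D|\nabla\phi|^2-k^2w_r|\phi|^2 .
\]
This is the standard Schur-complement formula. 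The first term is bounded uniformly in $r$ near $r_*$ for fixed $h$, because $w_r$ depends continuously on $r$. The whole sign information therefore sits in the resolvent term.

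The next step is to choose the boundary data. Let $\psi_1,\dots,\psi_m$ be an orthonormal basis of $\ker A_{r_*}$; these are the pulled-back Dirichlet eigenfunctions of $\Omega_{r_*}$. By analytic perturbation theory, $A_r$ is an analytic family in $r$ with fixed domain $H^2\cap H_0^1$. Strict domain monotonicity, or equivalently $\partial_r w_r>0$ in the sense that the Hellmann--Feynman derivative $-k^2\langle\psi,\partial_rw_r\psi\rangle<0$ (this follows from $\partial_r(r^2|F'(rz)|^2)$ integrated against $|\psi|^2$ via the Hadamard formula), shows that for $r<r_*$ close to $r_*$ there are exactly $m$ eigenvalues $\epsilon_1(r),\dots,\epsilon_m(r)>0$ tending to $0$. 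The rest of $\sigma(A_r)$ stays at distance $\ge\delta>0$ from $0$. Let $P_r$ be the corresponding spectral projection. Then
\[
k^4\langle g,A_r^{-1}g\rangle\ \ge\ \frac{k^4}{\max_j\epsilon_j(r)}\|P_rg\|^2-\frac{k^4}{\delta}\|g\|^2,
\qquad g=w_rEh .
\]
We need $h_1,\dots,h_m$ such that the map $c\mapsto P_{r_*}(w_{r_*}Eh_c)$ is injective, i.e.\ the $m\times m$ matrix $G_{ij}=\langle\psi_i,w_{r_*}Eh_j\rangle$ is invertible.

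Green's identity computes $G$. Since $A_{r_*}\psi_i=0$, $\Delta Eh_j=0$ and $\psi_i|_\T=0$,
\[
k^2\langle\psi_i,w_{r_*}Eh_j\rangle=\int_\D\overline{(-\Delta\psi_i)}\,Eh_j=\int_\T\overline{\partial_\nu\psi_i}\,h_j .
\]
The Neumann traces $\partial_\nu\psi_i$ are linearly independent, by unique continuation (Holmgren): a combination with zero Cauchy data vanishes. Choosing $h_j$ to approximate $\partial_\nu\psi_j$ in $L^2(\T)$ by trigonometric polynomials therefore makes $G$ close to a Gram matrix, hence invertible, and $G$ invertible forces $h_1,\dots,h_m$ to be linearly independent.

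Continuity of $P_r$ and $w_r$ then gives $\|P_r(w_rEh_c)\|\ge\beta|c|$ for $r$ near $r_*$. All bounded terms are $\le C|c|^2$, so
\[
\langle h_c,\DD_rh_c\rangle\le -\bigl(k^4\beta^2/\max_j\epsilon_j(r)-C'\bigr)|c|^2=:-M_r|c|^2,
\]
and $M_r\to+\infty$.

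The main obstacle is the spectral step: showing that the $m$ eigenvalues approach $0$ strictly from above, with no eigenvalue crossing zero from below, and uniformly separating the rest of the spectrum. I would handle it using the monotonicity of Dirichlet eigenvalues of $\Omega_r$ in $r$. Continuity of $\lambda_j(\Omega_r)$, together with strict monotonicity, gives that for $r<r_*$ near $r_*$ exactly $m$ eigenvalues of $-\Delta_{\Omega_r}$ lie slightly above $k^2$. Zero eigenvalues of $A_r$ correspond to these. Positivity of the $A_r$-eigenvalues near $0$ then follows by a min--max/Sylvester-inertia comparison between the quadratic forms $\int|\nabla\phi|^2-k^2w_r|\phi|^2$ and the Rayleigh quotients on $\Omega_r$. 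I would check this inertia argument carefully, since only the sign and count of the small eigenvalues, not their exact values, are needed.
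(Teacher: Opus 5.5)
Your proposal is correct and follows essentially the paper's proof. The shared steps are the Schur-complement formula for $\langle h,\DD_rh\rangle$ through $A_r^{-1}$, the Riesz cluster of $A_r$ near $0$ (whose positivity for $r<r_*$ comes from the Morse-index count $n_-(A_r)=\#\{j:\lambda_j(\Omega_r)<k^2\}$ you sketch at the end), and the coupling of the boundary data to $\ker A_{r_*}$ through the Gram matrix of the normal derivatives $\partial_\nu\psi_j$ via Green's identity.

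Three minor points:
\begin{itemize}
\item $\partial_\nu\psi_j$ is already in $C^\infty(\T)$, so the trigonometric approximation is unnecessary.
\item Your Green identity should read $k^2\langle\psi_i,w_{r_*}Eh_j\rangle=-\int_\T\overline{\partial_\nu\psi_i}\,h_j$; the sign is harmless, since only invertibility of $G$ is used.
\item In your alternative Hellmann--Feynman route, $\partial_rw_r$ is not pointwise positive in general. Only its compression to $\ker A_{r_*}$ is positive definite, by the Rellich--Hadamard identity with boundary normal velocity $|F'|>0$, so that step needs this integrated identity rather than ``$\partial_rw_r>0$''.
\end{itemize}
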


\begin{proof}
    We separate the argument into four steps.

    \medskip
    \noindent
    \emph{Step 1: locate the spectral cluster approaching zero.}
    The Dirichlet eigenvalues
    \[
    r\longmapsto\lambda_j(\Omega_r)
    \]
    are continuous and strictly decreasing for $r>0$ by the standard Dirichlet domain-monotonicity and continuity theory; see, for example, \cite{Davies}.  We use these classical facts here and again in the final conformal-exhaustion argument without reproving them.

    By conformal invariance of the Dirichlet energy, the Dirichlet eigenvalues of $\Omega_r$ admit the weighted min--max
    characterization
    \[
    \lambda_j(\Omega_r)
    =
    \min_{\substack{E\subset H_0^1(\D)\\ \dim E=j}}
    \max_{0\ne v\in E}
    \frac{\int_\D |\nabla v|^2}
    {\int_\D w_r|v|^2}.
    \]
    On the other hand, the quadratic form of $
    A_r=-\Delta-k^2w_r$
    is
    \[
    q_r[v]
    =
    \int_\D |\nabla v|^2
    -
    k^2\int_\D w_r|v|^2.
    \]
    Hence $q_r[v]<0$ precisely when the corresponding weighted Rayleigh
    quotient is smaller than $k^2$.  The min--max principle therefore gives
    \[
    \#\{\text{negative eigenvalues of }A_r\}
    =
    \#\{j:\lambda_j(\Omega_r)<k^2\},
    \]
    and similarly
    \[
    \dim\ker A_r
    =
    \dim\ker(-\Delta_{\Omega_r}^D-k^2).
    \]
    Thus $\dim\ker A_{r_*}=m$.

    The potentials $w_r$ depend continuously on $r$ in
    $L^\infty(\D)$.  Since
    \[
    A_r-A_{r_*}=-k^2(w_r-w_{r_*})
    \]
    is therefore a small bounded perturbation of $A_{r_*}$, the resolvent
    identity implies norm-resolvent continuity of the family $A_r$.
    At $r=r_*$, the point $0$ is an isolated eigenvalue of $A_{r_*}$ of
    multiplicity $m$.  Choose a small positively oriented circle
    $\Gamma$ around $0$ which contains no other point of
    $\sigma(A_{r_*})$.  For $r$ sufficiently close to $r_*$, the same
    circle lies in the resolvent set of $A_r$, and we define the Riesz
    projection
    \[
    P_r
    =
    \frac{1}{2\pi i}
    \int_\Gamma (z-A_r)^{-1}\,dz .
    \]
    At $r=r_*$ this projection is the orthogonal projection $
    P_*:\ L^2(\D)\longrightarrow\ker A_{r_*}$.
    Norm-resolvent continuity gives
    \[
    \|P_r-P_*\|\longrightarrow0
    \qquad(r\to r_*).
    \]
    In particular, for $r$ sufficiently close to $r_*$,
    \[
    \rank P_r=\rank P_*=m.
    \]
    Thus $\ran P_r$ is precisely the $m$-dimensional spectral subspace of
    $A_r$ corresponding to the eigenvalues which remain close to zero.

   Strict monotonicity determines the sign of the small cluster through the Morse index, without identifying individual eigenvalues of $A_r$ with weighted Dirichlet eigenvalue branches. For a self-adjoint operator with discrete spectrum
bounded below, let $n_-(A)$ denote the number of its
negative eigenvalues, counted with multiplicity. Let
     \[
     q:=\#\{j:\lambda_j(\Omega_{r_*})<k^2\}=n_-(A_{r_*}).
     \]
    For all $r<r_*$ sufficiently close to $r_*$, strict domain monotonicity gives exactly the same number $q$ of Dirichlet eigenvalues below $k^2$; hence $n_-(A_r)=q$. By the choice of the Riesz circle $\Gamma$ and norm-resolvent continuity, the spectrum of $A_r$ outside $\ran P_r$ stays away from zero and carries these same $q$ negative eigenvalues. Therefore the $m$ eigenvalues in $\ran P_r$ cannot be negative. Since the radii under consideration are nonresonant, they cannot vanish either. Thus the whole Riesz cluster is positive. Since it converges to the zero eigenspace of $A_{r_*}$, there exists $\varepsilon_r>0$ with $\varepsilon_r\to0$ such that
     \begin{equation}\label{d:eq:small-positive-cluster}
          0<A_r|_{\ran P_r}\le \varepsilon_r I
     \end{equation}
      for all sufficiently close nonresonant $r<r_*$. Equivalently, every eigenvalue of $A_r$ on $\ran P_r$ lies in $(0,\varepsilon_r]$.

     By construction, $P_r$ contains the entire spectral cluster near
     zero.  The remaining spectrum stays a fixed positive distance away
     from zero when $r$ is close to $r_*$.  Hence there exists
     $\delta>0$, independent of such $r$, for which $\sigma\left(A_r|_{\ran(I-P_r)}\right)\cap(-\delta,\delta)=\varnothing$. Consequently,
     \begin{equation}\label{d:eq:complement-inverse}
         \bigl\|A_r^{-1}(I-P_r)\bigr\|
         \le \frac1\delta .
     \end{equation}
     Thus the only part of $A_r^{-1}$ that can diverge as
     $r \to r_*-$ is its restriction to the $m$-dimensional space
     $\ran P_r$.

    \medskip
    \noindent
    \emph{Step 2: choose $m$ boundary data which detect the small
        spectral cluster.}
    Choose an orthonormal basis $
    \phi_1,\ldots,\phi_m$
    of $\ker A_{r_*}$.  Since each $\phi_j$ satisfies the Dirichlet
    condition on $\T$, its nontrivial boundary information is carried by
    its normal derivative.  Set
    \[
    g_j:=\partial_\nu\phi_j|_{\T},
    \qquad j=1,\ldots,m.
    \]
    Elliptic regularity gives $g_j\in C^\infty(\T)$.

    The functions $g_1,\ldots,g_m$ are linearly independent.  Indeed,
    suppose that $
    \sum_{j=1}^m c_j g_j=0$
    and set $\phi=\sum_{j=1}^m c_j\phi_j$.
    Then $\phi$ satisfies
    \[
    A_{r_*}\phi=0,
    \qquad
    \phi|_{\T}=0,
    \qquad
    \partial_\nu\phi|_{\T}=0.
    \]
    After transferring $\phi$ back to $\Omega_{r_*}$, we obtain a
    Helmholtz solution with vanishing Dirichlet and Neumann data on
    $\partial\Omega_{r_*}$.  Uniqueness for the Cauchy problem (equivalently, boundary unique continuation; see, e.g., \cite{LM,McLean}) therefore
    implies $\phi=0$, and hence $c_1=\cdots=c_m=0$.

    We now use these normal derivatives as boundary test data for the
    Dirichlet-to-Neumann map.  Accordingly, set
    \[
    h_j:=g_j,
    \qquad
    h_c:=\sum_{j=1}^m c_jh_j.
    \]
    The point of this choice is that the boundary data $h_j$ couple
    nontrivially, and with full multiplicity, to the eigenspace
    $\ker A_{r_*}$.  This coupling will be quantified in Step~4 through
    the Gram matrix of the functions $g_j$.

    \medskip
    \noindent
    \emph{Step 3: express the Dirichlet-to-Neumann form through
        $A_r^{-1}$.}
    Fix a sufficiently close nonresonant radius $r<r_*$, so that
    $A_r$ is invertible.  Choose smooth extensions
    \[
    \mathcal E h_j\in C^\infty(\overline\D),
    \qquad
    (\mathcal E h_j)|_{\T}=h_j,
    \]
    and extend $\mathcal E$ linearly to $
    \mathcal H_*:=\operatorname{span}\{h_1,\ldots,h_m\}$.
    For $h\in\mathcal H_*$, set
    \[
    f_{r,h}:=\mathscr A_r\mathcal Eh,
    \qquad
    v_{r,h}:=A_r^{-1}f_{r,h},
    \]
    and define
    \[
    u_{r,h}:=\mathcal Eh-v_{r,h}.
    \]
    Since $v_{r,h}\in\Dom A_r\subset H_0^1(\D)$, it has zero boundary
    trace.  Moreover,
    \[
    \mathscr A_rv_{r,h}=f_{r,h}
    =\mathscr A_r\mathcal Eh.
    \]
    Consequently
    \[
    \mathscr A_ru_{r,h}=0
    \quad\text{in }\D,
    \qquad
    u_{r,h}|_{\T}=h.
    \]
    Thus $u_{r,h}$ is precisely the solution of the pulled-back Dirichlet
    problem with boundary value $h$.

   With the convention fixed above, let
    \[
    q_r[p,q]
    :=
    \int_\D
    \left(
    \nabla\overline p\cdot\nabla q
    -
    k^2w_r\,\overline p\,q
    \right)\,dA
    \]
    be the Hermitian sesquilinear form on $H^1(\D)$
extending the form associated with $A_r$ on $H_0^1(\D)$.
By Green's identity
    and the normalization of the Dirichlet-to-Neumann map,
    \[
    \langle h,\DD_rh\rangle
    =
    q_r[u_{r,h},u_{r,h}].
    \]
    Since $v_{r,h}\in H_0^1(\D)$ and
    $\mathscr A_ru_{r,h}=0$, the weak equation for $u_{r,h}$ gives $
    q_r[v_{r,h},u_{r,h}]=0$.
    By the Hermitian symmetry of $q_r$, also $
    q_r[u_{r,h},v_{r,h}]=0$.
     Using $
    \mathcal Eh=u_{r,h}+v_{r,h}$,
    we therefore obtain
    \[
    q_r[\mathcal Eh,\mathcal Eh]
    =
    q_r[u_{r,h},u_{r,h}]
    +
    q_r[v_{r,h},v_{r,h}].
    \]
    Moreover, $
    q_r[v_{r,h},v_{r,h}]
    =
    \langle f_{r,h},A_r^{-1}f_{r,h}\rangle$.
    Hence
    \begin{equation}\label{d:eq:schur}
        \langle h,\DD_rh\rangle
        =
        q_r[\mathcal Eh,\mathcal Eh]
        -
        \langle f_{r,h},A_r^{-1}f_{r,h}\rangle.
    \end{equation}

    This identity separates a uniformly bounded term from the possible
    singularity.  Indeed, $\mathcal H_*$ is finite dimensional, the
    extensions $\mathcal Eh$ are fixed, and $w_r$ remains uniformly
    bounded for $r$ near $r_*$.  Hence
    \[
    |q_r[\mathcal Eh,\mathcal Eh]|
    \le C\|h\|^2
    \]
    uniformly in such $r$.  The only term that can diverge as
    $r\to r_*-$ is therefore
    \[
    \langle f_{r,h},A_r^{-1}f_{r,h}\rangle,
    \]
    through the small positive spectral cluster of $A_r$ identified in    Step~1.

    \medskip
    \noindent
    \emph{Step 4: the small positive cluster yields negative divergence of the Dirichlet-to-Neumann quadratic form.}
    We first show that the boundary data chosen in Step~2 couple
    uniformly to all $m$ directions of the small spectral cluster.
    At $r=r_*$, Green's identity and
    $A_{r_*}\phi_i=0$ give
    \begin{equation}\label{d:eq:cluster-coupling}
        \langle\phi_i,f_{r_*,h}\rangle_{L^2(\D)}
        =
        \int_\T \overline{g_i}\,h\,d\theta.
    \end{equation}
    Indeed, $f_{r_*,h}=\mathscr A_{r_*}\mathcal Eh$ and
    $\phi_i|_{\T}=0$, so the interior terms cancel and only the normal
    derivative of $\phi_i$ remains on the boundary.

    For $h=h_c=\sum_{j=1}^m c_jg_j$, equation \eqref{d:eq:cluster-coupling} becomes
    \[
    \langle\phi_i,f_{r_*,h_c}\rangle
    =
    \sum_{j=1}^m G_{ij}c_j,
    \qquad
    G_{ij}:=\int_\T\overline{g_i}g_j\,d\theta.
    \]
    The matrix $G$ is the Gram matrix of the linearly independent
    functions $g_1,\ldots,g_m$, and is therefore positive definite.
    Since $\phi_1,\ldots,\phi_m$ form an orthonormal basis of
    $\ran P_*=\ker A_{r_*}$,
    \[
    \|P_*f_{r_*,h_c}\|_{L^2(\D)}^2
    =
    \sum_{i=1}^m
    \left|
    \langle\phi_i,f_{r_*,h_c}\rangle
    \right|^2
    =
    |Gc|^2.
    \]
    Hence there is a constant $c_*>0$ such that for any $c\in\mathbb C^m$,
    \[
    \|P_*f_{r_*,h_c}\|_{L^2(\D)}
    \ge c_*|c|.
    \]
    Since $P_r\to P_*$ in operator norm and
    $f_{r,h_c}\to f_{r_*,h_c}$ uniformly for $|c|=1$, after shrinking
    the neighborhood of $r_*$ we obtain a uniform constant $c_0>0$ such
    that
    \begin{equation}\label{d:eq:cluster-lower-bound}
        \|P_rf_{r,h_c}\|_{L^2(\D)}
        \ge c_0|c|
    \end{equation}
    for every $c\in\mathbb C^m$ and every sufficiently close
    $r<r_*$.

    We now estimate the singular term in \eqref{d:eq:schur}.
    Since $P_r$ is a spectral projection of $A_r$, it commutes with
    $A_r^{-1}$.  Therefore
    \begin{align*}
        \langle f_{r,h_c},A_r^{-1}f_{r,h_c}\rangle
         =
        \langle P_rf_{r,h_c},
        A_r^{-1}P_rf_{r,h_c}\rangle +
        \langle (I-P_r)f_{r,h_c},
        A_r^{-1}(I-P_r)f_{r,h_c}\rangle.
    \end{align*}
    On $\ran P_r$, \eqref{d:eq:small-positive-cluster} implies
    \[
    A_r^{-1}|_{\ran P_r}
    \ge \varepsilon_r^{-1}I.
    \]
    Consequently, by \eqref{d:eq:cluster-lower-bound},
    \begin{align*}
        -\langle P_rf_{r,h_c},
        A_r^{-1}P_rf_{r,h_c}\rangle
         \le
        -\frac{1}{\varepsilon_r}
        \|P_rf_{r,h_c}\|^2  \le
        -\frac{c_0^2}{\varepsilon_r}|c|^2.
    \end{align*}

    On the complementary spectral subspace, the contribution is uniformly bounded.
    Indeed, \eqref{d:eq:complement-inverse} and the finite-dimensional
    dependence of $f_{r,h_c}$ on $c$ give
    \[
    \left|
    \langle (I-P_r)f_{r,h_c},
    A_r^{-1}(I-P_r)f_{r,h_c}\rangle
    \right|
    \le C_1|c|^2.
    \]
    Likewise, since the extensions $\mathcal Eh_c$ lie in a fixed
    finite-dimensional space and $w_r$ remains uniformly bounded near
    $r_*$,
    \[
    |q_r[\mathcal Eh_c,\mathcal Eh_c]|
    \le C_2|c|^2.
    \]

    Substituting these estimates into \eqref{d:eq:schur}, we obtain
    \[
    \langle h_c,\DD_rh_c\rangle
    \le
    -\frac{c_0^2}{\varepsilon_r}|c|^2
    +(C_1+C_2)|c|^2.
    \]
    Thus, with $
    M_r
    :=
    \frac{c_0^2}{\varepsilon_r}-(C_1+C_2)$,
    we have
    \[
    M_r\longrightarrow+\infty
    \qquad(r \to r_*-),
    \]
    and therefore
    \[
    \langle h_c,\DD_rh_c\rangle
    \le -M_r|c|^2
    \qquad\text{for every }c\in\mathbb C^m.
    \]
    This proves \eqref{d:eq:pole}.
    \end{proof}

    Thus an $m$-dimensional Dirichlet eigenspace at the resonant radius
    $r_*$ produces an $m$-dimensional space of boundary data on which
    the Dirichlet-to-Neumann quadratic form tends uniformly to
    $-\infty$ as $r \to r_*-$.  Indeed, the corresponding $m$ eigenvalues of $A_r$ approach $0$ from the positive side, so $A_r^{-1}$ becomes unbounded on their spectral subspace. The negative sign in \eqref{d:eq:schur} then gives divergence to $-\infty$ of the Dirichlet-to-Neumann quadratic form on the chosen boundary subspace. The next subsection relates this divergence to the unitary monodromy.

\subsection{Herglotz trace density and exclusion of fixed vectors}

The negative divergence of the Dirichlet-to-Neumann quadratic form was established using boundary data $h_1,\ldots,h_m$ constructed from the resonant eigenspace. By contrast, the Hilbert-space transport developed in
Section~\ref{d:sec:transport} is available only for Herglotz waves:
for these waves the normalized derivative sequence is known to belong to $\HH$.  To apply the transport identity to these boundary data, we use approximation by Herglotz traces. 

The first ingredient is that, away from the Dirichlet spectrum,
Herglotz traces are dense in the boundary space in which the
Dirichlet-to-Neumann map acts.  This will later allow us, at each
fixed nonresonant radius, to approximate the finitely many boundary
directions obtained in Lemma~\ref{d:lem:pole}. The following statement is a standard density result for Herglotz wave functions; see, for example, \cite{CK,ColtonSleeman2001,Weck2004}.

\begin{lemma}
    \label{d:lem:density}
    Let $G$ be a bounded smooth Jordan domain and suppose that
    $k^2\notin\sigma(-\Delta_G^D)$.  Then the boundary traces of the Herglotz waves
    \eqref{d:eq:herglotz} are dense in $H^{1/2}(\partial G)$.
\end{lemma}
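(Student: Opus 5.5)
The plan is to prove density by duality through the Hilbert-space (Hahn--Banach) criterion. Since $H^{1/2}(\partial G)$ is a Hilbert space, the span of Herglotz traces is dense if and only if every continuous functional annihilating it vanishes. Such functionals are represented by elements $\psi\in H^{-1/2}(\partial G)$ under the boundary duality. So we fix $\psi$ with $\langle \psi, u_a|_{\partial G}\rangle=0$ for all $a\in L^2(\T)$, and aim to show $\psi=0$.

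First we unpack the annihilation condition. By Fubini, pairing $\psi$ with the plane-wave integrand gives
\[
0=\int_0^{2\pi}a(\phi)\,\bigl\langle\psi,e^{-ik\operatorname{Re}(\cdot\,e^{-i\phi})}\bigr\rangle\,\frac{d\phi}{2\pi}
\]
for all $a\in L^2(\T)$. Hence the continuous function
\[
\phi\mapsto\bigl\langle\psi,e^{-ik\operatorname{Re}(\cdot\,e^{-i\phi})}\bigr\rangle
\]
vanishes identically. Now introduce the single-layer potential
\[
(\mathcal S\psi)(x)=\langle\psi,\Phi_k(x-\cdot)\rangle,
\]
where $\Phi_k$ is the outgoing fundamental solution of $\Delta+k^2$. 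It solves the Helmholtz equation off $\partial G$ and satisfies the Sommerfeld radiation condition. Its far-field pattern in direction $\hat x=e^{i\phi'}$ is a nonzero constant times the pairing of $\psi$ with $e^{-ik\hat x\cdot y}$. Up to the sign convention $\phi\leftrightarrow\phi'+\pi$, this is exactly the vanishing function above. Therefore the far field of $\mathcal S\psi$ vanishes. By Rellich's lemma and unique continuation, $\mathcal S\psi\equiv0$ in the exterior $\mathbb R^2\setminus\overline G$.

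Next we pass to the interior. The single-layer potential is continuous across $\partial G$ in the $H^{1/2}$ trace sense, so the interior function $w=\mathcal S\psi|_G\in H^1(G)$ solves $(\Delta+k^2)w=0$ in $G$ with trace $w|_{\partial G}=0$. Since $k^2\notin\sigma(-\Delta_G^D)$, this forces $w=0$ in $G$. The jump relation for the normal derivative of the single layer gives
\[
\psi=\partial_\nu^-\mathcal S\psi-\partial_\nu^+\mathcal S\psi=0
\]
in $H^{-1/2}(\partial G)$, which proves density.

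The main obstacle is the rigorous handling of layer potentials with densities only in $H^{-1/2}$ on a smooth boundary. This requires $H^{-1/2}\to H^1_{\rm loc}$ mapping, continuity of the trace, and the jump relation in the weak sense. All of these are standard on smooth (even Lipschitz) boundaries; see \cite{McLean,CK}, and we would cite them rather than reprove them. One must also check that the far-field identification holds for distributional densities. This follows because the asymptotics of $\Phi_k(x-y)$ are uniform, in $C^\infty$ in $y$, on a neighborhood of $\partial G$, so pairing with $\psi$ commutes with the asymptotic expansion.
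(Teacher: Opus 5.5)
Your argument is correct, but it takes a different route from the paper. The paper's proof is a two-line reduction:
\begin{itemize}
\item nonresonance produces the Helmholtz solution $u\in H^1(G)$ with trace $h$;
\item the cited Herglotz--Runge theorem supplies Herglotz waves $u_n\to u$ in $H^1(G)$;
\item continuity of the trace map $H^1(G)\to H^{1/2}(\partial G)$ finishes.
\end{itemize}

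You instead prove density directly in the trace space by Hahn--Banach. An annihilator $\psi\in H^{-1/2}(\partial G)$ is orthogonal to every plane-wave trace, so its single layer has vanishing far field. By Rellich's lemma and unique continuation it vanishes in the exterior, which is connected by the Jordan curve theorem. It therefore has zero interior trace, so it vanishes in $G$ by nonresonance, and the normal-derivative jump relation gives $\psi=0$. This is essentially the classical duality proof of Herglotz density, moved from $L^2(\partial G)$ to the $H^{1/2}$--$H^{-1/2}$ pairing.

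What each route buys:
\begin{itemize}
\item Yours is self-contained modulo standard layer-potential facts: $\mathcal S\colon H^{-1/2}(\partial G)\to H^1_{\rm loc}(\mathbb R^2)$, continuity of traces across $\partial G$, and the weak jump relation, all valid even on Lipschitz boundaries. It also shows exactly where nonresonance enters, namely as interior Dirichlet uniqueness for $\mathcal S\psi|_G$.
\item The paper's route is shorter but places all the analysis inside the cited $H^1(G)$ approximation theorem. At a nonresonant energy that theorem is equivalent to the lemma via the bounded Poisson operator, and such Runge-type results are typically proved by a duality argument like yours.
\end{itemize}

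Two small remarks on your write-up. First, the step from $\langle\psi,u_a|_{\partial G}\rangle$ to $\int a(\phi)\langle\psi,e^{-ik\operatorname{Re}(\cdot\,e^{-i\phi})}\rangle\,d\phi/2\pi$ is cleaner justified by continuity of $\phi\mapsto e^{-ik\operatorname{Re}(\cdot\,e^{-i\phi})}|_{\partial G}$ into $H^{1/2}(\partial G)$. This makes $u_a|_{\partial G}$ an $H^{1/2}$-valued Bochner integral, which is a sounder justification than Fubini. Second, no shift $\phi\leftrightarrow\phi'+\pi$ is needed. Indeed $e^{-ik\operatorname{Re}(ye^{-i\phi})}=e^{-ik\hat d\cdot y}$ with $\hat d=e^{i\phi}$, which already matches the far-field kernel $e^{-ik\hat x\cdot y}$ at $\hat x=\hat d$.
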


\begin{proof}
Given $h\in H^{1/2}(\partial G)$, nonresonance gives a unique solution $u\in H^1(G)$ of $(\Delta+k^2)u=0$ with trace $h$.  The Herglotz--Runge approximation theorem gives Herglotz waves $u_n$ with $u_n\to u$ in $H^1(G)$; see \cite{CK,ColtonSleeman2001,Weck2004}.   Thus we use the standard $H^1$-density in the Helmholtz solution space, and the boundary statement follows from the trace theorem: continuity of the trace map $H^1(G)\to H^{1/2}(\partial G)$ then yields $u_n|_{\partial G}\to h$ in $H^{1/2}(\partial G)$.
\end{proof}

A smooth parametrization of $\partial G$ induces an isomorphism on
$H^{1/2}$, so the same density statement holds after pulling the
boundary traces back to $\T$.  In particular, it applies to
$\Omega_r$ whenever
\[
k^2\notin\sigma(-\Delta_{\Omega_r}^D).
\]
As above, such a radius $r$ will be called \emph{nonresonant}.

  The second ingredient is
that the same density statement rules out the spectral point $1$ as
an eigenvalue of the monodromy $V_r$.
As a result, $\ran(I-V_r)$ is dense, and the inverse
of $I-V_r$ is defined on that range, which is precisely
the natural domain of the Cayley operator introduced below.

\begin{lemma}
    \label{d:lem:injective}
    At every nonresonant radius $r$,
    \[
    \ker(I-V_r)=\{0\}.
    \]
\end{lemma}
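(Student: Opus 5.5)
We argue by contradiction through a duality argument: a fixed vector of $V_r$ will be paired against the periodicity identity \eqref{d:eq:periodicity-first} for every Herglotz wave. Herglotz trace density (Lemma~\ref{d:lem:density}) then forces the zeroth component of the associated transported solution to vanish, and Lemma~\ref{d:lem:zeroth-observation} finishes the argument.

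Fix a nonresonant $r$ and write $W=W_r$, $V=V_r$, $\gamma=\gamma_r$. Suppose $Vv=v$ for some $v\in\HH$. Since $V$ is unitary, this gives $V^*v=v$. Set $y(\theta):=W(\theta)v$. This is a solution of the homogeneous transport equation \eqref{d:eq:homogeneous-transport} along $\gamma$, and it is $2\pi$-periodic because $y(2\pi)=Vv=v=y(0)$.

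Now take an arbitrary Herglotz wave $u_a$. Let $h=u_a\circ\gamma$ be its boundary trace and let $v_a=y_a(0)\in\HH$ be its initial normalized derivative vector, which lies in $\HH$ by \eqref{d:eq:jet-parseval}. Taking the inner product of \eqref{d:eq:periodicity-first} with $v$, the left side becomes
\[
\langle v,(I-V)v_a\rangle=\langle (I-V^*)v,v_a\rangle=0 .
\]
On the right side we move $V$ across as $V^*v=v$ and pull the inner product inside the Bochner integral. Using $\langle v,W(s)^*e_0\rangle=\langle W(s)v,e_0\rangle=\overline{y_0(s)}$, this yields
\[
0=-\frac{ik}{2}\int_0^{2\pi}\overline{y_0(s)}\,\overline{\gamma'(s)}\,h(s)\,ds
\qquad\text{for every Herglotz trace } h .
\]
The function $g:=\overline{y_0\gamma'}$ is continuous on $\T$, since $y$ is continuous with values in $\HH$ and $\gamma$ is smooth. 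Hence $h\mapsto\int_0^{2\pi} g\,h\,d\theta$ is a bounded linear functional on $L^2(\T)$, and therefore also on $H^{1/2}(\T)$.

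Because $r$ is nonresonant, Lemma~\ref{d:lem:density}, pulled back to $\T$ through the smooth parametrization $\gamma$, says that Herglotz traces are dense in $H^{1/2}(\T)$. The functional above therefore vanishes on all of $H^{1/2}(\T)$, in particular on all trigonometric polynomials. So every Fourier coefficient of the continuous function $g$ is zero, and $g\equiv0$. Since $\gamma'\neq0$, we get $y_0\equiv0$. Lemma~\ref{d:lem:zeroth-observation} then gives $y\equiv0$, and hence $v=y(0)=0$.

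The step needing the most care is the pairing step: applying \eqref{d:eq:periodicity-first} requires that the Herglotz derivative sequence is genuinely single-valued and lies in $\HH$. This is exactly what Section~\ref{d:sec:transport} established, so the remainder is routine bookkeeping of adjoints together with the continuity of the functional, which is what lets the density lemma be used.
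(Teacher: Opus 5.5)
Your proof is correct. It reaches the same annihilation identity as the paper by a different derivation; after that, the density and Lemma~\ref{d:lem:zeroth-observation} steps are the same.

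The paper works only with the fixed-vector solution $y=W(\cdot)v$. It passes to the Fourier model of $\HH$, removes the multiplicative term with the integrating factor $E(\theta,\phi)$, and integrates around the loop. This gives the plane-wave identity \eqref{d:eq:fixed-plane-wave} for each $\phi$.

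You instead pair the periodicity identity \eqref{d:eq:periodicity-first}, which holds for every Herglotz wave, against the fixed vector. Since $V^*v=v$, the left side drops out and you get $\int_0^{2\pi}\overline{\gamma' y_0}\,h\,d\theta=0$ for every Herglotz trace $h$. Inserting the Herglotz representation of $h$, varying $a$, and conjugating shows this is equivalent to \eqref{d:eq:fixed-plane-wave}, so the two derivations are dual to each other.

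In the notation introduced later, your computation reads $\langle v,\OO_r^*h\rangle=\langle \OO_r v,h\rangle=0$. It therefore shows the lemma structurally: density gives $\ker(I-V_r)\subset\ker\OO_r$, and $\ker\OO_r=\{0\}$ is exactly Lemma~\ref{d:lem:zeroth-observation}. Your route is shorter and reuses an identity the paper needs anyway for Proposition~\ref{d:prop:identity}.

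The cost is that your route rests on the variation-of-constants formula for the driven Herglotz jet. That requires $\theta\mapsto y_a(\theta)$ to be a continuously differentiable $\HH$-valued solution of \eqref{d:eq:driven-jet}. This holds, since $z\mapsto a(\phi)e^{-ik\operatorname{Re}(ze^{-i\phi})}$ is smooth into $L^2$, but it is an extra fact you should state rather than attribute to Section~\ref{d:sec:transport}. The paper's computation needs only the homogeneous transport of the fixed vector and the explicit form of the plane waves.
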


\begin{proof}
    Fix a nonresonant radius and suppress the subscript $r$ for simplicity.
    Suppose that $Vv=v$ for some $v\in\HH$, and set
    \[
    y(\theta)=W(\theta)v.
    \]
    Then $y(2\pi)=y(0)=v$.

    We use the standard Fourier realization of
    $\HH=\ell^2(\mathbb N_0)$.  For each $\theta$, put
    \[
    Y(\theta,\phi)
    :=
    \sum_{n\ge0}y_n(\theta)e^{in\phi}
    \in L^2\!\left(\T,\frac{d\phi}{2\pi}\right).
    \]
    Under this identification,
    \[
    Sy\longleftrightarrow e^{i\phi}Y,
    \qquad
    S^*y\longleftrightarrow
    e^{-i\phi}(Y-y_0).
    \]
    Hence the homogeneous transport equation
    \[
    y'
    =
    -\frac{ik}{2}
    \bigl(\gamma'S^*+\overline{\gamma'}S\bigr)y
    \]
    becomes the following $L^2(\T)$-valued
    first-order differential equation in $\theta$:
    \begin{equation*}
        \partial_\theta Y
        =
        -ik\,\operatorname{Re}
        \bigl(\gamma'(\theta)e^{-i\phi}\bigr)Y
        +
        \frac{ik}{2}\gamma'(\theta)e^{-i\phi}y_0(\theta).
    \end{equation*}

After introducing the integrating factor $
    E(\theta,\phi)
    :=
    \exp\left[
    ik\,\operatorname{Re}
    \bigl((\gamma(\theta)-\gamma(0))e^{-i\phi}\bigr)
    \right]$, we get

    \[
    \partial_\theta(EY)
    =
    \frac{ik}{2}
    e^{-i\phi}\gamma'(\theta)y_0(\theta)E(\theta,\phi).
    \]
    Integrating from $0$ to $2\pi$, and using   $\gamma(2\pi)=\gamma(0)$ and $Y(2\pi,\phi)=Y(0,\phi)$,
    we obtain
    \[
    \int_0^{2\pi}
    \gamma'(\theta)y_0(\theta)
    \exp\left[
    ik\,\operatorname{Re}
    \bigl((\gamma(\theta)-\gamma(0))e^{-i\phi}\bigr)
    \right]\,d\theta
    =0.
    \]
    Multiplying by the  nonzero phase $\exp\left[
    ik\,\operatorname{Re} \bigl(\gamma(0)e^{-i\phi}\bigr)
    \right]$
    yields
    \begin{equation}\label{d:eq:fixed-plane-wave}
        \int_0^{2\pi}
        \gamma'(\theta)y_0(\theta)
        e^{ik\operatorname{Re}(\gamma(\theta)e^{-i\phi})}\,d\theta
        =0.
    \end{equation}
    The identity initially holds for almost every $\phi$, and hence for
    every $\phi$ by continuity.

    Replacing $\phi$ by $\phi+\pi$ in
    \eqref{d:eq:fixed-plane-wave} gives
    \[
    \int_0^{2\pi}
    \gamma'(\theta)y_0(\theta)
    e^{-ik\operatorname{Re}(\gamma(\theta)e^{-i\phi})}\,d\theta
    =0
    \qquad(\phi\in\T).
    \]
    These exponentials are precisely the plane waves occurring in the
    Herglotz representation \eqref{d:eq:herglotz}.  Therefore the
    continuous functional
    \[
    \Lambda(h)
    :=
    \int_0^{2\pi}
    \gamma'(\theta)y_0(\theta)h(\theta)\,d\theta
    \]
    annihilates every Herglotz trace.  By
    Lemma~\ref{d:lem:density}, such traces are dense in
    $H^{1/2}(\T)$ at a nonresonant radius.  Hence $\Lambda=0$,
    and consequently
    \[
    \gamma'(\theta)y_0(\theta)=0.
    \]
    Since $\gamma$ is a regular parametrization,
    $\gamma'(\theta)\ne0$, so $
    y_0(\theta)=0$ for all $\theta$.
    Lemma~\ref{d:lem:zeroth-observation} now gives
    $y\equiv0$, and hence $v=y(0)=0$.
\end{proof}

Since $V_r$ is unitary, Lemma~\ref{d:lem:injective} also gives
\[
\overline{\ran(I-V_r)}
=
\ker(I-V_r^*)^\perp
=
\HH.
\]
Thus $(I-V_r)^{-1}$ is well defined on the dense subspace
$\ran(I-V_r)$.  This is precisely the domain needed for the Cayley transform
below.

\section{The Cayley transform and resonance phases}
\label{d:sec:dirichlet-branch}

\subsection{The Cayley transform}
The Dirichlet-to-Neumann map is self-adjoint, whereas the monodromy
$V_r$ is unitary.  In order to compare their spectra, it is convenient
to replace the unit circle by a real spectral coordinate.  The Cayley transform is the standard passage from a unitary
operator to a self-adjoint operator.
We record the precise statement. For background, see \cite{Kato} and \cite[Section~13.1]{Schmudgen}.

 \begin{lemma}
     \label{d:lem:cayley}
     Let $V$ be unitary on a Hilbert space $\HH$.  Assume $V-I$ is compact and
     $\ker(I-V)=\{0\}$, and set
     \[
     \mathcal K(V):=i(I+V)(I-V)^{-1},
     \qquad
     \Dom\mathcal K(V):=\ran(I-V).
     \]
     Then $\Dom\mathcal K(V)$ is dense in $\HH$,
     $\mathcal K(V)$ is self-adjoint with compact resolvent.

     If $U=V^*$ and $e^{i\theta}$, $0<\theta<2\pi$, is an eigenvalue of $U$,
     then $\cot(\theta/2)$ is an eigenvalue of $\mathcal K(V)$ with the
     same multiplicity.
 \end{lemma}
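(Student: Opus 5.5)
The plan is to use the spectral theorem for the unitary $V$ and realize $\mathcal K(V)$ as a Borel function of $V$. Let $E$ be the spectral measure of $V$ on $\T$. Since $\ker(I-V)=\{0\}$, the point $1$ carries no spectral mass: $E(\{1\})=0$. On $\T\setminus\{1\}$ define the real-valued function
\[
\kappa(e^{it})=i\,\frac{1+e^{it}}{1-e^{it}}=-\cot(t/2),\qquad 0<t<2\pi .
\]
The first step is to show that $\mathcal K(V)$ coincides with the operator $\kappa(V)=\int\kappa\,dE$ on its natural maximal domain. Write $\ran(I-V)=\{(I-V)w\}$. For $x=(I-V)w$ we have $(I+V)w=\kappa(V)(I-V)w$, so $\mathcal K(V)\subset\kappa(V)$. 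Conversely, if $x\in\Dom\kappa(V)$, then $w:=\int(1-\zeta)^{-1}\,dE(\zeta)\,x$ lies in $\HH$. Indeed $|(1-\zeta)^{-1}|\le\tfrac12(|\kappa(\zeta)|+1)$, because $|1+\zeta|+|1-\zeta|\ge2$ and $\kappa=i(1+\zeta)/(1-\zeta)$. Then $(I-V)w=x$, so the domains agree.

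Since $\kappa$ is real and $E(\{1\})=0$, the functional calculus gives that $\kappa(V)$ is self-adjoint and densely defined. Alternatively, density of $\ran(I-V)$ follows directly from $\overline{\ran(I-V)}=\ker(I-V^*)^\perp=\ker(I-V)^\perp=\HH$, using normality of $V$.

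The second step is the compact resolvent. The resolvent at $z=i$ is $(\kappa(V)-i)^{-1}=g(V)$ with
\[
g(\zeta)=\frac{1}{\kappa(\zeta)-i}=\frac{1-\zeta}{i(1+\zeta)-i(1-\zeta)}=\frac{1-\zeta}{2i\zeta}.
\]
Hence $g(V)=\tfrac{1}{2i}V^*(I-V)$, which is compact because $V-I$ is compact and $V^*$ is bounded. Compactness of one resolvent implies compactness of all resolvents by the resolvent identity. This is the only place the compactness hypothesis enters, and it is also the step I would double-check most carefully. One has to verify the algebra of $g$ precisely, and confirm that the spectral measure is supported away from the point $1$ in the sense required. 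The latter follows from the fact that the nonzero spectrum of $I-V$ consists of eigenvalues accumulating only at $0$.

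The last step is the eigenvalue correspondence, which is spectral mapping for eigenspaces. If $U=V^*$ has eigenvalue $e^{i\theta}$ with $0<\theta<2\pi$, then $\ker(U-e^{i\theta})=\ker(V-e^{-i\theta})$, since $V$ is unitary and hence normal. For $x$ in this kernel, $x=(I-V)\bigl((1-e^{-i\theta})^{-1}x\bigr)\in\Dom\mathcal K(V)$, and
\[
\mathcal K(V)x=\kappa(e^{-i\theta})x=-\cot(-\theta/2)\,x=\cot(\theta/2)\,x .
\]
Conversely, since $\kappa$ is injective on $\T\setminus\{1\}$, we have $E(\kappa^{-1}(\{c\}))=E(\{e^{-i\theta}\})$ for $c=\cot(\theta/2)$. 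Therefore $\ker(\mathcal K(V)-c)=\operatorname{ran}E(\{e^{-i\theta}\})=\ker(V-e^{-i\theta})$. This gives equality of multiplicities, and these are finite because $V-I$ is compact.
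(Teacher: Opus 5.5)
Your proof is correct. It follows the same outline as the paper (density, self-adjointness, an explicit resolvent, the eigenvector computation), but two of the steps are established differently.

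For self-adjointness, the paper simply invokes the standard inverse Cayley transform. You instead identify $\mathcal K(V)$ with the Borel function $\kappa(V)$, where $\kappa(e^{it})=-\cot(t/2)$, and prove equality of domains via $|1-\zeta|^{-1}\le\frac12(|\kappa(\zeta)|+1)$. This makes self-adjointness self-contained rather than cited.

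For the eigenvalue claim, the paper only computes $\mathcal K(V)v=\cot(\theta/2)v$ for $Vv=e^{-i\theta}v$ and asserts that the eigenspace is unchanged. Your injectivity of $\kappa$ on $\T\setminus\{1\}$ also supplies the reverse inclusion $\ker(\mathcal K(V)-\cot(\theta/2))\subset\ker(V-e^{-i\theta})$, which the paper leaves implicit.

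For compactness, the paper uses $(\mathcal K(V)+i)^{-1}=(I-V)/(2i)$, checked directly on $\ran(I-V)$ without functional calculus. Your resolvent at $+i$, namely $\frac{1}{2i}V^*(I-V)$, is equally valid; it just carries an extra bounded factor.

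There are two small slips, neither of which affects the argument.

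\begin{itemize}
\item The identity in your first step should read $i(I+V)w=\kappa(V)(I-V)w$; the factor $i$ is missing.
\item Your closing remark that the spectral measure should be \emph{supported away from $1$} is wrong as stated. When $\HH$ is infinite-dimensional, compactness of $V-I$ forces $1\in\sigma(V)$. Accumulation of the nonzero spectrum of $I-V$ at $0$ is exactly accumulation of $\sigma(V)$ at $1$, so the reason you give points the opposite way. All you need, and all you actually use, is $E(\{1\})=0$, which you correctly derived from $\ker(I-V)=\{0\}$.
\end{itemize}
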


\begin{proof}
Density of $\ran(I-V)$ follows from $\ker(I-V^*)=\ker(I-V)=\{0\}$.  This is the standard inverse Cayley transform of $V$; in particular it is self-adjoint.  One may also see the resolvent directly: on $\HH$,
\[
(\mathcal K(V)+i)^{-1}=\frac{I-V}{2i}.
\]
Hence the resolvent is compact because $V-I$ is compact.  Finally, if $Vv=e^{-i\theta}v$, then
\[
\mathcal K(V)v=i\frac{1+e^{-i\theta}}{1-e^{-i\theta}}v=\cot(\theta/2)v,
\]
and the eigenspace is unchanged.
\end{proof}

 At a nonresonant radius we now apply the lemma to $V_r$ and write
 \begin{equation}\label{d:eq:cayley}
     \KK_r
     :=
     i(I+V_r)(I-V_r)^{-1},
     \qquad
     \Dom\KK_r=\ran(I-V_r).
 \end{equation}
Lemma~\ref{d:lem:injective} gives
$\ker(I-V_r)=\{0\}$,
while Proposition~\ref{d:prop:radial} gives
$V_r-I\in\mathcal S_1$.
In particular, $V_r-I$ is compact.
Thus the Cayley transform \eqref{d:eq:cayley}
satisfies all the hypotheses of Lemma~\ref{d:lem:cayley}: $\KK_r$ is a densely defined self-adjoint operator with compact
 resolvent.

 Recall from \eqref{d:eq:Ur-definition} that
$U_r=R_r^*V_r^*R_r$.
 Hence $U_r$ and $V_r^*$ are unitarily equivalent and have the same
 eigenvalues with the same multiplicities.  Consequently, for
 $0<\theta<2\pi$,
 \begin{equation}\label{d:eq:cot}
     e^{i\theta}\in\sigma(U_r)
     \quad\Longleftrightarrow\quad
     \cot(\theta/2)\in\sigma(\KK_r),
 \end{equation}
 with equal multiplicities.  Since $U_r-I$ is compact, every spectral
 point $e^{i\theta}\ne1$ appearing here is in fact an isolated
 eigenvalue of finite multiplicity.

 In particular,
 \[
 \theta \to2\pi-
 \quad\Longleftrightarrow\quad
 \cot(\theta/2)\longrightarrow-\infty.
 \]
 Thus, in order to prove that a Dirichlet resonance produces $m$
 phases of $U_r$ approaching $1$ from the $2\pi$ side, it is enough
 to prove that $\KK_r$ develops at least $m$ eigenvalues tending to
 $-\infty$.  The next step derives the boundary identity that transfers
 the negative divergence of the Dirichlet-to-Neumann quadratic form to the Cayley operator. 

 \subsection{The Cayley-DtN identity}
 \label{d:sec:boundary}

 We now connect the Hilbert-space transport to the
 Dirichlet-to-Neumann map.  The key point is that the normal derivative
 of a Helmholtz solution can be recovered from its boundary value and
 the zeroth component of the normalized derivative sequence.

 Fix a radius $r>0$, write $\gamma=\gamma_r$, and let $h=u\circ\gamma$ as before.
 For a positively oriented boundary curve, the outward unit normal is $
 \nu=-i\,\frac{\gamma'}{|\gamma'|}$. Hence the tangential and normal derivatives satisfy
 \begin{align}
     h'
     &=
     \gamma'\partial u+\overline{\gamma'}\,\bar\partial u,
     \label{d:eq:tangential-derivative}\\
     |\gamma'|\partial_\nu u
     &=
     -i\gamma'\partial u
     +i\overline{\gamma'}\,\bar\partial u.
     \label{d:eq:normal-derivative}
 \end{align}
 For a Herglotz wave $u=u_a$, let
 \[
 y_n
 =
 \left(\frac{2i}{k}\right)^{n+1}\partial^{n+1}u,
 \qquad n\ge0,
 \]
 denote its square-summable normalized derivative sequence.  Since $
 y_0=\frac{2i}{k}\partial u$,
 subtracting the appropriate multiple of
 \eqref{d:eq:tangential-derivative} from
 \eqref{d:eq:normal-derivative} gives the elementary identity
 \begin{equation}\label{d:eq:normal-from-y0}
     |\gamma'|\partial_\nu u
     =
     ih'-k\gamma' y_0.
 \end{equation}
 Thus, in order to recover the Dirichlet-to-Neumann map from the
 transport equation, we only need to understand the quantity
 $\gamma' y_0$.

 The variation-of-constants formula  \eqref{d:eq:variation-first} for the driven transport gives
 \[
 y(\theta)
 =
 W_r(\theta)v
 -
 \frac{ik}{2}W_r(\theta)
 \int_0^\theta
 W_r(s)^*e_0\,
 \overline{\gamma_r'(s)}h(s)\,ds,
 \]
 where $v=y(0)\in\HH$.
 Taking the zeroth component and multiplying by $\gamma_r'(\theta)$
 shows that $\gamma_r'y_0$ naturally splits into two pieces: one
 coming from the transported initial vector $v$, and one coming from
 the accumulated boundary forcing $h$.  This motivates the following
 two operators.

 Define the operator $\OO_r:\HH\longrightarrow L^2(\T,d\theta)$
 by
 \begin{equation}\label{d:eq:observation}
     (\OO_rv)(\theta)
     :=
     \gamma_r'(\theta)
     \langle e_0,W_r(\theta)v\rangle,
 \end{equation}
 and define the operator
 $\TD_r: L^2(\T,d\theta)\longrightarrow L^2(\T,d\theta)$ by
 \begin{equation}\label{d:eq:volterra-definition}
     (\TD_rh)(\theta)
     :=
     \int_0^\theta
     \gamma_r'(\theta)\overline{\gamma_r'(s)}
     \langle e_0,
     W_r(\theta)W_r(s)^*e_0\rangle
     h(s)\,ds.
 \end{equation}
Since $y_0(\theta)=\langle e_0,y(\theta)\rangle$,
the definitions \eqref{d:eq:observation} and
\eqref{d:eq:volterra-definition} give
 \begin{equation}\label{d:eq:y0-observation}
     \gamma_r' y_0
     =
     \OO_rv-\frac{ik}{2}\TD_rh.
 \end{equation}

 Using our convention that inner products are conjugate-linear
 in the first variable,
 \[
 \begin{aligned}
     \langle \OO_rv,h\rangle_{L^2(\T)}
      =
     \int_0^{2\pi}
     \overline{\gamma_r'(s)}
     \langle W_r(s)v,e_0\rangle h(s)\,ds =
     \left\langle
     v,
     \int_0^{2\pi}
     W_r(s)^*e_0\,\overline{\gamma_r'(s)}h(s)\,ds
     \right\rangle_{\HH}.
 \end{aligned}
 \]
  Hence the adjoint of $\OO_r$ is
 \begin{equation}\label{d:eq:Ostar}
     \OO_r^*h
     =
     \int_0^{2\pi}
     W_r(s)^*e_0\,
     \overline{\gamma_r'(s)}h(s)\,ds,
 \end{equation}
 and $\OO_r$ is Hilbert--Schmidt, with
 \[
 \|\OO_r\|_{\mathcal S_2}^2
 =
 \int_0^{2\pi}|\gamma_r'(\theta)|^2\,d\theta.
 \]
 Here $\mathcal S_2$ denotes the Hilbert--Schmidt class.
 Indeed, the last identity follows from the unitarity of $W_r$ and
 \[
 \sum_{n\ge0}
 \bigl|
 \langle e_0,W_r(\theta)e_n\rangle
 \bigr|^2
 =
 \|W_r(\theta)^*e_0\|^2
 =
 1.
 \]

 There is also a useful algebraic relation between the two operators.
 The integral kernel of $\OO_r\OO_r^*$ is
 \[
 K_r(\theta,s)
 =
 \gamma_r'(\theta)\overline{\gamma_r'(s)}
 \langle e_0,
 W_r(\theta)W_r(s)^*e_0\rangle.
 \]
 The operator $\TD_r$ has this kernel on the triangle $s<\theta$,
 whereas $\TD_r^*$ has the same kernel on the complementary triangle
 $\theta<s$.  Since the diagonal has measure zero,
 \begin{equation}\label{d:eq:Volterra}
     \TD_r+\TD_r^*=\OO_r\OO_r^*.
 \end{equation}
 For $r$ in a compact subinterval of $(0,1]$, the operators
 $\OO_r$ and $\TD_r$ are bounded with locally uniform norms.

 We are now ready to state the identity that connects the
 Dirichlet-to-Neumann map with the Cayley operator $\KK_r$.

 \begin{proposition} 
     \label{d:prop:identity}
     Let $r$ be nonresonant and let $
     h=u_a\circ\gamma_r$
     be the boundary trace of a Herglotz wave.  Then $
     \OO_r^*h\in\Dom\KK_r=\ran(I-V_r)$,  and     \begin{equation}\label{d:eq:identity}
         \DD_rh
         =
         i\partial_\theta h
         +
         \frac{ik^2}{4}(\TD_r-\TD_r^*)h
         +
         \frac{k^2}{4}\OO_r\KK_r\OO_r^*h.
     \end{equation}
     The identity \eqref{d:eq:identity} is understood in $H^{-1/2}(\T)$. For Herglotz traces the Volterra and Cayley terms are represented by $L^2$ functions, while $i\partial_\theta:H^{1/2}(\T)\to H^{-1/2}(\T)$.
     Moreover, if we define the quadratic form $\mathfrak r_r$ on $H^{1/2}(\T)$ by
     \begin{equation}\label{d:eq:remainder-form}
         \mathfrak r_r[h]
         :=
         \langle h,i\partial_\theta h\rangle
         +
         \frac{ik^2}{4}
         \langle h,(\TD_r-\TD_r^*)h\rangle,
         \qquad h\in H^{1/2}(\T).
     \end{equation}
    Here the first pairing in \eqref{d:eq:remainder-form} is the $H^{1/2}$--$H^{-1/2}$ duality pairing (and agrees with the $L^2$ pairing for smooth $h$). Then $\mathfrak r_r$ is real-valued, and for every compact interval
     $I\subset(0,1]$ there exists $C_I>0$ such that
     \begin{equation}\label{d:eq:remainder}
         |\mathfrak r_r[h]|
         \le
         C_I\|h\|_{H^{1/2}(\T)}^2,
         \qquad
         r\in I,\quad h\in H^{1/2}(\T).
     \end{equation}
 \end{proposition}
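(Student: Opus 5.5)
The plan is to derive \eqref{d:eq:identity} by writing the zeroth component $\gamma_r'y_0$ of the Herglotz derivative sequence in two ways, once by transporting forward from $\theta=0$ and once backward from $\theta=2\pi$, and averaging; the Cayley operator then appears naturally. Fix a nonresonant $r$ and a Herglotz wave $u=u_a$. It is smooth up to the boundary, being real analytic on $\mathbb C$. Put $h=u\circ\gamma_r$ and $v=y(0)\in\HH$, using \eqref{d:eq:jet-parseval}.

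\emph{Step 1: the domain statement.} By \eqref{d:eq:Ostar}, the integral in \eqref{d:eq:periodicity-first} is exactly $\OO_r^*h$. Hence
\[
(I-V_r)v=-\tfrac{ik}{2}V_r\OO_r^*h .
\]
Since $(I-V_r)V_r^*=V_r^*-I$ and $V_r^*$ commutes with $I-V_r$, this rewrites as
\[
\OO_r^*h=\tfrac{2i}{k}(I-V_r)V_r^*v\in\ran(I-V_r)=\Dom\KK_r .
\]
Lemma~\ref{d:lem:injective} makes $(I-V_r)^{-1}$ well defined on this range. Applying \eqref{d:eq:cayley} gives
\[
\KK_r\OO_r^*h=i(I+V_r)\tfrac{2i}{k}V_r^*v=-\tfrac{2}{k}(I+V_r^*)v .
\]

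\emph{Step 2: two representations of $\gamma_r'y_0$.} The forward formula is \eqref{d:eq:y0-observation}, namely $\gamma_r'y_0=\OO_rv-\frac{ik}{2}\TD_rh$. For the backward formula, use $y(2\pi)=v$ and solve the driven equation \eqref{d:eq:driven-jet} from $2\pi$ down to $\theta$. Because $W_r(\theta)V_r^*$ is the transport normalized at $2\pi$, this gives
\[
y(\theta)=W_r(\theta)V_r^*v+\tfrac{ik}{2}W_r(\theta)\int_\theta^{2\pi}W_r(s)^*e_0\overline{\gamma_r'(s)}h(s)\,ds .
\]
Taking the zeroth component and multiplying by $\gamma_r'$, the integral over $(\theta,2\pi)$ has kernel $K_r$ on the triangle $\theta<s$, i.e.\ it equals $(\OO_r\OO_r^*-\TD_r)h=\TD_r^*h$ by \eqref{d:eq:Volterra}. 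Thus
\[
\gamma_r'y_0=\OO_rV_r^*v+\tfrac{ik}{2}\TD_r^*h .
\]
Averaging the two representations and inserting Step 1 yields
\[
\gamma_r'y_0=\tfrac12\OO_r(I+V_r^*)v-\tfrac{ik}{4}(\TD_r-\TD_r^*)h
=-\tfrac{k}{4}\OO_r\KK_r\OO_r^*h-\tfrac{ik}{4}(\TD_r-\TD_r^*)h .
\]
Substituting into \eqref{d:eq:normal-from-y0}, $|\gamma_r'|\partial_\nu u=ih'-k\gamma_r'y_0$, gives \eqref{d:eq:identity}. Here one uses that, at a nonresonant radius, $\DD_rh$ is exactly $|\gamma_r'|\partial_\nu u\circ\gamma_r$ for this smooth solution, by uniqueness of the Dirichlet problem. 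All terms except $i\partial_\theta h$ are $L^2$ functions; the identity then holds pointwise for smooth $h$, hence in $H^{-1/2}$.

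\emph{Step 3: the remainder form.} Real-valuedness of $\mathfrak r_r$ holds because $i\partial_\theta$ is symmetric on $H^{1/2}(\T)$ (integration by parts on the circle, extended by density through the duality pairing). The operator $i(\TD_r-\TD_r^*)$ is bounded and self-adjoint on $L^2$, so its quadratic form is real. For \eqref{d:eq:remainder}, in Fourier series $|\langle h,i\partial_\theta h\rangle|=|\sum n|\hat h_n|^2|\le\|h\|_{H^{1/2}}^2$. The Volterra term is bounded by $\frac{k^2}{2}\|\TD_r\|\,\|h\|_{L^2}^2$. The Hilbert--Schmidt norm of $\TD_r$ is at most $\|\gamma_r'\|_{L^2}^2$, since the kernel $K_r$ has modulus at most $|\gamma_r'(\theta)\gamma_r'(s)|$, and this is locally uniform in $r\in I$.

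The main obstacle I expect is Step 2's backward representation. It needs care with signs and the orientation of the triangle, so that the tail integral is precisely $\TD_r^*$ and not its transpose. This uses the conjugate-linear convention, $\overline{K_r(s,\theta)}=K_r(\theta,s)$, and the Volterra relation \eqref{d:eq:Volterra}. A secondary point is checking that $(I-V_r)^{-1}$ is applied only to the vector $(I-V_r)V_r^*v$, so no unbounded-operator subtleties arise.
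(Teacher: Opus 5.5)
Your proof is correct. Steps 1 and 3 (the domain statement and the remainder bound) coincide with the paper's; the difference is in how you derive \eqref{d:eq:identity}.

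The paper uses only the forward formula, which gives $\DD_rh=i\partial_\theta h-k\OO_rv+\frac{ik^2}{2}\TD_rh$. It then substitutes $v=-\frac{ik}{2}(I-V_r)^{-1}V_r\OO_r^*h$ and applies the operator identity $i(I-V_r)^{-1}V_r=\frac12\KK_r-\frac i2 I$ on $\ran(I-V_r)$. Only at the end does it use $\TD_r+\TD_r^*=\OO_r\OO_r^*$, to turn $-\frac{ik^2}{4}\OO_r\OO_r^*h+\frac{ik^2}{2}\TD_rh$ into the antisymmetric Volterra term.

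You use the Volterra identity earlier, to identify the tail integral in the backward representation (normalized at $2\pi$) as $\TD_r^*h$, and then average. Forward minus backward is exactly the periodicity relation $(V_r^*-I)v=-\frac{ik}{2}\OO_r^*h$, so the two derivations are algebraically equivalent. Your signs, the triangle orientation, and the Hermitian symmetry of $K_r$ all check out.

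What your symmetrization buys is transparency. The Cayley combination $I+V_r^*$ and the antisymmetric part $\TD_r-\TD_r^*$ appear as averages of forward and backward transport. Also, $(I-V_r)^{-1}$ is only ever applied to the explicit vector $(I-V_r)V_r^*v$, giving $\KK_r\OO_r^*h=-\frac2k(I+V_r^*)v$ directly, with no operator algebra on $\ran(I-V_r)$. The paper's route is shorter because it needs no second variation-of-constants formula.

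Two small points in your Step 3 go slightly beyond the paper. The explicit bound $\|\TD_r\|_{\mathcal S_2}\le\|\gamma_r'\|_{L^2}^2$ justifies the local uniformity in $r$, which the paper only asserts. The direct check that $\langle h,i\partial_\theta h\rangle$ is real makes explicit what the paper leaves implicit.
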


 \begin{proof}
     Let $u_a$ be a Herglotz wave with boundary trace $h$, and let
     $y=(y_n)_{n\ge0}$ be its normalized derivative sequence.  By
     \eqref{d:eq:jet-parseval}, $y(\theta)\in\HH$.  Put $v=y(0)$. Since $u_a$ is globally defined and
     $\gamma_r(0)=\gamma_r(2\pi)$, its derivative sequence is
     single-valued with $y(2\pi)=y(0)=v$.
     Evaluating the formula
     \eqref{d:eq:variation-first} at $\theta=2\pi$ and using
     \eqref{d:eq:Ostar} gives
     \begin{equation}\label{d:eq:endpoint}
         (I-V_r)v
         =
         -\frac{ik}{2}V_r\OO_r^*h.
     \end{equation}

     Hence
     \begin{equation*}
         \OO_r^*h
         =
         \frac{2i}{k}(V_r^*-I)v
         =
         \frac{2i}{k}(I-V_r)V_r^*v.
     \end{equation*}
     Therefore $\OO_r^*h\in\ran(I-V_r)=\Dom\KK_r$.

     We next compute the normal derivative.  Since $r$ is nonresonant,
     the restriction of $u_a$ to $\Omega_r$ is the unique solution of the
     Dirichlet problem with boundary value $h$, and therefore
     \[
     \DD_rh=|\gamma_r'|\partial_\nu u_a.
     \]
     Combining \eqref{d:eq:normal-from-y0} and
\eqref{d:eq:y0-observation} gives
     \begin{equation}\label{d:eq:DtN-before-Cayley}
         \DD_rh
         =
         i\partial_\theta h
         -
         k\OO_rv
         +
         \frac{ik^2}{2}\TD_rh.
     \end{equation}

     From \eqref{d:eq:endpoint}, $ v = -\frac{ik}{2} (I-V_r)^{-1}V_r\OO_r^*h$,
     where the inverse acts only on its natural range.  Substitution into
     \eqref{d:eq:DtN-before-Cayley} yields
     \[
     \DD_rh
     =
     i\partial_\theta h
     +
     \frac{ik^2}{2}
     \OO_r(I-V_r)^{-1}V_r\OO_r^*h
     +
     \frac{ik^2}{2}\TD_rh.
     \]
     On $\ran(I-V_r)$, we have $ i(I-V_r)^{-1}V_r = \frac12\KK_r-\frac{i}{2}I$. Consequently,
     \[
     \DD_rh
     =
     i\partial_\theta h
     +
     \frac{k^2}{4}\OO_r\KK_r\OO_r^*h
     -
     \frac{ik^2}{4}\OO_r\OO_r^*h
     +
     \frac{ik^2}{2}\TD_rh.
     \]
     Using \eqref{d:eq:Volterra},
     the last two terms combine as
     \[
     -\frac{ik^2}{4}(\TD_r+\TD_r^*)
     +
     \frac{ik^2}{2}\TD_r
     =
     \frac{ik^2}{4}(\TD_r-\TD_r^*).
     \]
     This proves \eqref{d:eq:identity}.

     It remains to estimate the first two terms in
     \eqref{d:eq:identity}.  They are precisely the terms grouped in
     $\mathfrak r_r$.  For $h(\theta)=\sum_{n\in\mathbb Z}h_ne^{in\theta}$, we have
     \[
     \bigl|
     \langle h,i\partial_\theta h\rangle
     \bigr|
     \le
     2\pi \sum_{n\in\mathbb Z}|n|\,|h_n|^2
     \le
     C\|h\|_{H^{1/2}(\T)}^2.
     \]
     Moreover, the operator $i(\TD_r-\TD_r^*)$ is bounded and self-adjoint on $L^2(\T)$, with operator norm locally
     uniform in $r>0$.  Hence
     \[
     \left|
     \frac{ik^2}{4}
     \langle h,(\TD_r-\TD_r^*)h\rangle
     \right|
     \le
     C_I\|h\|_{L^2(\T)}^2
     \le
     C_I\|h\|_{H^{1/2}(\T)}^2.
     \]
     This proves \eqref{d:eq:remainder}.
 \end{proof}

We remark that the estimate \eqref{d:eq:remainder} remains valid also at resonant radii, because $\mathfrak r_r$ depends only on the boundary
differentiation $i\partial_\theta$ and on the operator
$\TD_r$, which is constructed directly from the unitary transport
$W_r$.  Neither the Dirichlet resolvent $A_r^{-1}$ nor the Cayley
operator $\KK_r$ enters its definition.

 Formula \eqref{d:eq:identity} now separates the two parts that will
 play different roles near a Dirichlet resonance.  In quadratic-form
 language,
 \[
 \langle h,\DD_rh\rangle
 =
 \mathfrak r_r[h]
 +
 \frac{k^2}{4}
 \langle \OO_r^*h,\KK_r\OO_r^*h\rangle.
 \]
 The first term is uniformly bounded on bounded subsets of
 $H^{1/2}(\T)$, whereas the second contains the spectral information
 of the Cayley operator.  Thus a negative divergence of the Dirichlet-to-Neumann quadratic form must arise from the Cayley term.

 We emphasize that \eqref{d:eq:identity} is asserted only for Herglotz
 traces.  Although these traces are dense in $H^{1/2}(\T)$ at a
 nonresonant radius, density alone does not allow one to extend the
 Cayley term to arbitrary boundary data, because $\KK_r$ is generally
 unbounded.  In the next subsection we therefore approximate only the
 finitely many boundary directions from Lemma~\ref{d:lem:pole} by Herglotz traces and retain the
 domain condition $\OO_r^*h\in\Dom\KK_r$ throughout.

 \subsection{From Dirichlet resonances to unitary eigenphases} 
 \label{d:sec:poles}

 Lemma~\ref{d:lem:pole} gives an $m$-dimensional space of boundary
 data on which the Dirichlet-to-Neumann quadratic form tends uniformly
 to $-\infty$ as $r \to r_*-$.  Proposition~\ref{d:prop:identity}
 shows that, for Herglotz traces,
 \[
 \langle h,\DD_r h\rangle
 =
 \mathfrak r_r[h]
 +
 \frac{k^2}{4}
 \langle \OO_r^*h,\KK_r\OO_r^*h\rangle.
 \]
 The first term is uniformly bounded on bounded subsets of
 $H^{1/2}(\T)$. Consequently, the negative divergence must also appear in the Cayley quadratic form.

The only point requiring care is that the boundary data
$h_1,\ldots,h_m$ from Lemma~\ref{d:lem:pole} need not themselves be
Herglotz traces, whereas the
 identity above has been proved only for Herglotz traces.  We therefore
 approximate these finitely many directions at each nonresonant radius.

 \begin{proposition}
     \label{d:prop:near}
     Under the assumptions of Lemma~\ref{d:lem:pole}, let $\pi<\alpha<2\pi$. Then, for every sufficiently close nonresonant radius $r<r_*$,
     the unitary operator $U_r$ has at least $m$ eigenvalues, counted with
     multiplicity, whose arguments lie in $(\alpha,2\pi)$.
 \end{proposition}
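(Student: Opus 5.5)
The argument is a min--max argument for the Cayley operator $\KK_r$. By \eqref{d:eq:cot}, eigenvalues of $U_r$ with arguments in $(\alpha,2\pi)$ correspond, with multiplicity, to eigenvalues of $\KK_r$ below $\cot(\alpha/2)$. Since $\pi<\alpha<2\pi$, this threshold is a finite negative number $-\kappa_\alpha$.

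Since $\KK_r$ is self-adjoint with compact resolvent (Lemma~\ref{d:lem:cayley}), min--max applies. It suffices to find, for each sufficiently close nonresonant $r<r_*$, an $m$-dimensional subspace $\mathcal V_r\subset\Dom\KK_r$ on which
\[
\langle\xi,\KK_r\xi\rangle<-\kappa_\alpha\|\xi\|^2\qquad(0\neq\xi\in\mathcal V_r).
\]
We build $\mathcal V_r$ as $\OO_r^*$ applied to an $m$-dimensional space of Herglotz traces, which lies in $\Dom\KK_r$ by Proposition~\ref{d:prop:identity}.

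\emph{Step 1: Herglotz approximation of the pole directions.} Fix a nonresonant $r<r_*$ close to $r_*$, and take $h_1,\dots,h_m$ and $M_r$ from Lemma~\ref{d:lem:pole}. By Lemma~\ref{d:lem:density} (pulled back to $\T$), choose Herglotz traces $\tilde h_j$ with $\|\tilde h_j-h_j\|_{H^{1/2}}<\eta$, where $\eta=\eta(r)$ is small.

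The quadratic form $h\mapsto\langle h,\DD_rh\rangle$ is continuous on $H^{1/2}(\T)$ at this fixed nonresonant $r$, because $\DD_r$ is bounded $H^{1/2}\to H^{-1/2}$. The Gram matrices of the $h_j$ in $H^{1/2}$ are fixed and nondegenerate. Hence, for $\eta$ small enough, the $\tilde h_j$ are linearly independent, $\|\tilde h_c\|_{H^{1/2}}\le C|c|$, and
\[
\langle\tilde h_c,\DD_r\tilde h_c\rangle\le-(M_r-1)|c|^2 .
\]
The approximation is made separately at each $r$, so no uniformity of $\DD_r$ in $r$ is needed.

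\emph{Step 2: transfer to the Cayley form.} Proposition~\ref{d:prop:identity} and \eqref{d:eq:remainder} give
\[
\frac{k^2}{4}\langle\OO_r^*\tilde h_c,\KK_r\OO_r^*\tilde h_c\rangle
=\langle\tilde h_c,\DD_r\tilde h_c\rangle-\mathfrak r_r[\tilde h_c]
\le-(M_r-1-C_IC^2)|c|^2 ,
\]
with $C_I$ uniform on a compact interval $I\ni r_*$. Moreover $\|\OO_r^*\tilde h_c\|\le\|\OO_r\|\,\|\tilde h_c\|_{L^2}\le C'|c|$, with $C'$ uniform in $r\in I$. Therefore the Rayleigh quotient of $\KK_r$ on $\OO_r^*\tilde h_c$ is at most $-\tfrac{4}{k^2C'^2}(M_r-1-C_IC^2)$, which tends to $-\infty$. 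In particular it lies below $-\kappa_\alpha$ once $r$ is close to $r_*$.

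\emph{Step 3: the dimension count (main obstacle).} We must show that $c\mapsto\OO_r^*\tilde h_c$ is injective. The displayed inequality does this automatically: if $\OO_r^*\tilde h_c=0$, the left side vanishes, while the right side is strictly negative for $c\ne0$ once $M_r$ is large. Hence $\mathcal V_r=\{\OO_r^*\tilde h_c\}$ is $m$-dimensional, and min--max yields at least $m$ eigenvalues of $\KK_r$ below $-\kappa_\alpha$.

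The delicate points are two. First, the constants $C$, $C'$, $C_I$ must be independent of the approximation and of $r$; this holds because $\|\tilde h_c\|_{H^{1/2}}$ is controlled by the fixed $h_j$. Second, the min--max principle must be applied to an unbounded self-adjoint operator, using only test vectors in $\Dom\KK_r$; this is legitimate, since the form domain contains $\Dom\KK_r$. Translating back through \eqref{d:eq:cot} gives at least $m$ eigenvalues of $U_r$ with arguments in $(\alpha,2\pi)$.
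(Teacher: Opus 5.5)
Your proposal is correct and follows the paper's proof essentially step for step: Herglotz approximation of the pole directions at each fixed nonresonant $r$, with the error tuned to $\|\DD_r\|$; transfer through the Cayley--DtN identity using the uniformly bounded remainder $\mathfrak r_r$ and the locally uniform bound on $\|\OO_r^*\|$; injectivity of $c\mapsto\OO_r^*\tilde h_c$ read off from the strictly negative form; and the Cayley--phase correspondence \eqref{d:eq:cot}. The only cosmetic difference is in the last step: where you invoke min--max, the paper argues directly that $\rank\mathbf 1_{(-\infty,-D)}(\KK_r)\ge m$, since a nonzero vector of the $m$-dimensional test space orthogonal to that range would have Rayleigh quotient $\ge -D$; this is exactly the statement you need, and it does not require $\KK_r$ to be semibounded.
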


 \begin{proof}
     Define
     \[
     J_0:\mathbb C^m\longrightarrow H^{1/2}(\T),
     \qquad
     J_0c= h_c:=\sum_{j=1}^m c_jh_j.
     \]
     By Lemma~\ref{d:lem:pole},
     \begin{equation}\label{d:eq:pole-J0}
         \langle J_0c,\DD_rJ_0c\rangle
         \le
         -M_r|c|^2,
         \qquad
         M_r\longrightarrow+\infty
         \quad(r \to r_*-).
     \end{equation}

     Fix for the moment a nonresonant $r<r_*$.  Since Herglotz traces
     are dense in $H^{1/2}(\T)$ by Lemma~\ref{d:lem:density}, we may
     choose Herglotz traces $\widetilde h_{r,1},\ldots,\widetilde h_{r,m}$ arbitrarily close to $h_1,\ldots,h_m$.  Set
     \[
     J_rc
     :=
     \sum_{j=1}^m c_j\widetilde h_{r,j}.
     \]
     Because the space $\mathbb C^m$ is finite dimensional, the
     approximations may be chosen so that
     \begin{equation}\label{d:eq:Jr-approx}
         \|J_r-J_0\|_{\mathbb C^m\to H^{1/2}(\T)}
         \le\delta_r,
     \end{equation}
     where $0<\delta_r\le1$ is chosen so that
     \begin{equation}\label{d:eq:delta-choice}
         \|\DD_r\|\,
         \delta_r\bigl(2\|J_0\|+1\bigr)
         \le1.
     \end{equation}

     In other words, although the norm $\|\DD_r\|$ may become large as $r \to r_*-$,  for each fixed nonresonant $r$, we may
     choose the Herglotz approximation such that $\delta_r$ is as small as we want.

We first show that the negative divergence in
\eqref{d:eq:pole-J0} persists under this approximation.
     Put
     \[
      e_c:=(J_r-J_0)c.
     \]
     Using the Hermitian symmetry of $\DD_r$,
     \begin{align*}
         \left|
         \langle J_rc,\DD_rJ_rc\rangle
         -
         \langle J_0c,\DD_rJ_0c\rangle
         \right|  \le
         \|\DD_r\|\,
         \|e_c\|_{H^{1/2}}
         \left(
         2\|J_0c\|_{H^{1/2}}
         +
         \|e_c\|_{H^{1/2}}
         \right).
     \end{align*}
     By \eqref{d:eq:Jr-approx}--\eqref{d:eq:delta-choice}, the right-hand
     side is at most $|c|^2$.  Hence
     \begin{equation}\label{d:eq:Herglotz-pole}
         \langle J_rc,\DD_rJ_rc\rangle
         \le
         -(M_r-1)|c|^2.
     \end{equation}
     Moreover,
     \begin{equation}\label{d:eq:Jr-bound}
         \|J_rc\|_{H^{1/2}(\T)}
         \le
         C_0|c|
     \end{equation}
     with $C_0$ independent of $r$ near $r_*$.

     Every $J_rc$ is again a Herglotz trace, being a finite linear
     combination of Herglotz traces.  We may therefore apply
     Proposition~\ref{d:prop:identity}.  Define
     \begin{equation*}
         x_{r,c}:=\OO_r^*J_rc.
     \end{equation*}
     By Proposition~\ref{d:prop:identity}, $x_{r,c}\in\Dom\KK_r$.
     Taking the quadratic form of \eqref{d:eq:identity}, we have
     \begin{equation}\label{d:eq:Cayley-quadratic}
         \langle J_rc,\DD_rJ_rc\rangle
         =
         \mathfrak r_r[J_rc]
         +
         \frac{k^2}{4}
         \langle x_{r,c},\KK_rx_{r,c}\rangle.
     \end{equation}

     Since $r_*>0$, we may restrict $r$ to a fixed compact interval
     contained in $(0,1]$ and containing all sufficiently close
     $r<r_*$.  By \eqref{d:eq:remainder} and
     \eqref{d:eq:Jr-bound},
     \[
     |\mathfrak r_r[J_rc]|
     \le C|c|^2
     \]
     with $C$ independent of such $r$.  Combining this with
     \eqref{d:eq:Herglotz-pole} and
     \eqref{d:eq:Cayley-quadratic} gives
     \begin{equation}\label{d:eq:Cayley-negative}
         \langle x_{r,c},\KK_rx_{r,c}\rangle
         \le
         -\frac{4}{k^2}(M_r-C_1)|c|^2
     \end{equation}
     for a constant $C_1$ independent of $r$.

     We also have a uniform upper bound for the norm of $x_{r,c}$.
     Indeed,
     \[
     \|x_{r,c}\|
     \le
     \|\OO_r^*\|\,\|J_rc\|_{L^2(\T)}
     \le
     C_2|c|,
     \]
     because $\OO_r$ is locally uniformly bounded in $r$ and
     $H^{1/2}(\T)\hookrightarrow L^2(\T)$.  Thus
     \begin{equation}\label{d:eq:x-bound}
         \|x_{r,c}\|\le C_2|c|.
     \end{equation}

     For $r$ sufficiently close to $r_*$ we have $M_r>C_1$.
     Then the map
     \[
     c\longmapsto x_{r,c}
     \]
     is injective.  Indeed, if $c\ne0$ but $x_{r,c}=0$, the left-hand side
     of \eqref{d:eq:Cayley-negative} would be zero whereas the right-hand
     side would be strictly negative, a contradiction.  Therefore
     \[
     E_r
     :=
     \{x_{r,c}:c\in\mathbb C^m\}
     \]
     is an $m$-dimensional subspace of $\Dom\KK_r$.

     Combining \eqref{d:eq:Cayley-negative} with
     \eqref{d:eq:x-bound}, every nonzero $x\in E_r$ satisfies
     \begin{equation}\label{d:eq:Cayley-Rayleigh}
         \frac{\langle x,\KK_rx\rangle}{\|x\|^2}
         \le
         -\frac{4(M_r-C_1)}{k^2C_2^2}.
     \end{equation}
     The right-hand side tends to $-\infty$ as $r \to r_*-$.

     We now convert this $m$-dimensional negative subspace into a spectral
     statement.  Fix $D>0$.  For $r$ sufficiently close to $r_*$,
     \eqref{d:eq:Cayley-Rayleigh} gives
     \[
     \langle x,\KK_rx\rangle<-D\|x\|^2
     \qquad
     (0\ne x\in E_r).
     \]
     Let $P_{r,D}:=\mathbf 1_{(-\infty,-D)}(\KK_r)$ be the spectral projection of $\KK_r$ onto $(-\infty,-D)$.
     We claim that
     \[
     \rank P_{r,D}\ge m.
     \]
     Otherwise $\rank P_{r,D}<m=\dim E_r$, so there would exist a nonzero vector $x\in E_r\cap\ker P_{r,D}$.
     The spectral support of such an $x$ lies in $[-D,\infty)$, and hence
     the spectral theorem gives
     \[
     \langle x,\KK_rx\rangle
     \ge
     -D\|x\|^2,
     \]
     contradicting the preceding strict inequality.  Thus $\KK_r$ has at
     least $m$ eigenvalues in $(-\infty,-D)$, counted with multiplicity.

     Finally choose $D>0$ so large that $
     -D<\cot(\alpha/2)$.
     By the Cayley--phase correspondence \eqref{d:eq:cot}, an eigenvalue
    $\lambda=\cot(\theta/2)<-D$ corresponds to an eigenphase satisfying
     $\theta>\alpha$,      because $\theta\mapsto\cot(\theta/2)$ is strictly decreasing on
     $(0,2\pi)$.  Hence $U_r$ has at least $m$ eigenvalues with arguments in $(\alpha,2\pi)$, counted with multiplicity.
 \end{proof}

The correspondence preserves multiplicity: an $m$-dimensional Dirichlet eigenspace gives $m$ independent boundary directions on which the Dirichlet-to-Neumann quadratic form diverges negatively; Herglotz approximation makes the Cayley identity applicable; and, for every $D>0$, the Cayley operator has at least $m$ eigenvalues below $-D$ whenever $r<r_*$ is sufficiently close to $r_*$, corresponding to eigenphases of $U_r$ approaching $2\pi$.

\section{Proof of the strict Dirichlet P\'olya inequality}
\label{d:sec:crossing-count}

\subsection{Eigenphase crossings and the Ky Fan estimate}
\label{d:sec:phase}

Proposition~\ref{d:prop:near} shows that an $m$-fold Dirichlet
resonance produces at least $m$ eigenphases of the unitary path $U_r$
arbitrarily close to $2\pi$.  We now convert this qualitative
information into a quantitative lower bound for the amount of phase
motion required to produce these crossings.

The key input is the positive radial generator
\[
L_{r,k}=-iU_r^*\partial_rU_r\ge0.
\]
Since the energy $k^2$ is fixed throughout this subsection, we write $L_r:=L_{r,k}$ throughout this subsection. For a differentiable local eigenphase branch $e^{i\theta_j(r)}$ with a corresponding normalized eigenvector $v_j(r)$, the first-variation formula gives 
\[
\theta_j'(r)
=
\langle v_j(r),L_rv_j(r)\rangle.
\]
Thus $L_r$ measures the instantaneous angular velocity of the
unitary spectrum.  If at a given radius we follow at most $m$
orthogonal phase directions, Lemma~\ref{d:lem:KyFan} controls
their total instantaneous velocity by the sum of the $m$ largest
eigenvalues of $L_r$.

Recall that, for a positive trace-class operator $A$,
$\Phi_m(A)$ denotes the sum of its $m$ largest
eigenvalues, counted with multiplicity, and
$\tau_m(A)=\tr A-\Phi_m(A)$, as defined in
\eqref{d:eq:spectral-tail}.

The following variational principle is due to
Ky Fan.
\begin{lemma}[\cite{Fan}]
    \label{d:lem:KyFan}
    Let $A$ be a positive compact self-adjoint operator on a separable
    Hilbert space $\HH$.  Then, for every $m\ge1$,
    \begin{equation}\label{d:eq:fan-variational}
        \Phi_m(A)
        =
        \sup_{\substack{P=P^*=P^2\\ \rank P=m}}
        \tr(PA).
    \end{equation}
    In particular, if $v_1,\ldots,v_q$ are orthonormal and $q\le m$,
    then
    \begin{equation}\label{d:eq:fan-upper}
        \sum_{j=1}^q\langle v_j,Av_j\rangle
        \le\Phi_m(A).
    \end{equation}
\end{lemma}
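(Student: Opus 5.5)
We prove the Ky Fan identity \eqref{d:eq:fan-variational} by diagonalizing $A$ and reducing $\tr(PA)$ to a linear optimization over a doubly substochastic weight sequence. By the spectral theorem for compact self-adjoint operators, write
\[
A=\sum_{n\ge1}\alpha_n\langle w_n,\cdot\rangle w_n,
\qquad \alpha_1\ge\alpha_2\ge\cdots\ge0,
\]
with $(w_n)$ orthonormal. Complete $(w_n)$ by an orthonormal basis of $\ker A$ if necessary, and if $A$ has only finitely many nonzero eigenvalues, pad the sequence with zeros. Then $\Phi_m(A)=\sum_{n\le m}\alpha_n$.

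For the lower bound, take $P$ to be the projection onto $\operatorname{span}\{w_1,\dots,w_m\}$. Then $\tr(PA)=\sum_{n\le m}\alpha_n$, so the supremum is attained and is at least $\Phi_m(A)$. This requires $\dim\HH\ge m$, which we assume implicitly; in the application $\HH=\ell^2$ is infinite dimensional.

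For the upper bound, let $P$ be any rank-$m$ orthogonal projection. Since $P$ has finite rank, $PA$ is trace class, and cyclicity gives $\tr(PA)=\tr(PAP)$. Using the eigenbasis,
\[
\tr(PA)=\sum_n\alpha_n\langle w_n,Pw_n\rangle=\sum_n\alpha_n t_n,
\qquad t_n:=\|Pw_n\|^2.
\]
The interchange of sums is justified because every term is nonnegative: expand $\tr(PAP)$ in an orthonormal basis of $\ran P$ and use Tonelli. The weights satisfy $0\le t_n\le1$ and $\sum_n t_n\le\tr P=m$, by Bessel's inequality applied to an orthonormal basis of $\ran P$.

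The key step is the elementary rearrangement bound
\[
\sum_n\alpha_n t_n-\sum_{n\le m}\alpha_n
=\sum_{n\le m}\alpha_n(t_n-1)+\sum_{n>m}\alpha_nt_n
\le\alpha_m\Bigl(\sum_{n\le m}(t_n-1)+\sum_{n>m}t_n\Bigr)\le0.
\]
The first inequality holds because $t_n-1\le0$ and $\alpha_n\ge\alpha_m$ for $n\le m$, while $t_n\ge0$ and $\alpha_n\le\alpha_m$ for $n>m$. The second holds because $\sum_n t_n\le m$. Together with the lower bound, this proves \eqref{d:eq:fan-variational}.

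For \eqref{d:eq:fan-upper}, suppose $q\le m$ and $v_1,\dots,v_q$ are orthonormal. Let $P_q$ be the projection onto their span. Then $\sum_{j\le q}\langle v_j,Av_j\rangle=\tr(P_qA)$. The upper-bound argument applies verbatim with $\sum_nt_n\le q\le m$, since the rearrangement bound only used $\sum_n t_n\le m$. Hence $\tr(P_qA)\le\Phi_m(A)$. Alternatively, one can enlarge $\{v_j\}$ to an orthonormal family of size $m$ and use $A\ge0$, since the added terms are nonnegative.

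The only point requiring care is the justification of the trace identity and the sum interchange in infinite dimensions, in particular when $\ker A$ is nontrivial. Positivity of $A$ and finite rank of $P$ make this routine.
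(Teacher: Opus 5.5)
Your proof is correct. The paper gives no proof of this lemma and only cites Fan, so there is no competing argument to compare against. Your route is the standard self-contained one: reduce $\tr(PA)$ to $\sum_n\alpha_n t_n$ with $t_n=\|Pw_n\|^2\in[0,1]$ and $\sum_n t_n\le\rank P$, then apply the rearrangement bound.

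It has three advantages worth noting:
\begin{itemize}
\item It works directly for positive compact operators on an infinite-dimensional $\HH$, which is the setting the paper actually needs; Fan's original statement concerns Hermitian matrices.
\item It shows the supremum in \eqref{d:eq:fan-variational} is attained.
\item It gives \eqref{d:eq:fan-upper} directly from $\sum_n t_n\le q\le m$, without enlarging the orthonormal family.
\end{itemize}

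One harmless simplification: you do not need to complete $(w_n)$ by an orthonormal basis of $\ker A$. Bessel's inequality applied to the eigenvectors with nonzero eigenvalues already gives $\sum_n t_n\le m$. Zeros are needed only to define $\Phi_m(A)$ when $A$ has fewer than $m$ positive eigenvalues. When there are infinitely many positive eigenvalues, the completed list cannot be indexed in nonincreasing order. Your inequality survives anyway, because it uses only $\alpha_n\ge\alpha_m$ for $n\le m$ and $\alpha_n\le\alpha_m$ for $n>m$.
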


We shall use the following consequence: for positive trace-class operators $A,B$,
\begin{equation}\label{d:eq:fan-continuity}
    |\Phi_m(A)-\Phi_m(B)|
    \le
    \|A-B\|_{\mathcal S_1}
\end{equation}
which follows from
\eqref{d:eq:fan-variational} and $|\tr(P(A-B))|
\le\|A-B\|_{\mathcal S_1}$. Hence
\begin{equation}\label{d:eq:tail-continuity}
    |\tau_m(A)-\tau_m(B)|
    \le
    2\|A-B\|_{\mathcal S_1}.
\end{equation}

The Ky Fan principle has played important role in several recent works, e.g.  \cite{STW26}. Its role in the current paper is to compare the partial sum $\Phi_m(L_r)$ with the full trace of $L_r$. The latter satisfies the area identity 
\[
\int_0^1\tr L_r\,dr
=
\frac{k^2}{2}|\Omega|.
\]
For any orthonormal family of at most $m$ eigenvectors of $U_r$, the sum of their instantaneous angular velocities is bounded above by $\Phi_m(L_r)$. Integrating this bound yields the estimate for the number of eigenphase crossings. 
The difference
\[
\tau_m(L_r)=\tr L_r-\Phi_m(L_r)
\]
is strictly positive, since $L_r$ is positive, injective,
and trace class on the infinite-dimensional space $\ell^2$.
The counting argument uses only $\Phi_m(L_r)$, whereas
the area identity involves the full trace $\tr L_r$.
This positive remainder is the key to obtaining the
strict P\'olya inequality.

We now establish the estimate needed to count
the eigenphase crossings. The qualitative ingredients
are standard.  Local eigenphase branches and their first-order
variation follow from analytic perturbation theory
\cite[Chapters~II and VII]{Kato}.  Spectral flow for unitary paths
which are compact or Schatten-class perturbations of the identity is
developed, in particular, in
\cite[Section~3]{Pushnitski2001} for the unitary setting; see also \cite{Phillips1996} for general spectral-flow background in the self-adjoint Fredholm setting.

What is needed here is the following quantitative refinement:
the total multiplicity of any selected positive crossings is controlled by the integral of the Ky Fan $m$-term partial sum of the positive generator. This partial-trace estimate is the ingredient
that preserves the positive tail $\tau_m$.

We write $\mathcal B(\HH)$ for the bounded linear
operators on $\HH$. For $0<\alpha<2\pi$, by a \emph{crossing of $e^{i\alpha}$} we mean
a time $t$ such that $e^{i\alpha}\in\sigma(U(t))$.
Its multiplicity is the dimension of the corresponding eigenspace. 

\begin{lemma}
    \label{d:lem:phase}
    Let $U:[0,b]\longrightarrow\mathcal B(\HH)$ be a continuously differentiable unitary path satisfying $U(0)=I$.
    Assume that $L(t):=-iU(t)^*U'(t)$
    is positive, trace class, and trace-norm continuous, and that
    \begin{equation}\label{d:eq:strict-generator}
        \langle v,L(t)v\rangle>0
        \qquad(v\ne0,\ t>0).
    \end{equation}
    Assume moreover that $U$ is locally real analytic in operator norm
    for $t>0$.  Put $B:=\int_0^b\tr L(t)\,dt$.
    Then $U(t)-I$ is trace class for every $t$, and the following hold.

    \smallskip
    \noindent
    \textup{(i)}
    For every $0<\alpha<2\pi$, all crossings of $e^{i\alpha}$ in
    $(0,b)$ are positive and their total multiplicity $c_\alpha$ is
    finite.  For every integer $m$ with
    $1\le m\le c_\alpha$,
    \begin{equation}\label{d:eq:fan-cost}
        \alpha m
        \le
        \int_0^b\Phi_m(L(t))\,dt.
    \end{equation}
    In particular,
    \begin{equation}\label{d:eq:cost}
        \alpha c_\alpha\le B.
    \end{equation}

    \smallskip
    \noindent
    \textup{(ii)}
    For each fixed $t$, the spectrum of $U(t)$ in the open lower
    semicircle $\{e^{i\theta}:\pi<\theta<2\pi\}$ has finite total multiplicity.

    \smallskip
    \noindent
    \textup{(iii)}
    Let $0<a<b_1\le b$, $\pi<\alpha<2\pi$.
    Suppose that $U(a)$ has no spectrum with argument in
    $[\alpha,2\pi)$, that $e^{i\alpha}\notin\sigma(U(b_1))$,
    and that $U(b_1)$ has at least $m$ eigenvalues, counted with
    multiplicity, whose arguments lie in $(\alpha,2\pi)$.  Then
    $e^{i\alpha}$ is crossed at least $m$ times, counted with
    multiplicity, in $(a,b_1)$.
\end{lemma}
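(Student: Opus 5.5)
Trace class of $U(t)-I$ comes first. Integrating $U'=iUL$ from $U(0)=I$ in trace norm, as in the proof of Proposition~\ref{d:prop:radial}, gives $U(t)-I=i\int_0^tU(s)L(s)\,ds\in\mathcal S_1$. Hence, for each $t$, every spectral point $e^{i\theta}\ne1$ of $U(t)$ is an isolated eigenvalue of finite multiplicity, and these eigenvalues can accumulate only at $1$.

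\textbf{Part (ii).} Since $\sum_j|e^{i\theta_j}-1|\le\|U(t)-I\|_{\mathcal S_1}<\infty$ (Weyl's inequality for normal compact operators) and $|e^{i\theta}-1|\ge\sqrt2$ on the lower semicircle, the total multiplicity there is at most $\|U(t)-I\|_{\mathcal S_1}/\sqrt2$.

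\textbf{Part (i), positivity of crossings.} Near a crossing time $t_0>0$ of $e^{i\alpha}$ I use analyticity. The Riesz projection of $U(t)$ onto a small arc around $e^{i\alpha}$ is analytic, and Kato's reduction yields analytic eigenphase branches $\theta_j(t)$ with $\theta_j'(t_0)=\langle v_j,L(t_0)v_j\rangle>0$ by \eqref{d:eq:strict-generator}. So each branch crosses $\alpha$ strictly increasingly, and the crossing times are isolated in $(0,b)$. Near $t=0$ nothing reaches $\alpha$, since $\|U(t)-I\|\to0$.

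\textbf{Part (i), finiteness and \eqref{d:eq:fan-cost}.} This is the main obstacle. My plan is a finite-rank approximation combined with a monotone labelling of eigenphases. I would cut $(-i\log U(t))$ using the unitary spectral-flow formalism (cf.\ Pushnitski), and then approximate: let $P_N$ be finite-rank projections, set $L_N(t)=P_NL(t)P_N+\varepsilon(I-P_N)$-type truncations, and solve $U_N'=iU_NL_N$. Take $L_N(t)\to L(t)$ in trace norm, uniformly in $t$, with $L_N$ supported in $\ran P_N$, so that $U_N-I$ lives on $\ran P_N$. Then $U_N\to U$ uniformly in operator norm, by Gronwall.

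In finite dimension $n$, lift the eigenphases continuously to $0\le\theta_1(t)\le\dots\le\theta_n(t)$ from $\theta_j(0)=0$; these are nondecreasing, because the generator is positive. Each crossing of $\alpha$ increases by one the number of lifted phases above $\alpha$. After $m$ crossings there are distinct labels $j_1,\dots,j_m$ with $\theta_{j_\ell}$ rising from $0$ to at least $\alpha$. Where the lifted phases are differentiable, their derivatives are $\langle v_j,L_Nv_j\rangle$ for an orthonormal eigenbasis, so Lemma~\ref{d:lem:KyFan} gives
\[
\alpha m\le\int_0^b\sum_{\ell}\theta_{j_\ell}'\,dt\le\int_0^b\Phi_m(L_N)\,dt .
\]
At nondifferentiable points I would use Lipschitz/a.e.\ differentiability of ordered lifted phases via Lidskii-type majorization, $\sum_{\ell\in S}(\theta_\ell(t+h)-\theta_\ell(t))\le\int_t^{t+h}\Phi_{|S|}(L_N)$.

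To pass to the limit, isolated crossings of $U$ persist for $U_N$ with multiplicity, by stability of Riesz projections near crossings where the phase moves strictly. Combined with \eqref{d:eq:fan-continuity}, this gives \eqref{d:eq:fan-cost} for $U$. Taking $m=c_\alpha$ if finite, and otherwise arbitrary $m$ to contradict $\int\Phi_m\le B$, yields finiteness and \eqref{d:eq:cost}.

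\textbf{Part (iii).} This is a spectral-flow count. The number $n(t)$ of eigenvalues with argument in $(\alpha,2\pi)$ is finite by (ii). Since all phases move positively and $1$ is never reached from below by a finite-multiplicity branch that was not above $\alpha$, $n(t)$ can change only at crossings of $e^{i\alpha}$ (increasing by the crossing multiplicity) or at exits through $1$ (decreasing). From $n(a)=0$ and $n(b_1)\ge m$, with $e^{i\alpha}$ not an eigenvalue at the endpoints, the number of entries is at least $m$. Rigorously, I would apply the same finite-rank lifted-phase argument: the entries into $(\alpha,2\pi)$ equal the crossings at $\alpha$, and these dominate the net change.
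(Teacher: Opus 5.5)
\paragraph{Part (ii).} Your proof of (ii) is wrong, and this is a genuine gap. The open lower semicircle $\{e^{i\theta}:\pi<\theta<2\pi\}$ has $1$ as a boundary point, since $|e^{i\theta}-1|=2\sin(\theta/2)\to0$ as $\theta\to2\pi^-$. So the bound $|e^{i\theta}-1|\ge\sqrt2$ holds only for $\theta\le3\pi/2$.

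In fact no argument using only $U(t)-I\in\mathcal S_1$ can work. If $K\ge0$ is trace class and injective, then $e^{-iK}-I\in\mathcal S_1$, yet $e^{-iK}$ has infinitely many eigenvalues $e^{-i\kappa_j}$ in the lower semicircle, accumulating at $1$.

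The missing idea is that (ii) follows from positivity together with $U(0)=I$. Each eigenvalue in the lower semicircle sits on a nondecreasing lifted phase that started at $0$ and now exceeds $\pi$, and this costs at least $\pi$ of $\int_0^t\tr L$. The paper proves this at finite rank, $\pi N\le\sum_j\theta_{n,j}(t)=\int_0^t\tr L_n(s)\,ds$. It then transfers the $N$ selected eigenvalues (isolated and away from $1$) to $U_n(t)$ by Riesz-projection stability, which gives $\pi N\le B$.

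The gap propagates. Your (iii) takes finiteness of $n(t)$ from (ii). In Proposition~\ref{d:prop:remainder}, (ii) is exactly what allows the choice of $\alpha_0$ with no spectrum of $U_{a_\ell,k}$ in $[\alpha_0,2\pi)$.

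\paragraph{Parts (i) and (iii).} Your outline for (i) otherwise follows the paper's route, but two further steps need repair.

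In (i), you claim that each crossing of $e^{i\alpha}$ raises by one the number of lifted phases above $\alpha$. This confuses crossings of $e^{i\alpha}$ with passages of a lift through the single level $\alpha$. A lift may wind around and meet $e^{i\alpha}$ again at the levels $\alpha+2\pi p$, so the $m$ crossings need not lie on $m$ distinct branches. The repair is the paper's. If branch $j$ carries $q_j$ selected crossings, then $\theta_j(b)\ge\alpha+2\pi(q_j-1)\ge\alpha q_j$. Since at most $m$ branches are involved, \eqref{d:eq:fan-upper} still bounds their summed velocities by $\Phi_m$. Also drop the $\varepsilon(I-P_N)$ term, which is not trace class.

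In (iii), you assert that $n(t)$ changes only at crossings of $e^{i\alpha}$ or at exits through $1$. This is unsupported exactly at the accumulation point $1$, where analytic perturbation theory is unavailable, and nothing you wrote rules out eigenvalues entering $(\alpha,2\pi)$ out of $1$. A finite-rank version on the arc $(\alpha,2\pi)$ has the same defect, because $n_N(a)=0$ is not stable under norm approximation: $1\in\sigma(U(a))$, so the approximants may have eigenvalues just below $1$.

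The paper avoids $1$ altogether. Using (ii), it chooses $\beta\in(\alpha,2\pi)$ above the $m$ eigenvalues of $U(b_1)$, with $e^{i\beta}$ outside the endpoint spectra. It then balances $n_{(\alpha,\beta)}(b_1)-n_{(\alpha,\beta)}(a)=c_\alpha-c_\beta$ on an arc separated from $1$, where local analytic branches are available.
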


\begin{proof}
    We separate the quantitative Ky Fan argument from the standard
    spectral-flow facts.
   First note that $U'=iUL$ and the trace-norm continuity of $L$ give
    \[U(t)-I=i\int_0^t U(s)L(s)\,ds\in\mathcal S_1,\]
    where the integral is a Bochner integral in $\mathcal S_1$. 

    \medskip
    \noindent
    \emph{Step 1: positivity of the local crossings.}
Since $U(t)-I$ is trace class, every spectral point of $U(t)$
different from $1$ is an isolated eigenvalue of finite multiplicity.

    By analytic perturbation theory
    \cite[Chapters~II and VII]{Kato}, a spectral cluster separated from
    $1$ admits local analytic eigenphase branches.  For a simple branch
    \[
    U(t)v(t)=e^{i\theta(t)}v(t),
    \qquad
    \|v(t)\|=1,
    \]
    differentiation gives
    \begin{equation}\label{d:eq:phase-velocity}
        \theta'(t)
        =
        \langle v(t),L(t)v(t)\rangle.
    \end{equation}
    At a multiple eigenvalue, the analogue of
\eqref{d:eq:phase-velocity} states that the derivatives of the local
eigenphase branches are the eigenvalues of the compression of $L(t)$
to the corresponding eigenspace. By \eqref{d:eq:strict-generator}, this compression is positive definite.  Hence every crossing away from $1$ is locally isolated and occurs in the increasing angular direction.

    \medskip
    \noindent
    \emph{Step 2: the finite-dimensional Ky Fan estimate.}
    Consider first a finite-dimensional path satisfying the same
    differential equation.  
    Rellich--Kato perturbation theory gives local real-analytic eigenphase parameterizations, including through multiple eigenvalues after relabeling. At a multiple eigenvalue the first derivatives are the eigenvalues of the compression of $L(t)$ to the corresponding eigenspace. Under \eqref{d:eq:strict-generator} that compression is positive definite, so every local phase branch moves strictly forward. To pass from the local branches to global lifts, follow each eigenvalue continuously through collisions and, whenever a branch crosses the cut at $1$, increase its argument by $2\pi$. On every compact subinterval of $(0,b]$ the local analytic parameterizations can be patched in this way; continuity of $U(t)$ at $t=0$ then fixes the normalization. Standard absolute-continuity of eigenvalues of a $C^1$ finite-dimensional normal path gives global continuous, nondecreasing, absolutely continuous lifts
    \[
    e^{i\theta_j(t)},
    \qquad
    \theta_j(0)=0.
    \]
    At almost every $t$ the lifts are differentiable and an orthonormal eigenbasis $v_j(t)$ can be chosen compatibly; the first-variation formula then gives
    \[
    \theta_j'(t)
    =
    \langle v_j(t),L(t)v_j(t)\rangle.
    \]

    Select any $m$ crossings of $e^{i\alpha}$, counted with
    multiplicity.  A single lifted branch may contain more than one of
    the selected crossings.  Let $q_j$ denote the number of selected
    crossings on the $j$th branch, and let $J:=\{j:q_j>0\}$.
    Then
    \[
    \sum_{j\in J}q_j=m,
    \qquad
    |J|\le m.
    \]
    If a nondecreasing lifted phase contains $q_j$ crossings of
    $e^{i\alpha}$, then by time $b$ it has accumulated at least $
    \alpha+2\pi(q_j-1)
    \ge
    \alpha q_j$,
    since $\alpha<2\pi$. Therefore
    \begin{equation*}
        \alpha m
        \le
        \sum_{j\in J}\theta_j(b).
    \end{equation*}

    At almost every time the eigenvectors associated with the branches
    in $J$ form an orthonormal family of cardinality at most $m$.
    The Ky Fan bound \eqref{d:eq:fan-upper} therefore gives
    \[
    \sum_{j\in J}\theta_j'(t)
    \le
    \Phi_m(L(t)).
    \]
    After integration,
    \begin{equation}\label{d:eq:finite-fan-cost}
        \alpha m
        \le
        \int_0^b\Phi_m(L(t))\,dt.
    \end{equation}
    This is the quantitative estimate that will be used below.

    \medskip
    \noindent
    \emph{Step 3: passage to the infinite-dimensional path.}
    Choose an increasing sequence of finite-rank orthogonal projections $P_n$ with $P_n\to I$ strongly and put $L_n(t):=P_nL(t)P_n$.
    Let $U_n$ solve
    \[
    U_n'=iU_nL_n,
    \qquad
    U_n(0)=I
    \]
    on $P_n\HH$, and extend it by the identity on $P_n^\perp\HH$.
     For every $0\ne v\in P_n\HH$ and $t>0$,
     \[
        \langle v,L_n(t)v\rangle
        =\langle v,L(t)v\rangle>0,
    \]
     so each finite-dimensional approximating path satisfies the same strict positivity condition on its nontrivial finite-dimensional part.

    Since $t\mapsto L(t)$ has compact image in $\mathcal S_1$,
    strong convergence $P_n\to I$ implies
    \begin{equation}\label{d:eq:Ln-uniform}
        \sup_{0\le t\le b}
        \|L_n(t)-L(t)\|_{\mathcal S_1}
        \longrightarrow0.
    \end{equation}
    Moreover,
    \[
    \frac{d}{dt}(U_nU^*)
    =
    iU_n(L_n-L)U^*,
    \]
    so
    \begin{equation}\label{d:eq:approx}
        \sup_{0\le t\le b}
        \|U_n(t)-U(t)\|
        \le
        \int_0^b
        \|L_n(t)-L(t)\|_{\mathcal S_1}\,dt
        \longrightarrow0.
    \end{equation}

   We now verify crossing stability in the form needed here. Let $t_0$ be a crossing of $e^{i\alpha}$ of multiplicity $q$. By Step~1 and local analyticity, choose $t_-<t_0<t_+$ and $\eta>0$ so that the closed spectral arc
    \[
       \mathcal A:=\{e^{i\theta}:\alpha-\eta\le\theta\le\alpha+\eta\}
    \]
   is disjoint from $1$, its two endpoints stay in the resolvent set of $U(t)$ for $t\in[t_-,t_+]$, and the Riesz projection of $U(t)$ onto $\mathcal A$ has constant rank $q$. At $t_-$ all $q$ eigenvalues in this cluster lie on the $\theta<\alpha$ side, whereas at $t_+$ all lie on the $\theta>\alpha$ side. Uniform operator-norm convergence in \eqref{d:eq:approx} implies, for all sufficiently large $n$, equality of the corresponding Riesz-projection ranks for $U_n(t_-)$ and $U_n(t_+)$ and preserves these two side counts. Since the finite-dimensional generator $L_n$ is positive definite on $P_n\HH$, its eigenphases can cross $e^{i\alpha}$ only in the positive direction. Continuity therefore forces at least $q$ crossings of $e^{i\alpha}$ by $U_n$ on $(t_-,t_+)$. This is exactly the local spectral-projection description of unitary spectral flow; see \cite[Section~3]{Pushnitski2001}.

     For a finite collection of crossings, first group crossings occurring at the same time and then choose pairwise disjoint neighborhoods of the distinct crossing times. Applying the preceding argument simultaneously to those finitely many neighborhoods shows that, if $U$ has at least $m$ selected crossings, then for all sufficiently large $n$ the finite-dimensional path $U_n$ has at least $m$ corresponding positive crossings. Applying
    \eqref{d:eq:finite-fan-cost} and using
    \eqref{d:eq:fan-continuity} together with
    \eqref{d:eq:Ln-uniform}, we obtain
    \[
    \alpha m
    \le
    \int_0^b\Phi_m(L(t))\,dt.
    \]
    This proves \eqref{d:eq:fan-cost} for every finite collection of
    crossings.

    The same estimate also shows that the total crossing
    multiplicity is finite. Indeed, since
    \[
    \Phi_m(L(t))\le\tr L(t),
    \]
    every finite collection of $m$ crossings satisfies $\alpha m\le B$.
    Hence
    \[
    c_\alpha\le\frac{B}{\alpha}<\infty,
    \]
    and \eqref{d:eq:cost} follows.  This proves \textup{(i)}.

    \medskip
    \noindent
    \emph{Step 4: fixed-time finiteness and crossing balance.}
    For a fixed $t$, consider any finite collection of $N$ eigenvalues
    of $U(t)$ in the open lower semicircle.  By
    \eqref{d:eq:approx}, the approximating path $U_n(t)$ has, for large
    $n$, the same number of nearby eigenvalues there.  Their
    nondecreasing phase lifts have values greater than $\pi$.   If $d_n=\rank P_n$ and $\theta_{n,1},\ldots,\theta_{n,d_n}$ denote all phase lifts of the finite-dimensional path, then the selected $N$ phases form only a subcollection, while the remaining lifts are nonnegative.  Hence
    \[
    \pi N
    \le
    \sum_{j=1}^{d_n}\theta_{n,j}(t)
    =
    \int_0^t\tr L_n(s)\,ds.
    \]
    Letting $n\to\infty$ gives $\pi N\le B$.
    Since the finite collection was arbitrary, the total multiplicity
    in the lower semicircle is finite.  This proves \textup{(ii)}.

    For \textup{(iii)}, choose $\alpha<\beta<2\pi$ so that the $m$ selected eigenvalues of $U(b_1)$ lie in the open
    arc $(\alpha,\beta)$ and neither endpoint $e^{i\alpha}$ nor
    $e^{i\beta}$ belongs to the relevant endpoint spectra.  This is
    possible by part \textup{(ii)}.

    Let $n_{(\alpha,\beta)}(t)$ denote the spectral multiplicity in this arc, and let
    $c_\alpha(a,b_1)$ and $c_\beta(a,b_1)$ denote the corresponding
    crossing multiplicities.  Since all crossings are positive, a
    crossing through $\alpha$ enters the arc and a crossing through
    $\beta$ leaves it.  The usual spectral-flow balance therefore gives
    \begin{equation*}
        n_{(\alpha,\beta)}(b_1)
        -
        n_{(\alpha,\beta)}(a)
        =
        c_\alpha(a,b_1)-c_\beta(a,b_1);
    \end{equation*}
     see \cite[Section~3]{Pushnitski2001} for the unitary formulation.

    By assumption, $n_{(\alpha,\beta)}(a)=0$, $n_{(\alpha,\beta)}(b_1)\ge m$,
    and $c_\beta(a,b_1)\ge0$.
    Consequently
    \[
    c_\alpha(a,b_1)
    =
    n_{(\alpha,\beta)}(b_1)+c_\beta(a,b_1)
    \ge m.
    \]
    This proves \textup{(iii)}.
\end{proof}

\subsection{Counting resonances and endpoint multiplicities}\label{d:sec:counting}

    We now combine the preceding results in the order in which they were
    proved.  Along the conformal family
    \[
    \Omega_r=F(r\D),
    \qquad 0<r\le1,
    \]
    each ordered Dirichlet eigenvalue decreases strictly as $r$ increases by the standard strict domain-monotonicity theorem for Dirichlet eigenvalues; see \cite{Davies}. 
    Thus every eigenvalue which lies at or below the fixed energy $k^2$ at
    $r=1$ crosses that energy exactly once.  At each such resonant radius,
    Proposition~\ref{d:prop:near} produces the same number of unitary
    phases near $2\pi$. Positivity of the radial generator gives positive crossings, and Lemma~\ref{d:lem:phase} bounds the number of selected crossings in terms of the integral of the Ky Fan partial sum $\Phi_m(L_{r,k})$.

    The exact area identity involves the full trace of $L_{r,k}$.
    Consequently the difference
    \[
    \tau_m(L_{r,k})
    =
    \tr L_{r,k}-\Phi_m(L_{r,k})
    \]
    does not appear in the Ky Fan bound for the first $m$ crossings. Its strict positivity yields the strict remainder in the final P\'olya inequality.

    \begin{proposition}
        \label{d:prop:remainder}
        Let $\Omega=F(\D)$,
        where $F$ is holomorphic and injective on a neighborhood of
        $\overline\D$.  For every $k>0$ and every integer $1\le m\le N_D^{\le}(k^2)$,
        one has
        \begin{equation}\label{d:eq:count-fan}
            2\pi m
            \le
            \int_0^1\Phi_m(L_{r,k})\,dr,
        \end{equation}
        and consequently
        \begin{equation}\label{d:eq:quantitative-analytic}
            k^2|\Omega|
            \ge
            4\pi m
            +
            2\int_0^1\tau_m(L_{r,k})\,dr
            >
            4\pi m.
        \end{equation}
    \end{proposition}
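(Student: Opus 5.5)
The plan is to apply Lemma~\ref{d:lem:phase} to the path $U(t)=U_t$, $t\in[0,1]$, with generator $L(t)=L_{t,k}$. Proposition~\ref{d:prop:radial} supplies every hypothesis. We have $U_0=I$. The generator $L_{r,k}$ is positive, injective (hence \eqref{d:eq:strict-generator} holds for $r>0$) and trace class, and it is trace-norm continuous including at $r=0$. Finally, $U_r$ is locally real analytic for $r>0$. Then $B=\int_0^1\tr L_{r,k}\,dr=\tfrac{k^2}{2}|\Omega|$ by \eqref{d:eq:radial-area} with $b=1$.

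Next we produce crossings. Let $n=N_D^{\le}(k^2)$. By strict monotonicity and continuity of $r\mapsto\lambda_j(\Omega_r)$, and since $\lambda_j(\Omega_r)\to\infty$ as $r\to0$, each $\lambda_j$ with $j\le n$ equals $k^2$ at exactly one radius $r_j\in(0,1]$. Group these radii into distinct resonant radii $r_*$ with multiplicities $m_*$, so that $\sum m_*=n$. The endpoint $r_*=1$ may occur, which is why we argue on $(a,b_1)$ with $b_1<r_*$. Fix $\alpha\in(\pi,2\pi)$. Only finitely many resonant radii lie in $(0,1]$, so nonresonant radii are dense. For each $r_*$, Proposition~\ref{d:prop:near} gives a nonresonant $b_1<r_*$, arbitrarily close to $r_*$ and beyond the previous resonant radius, such that $U_{b_1}$ has at least $m_*$ eigenvalues with arguments in $(\alpha,2\pi)$. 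By Lemma~\ref{d:lem:phase}(ii) we may adjust $b_1$ slightly so that $e^{i\alpha}\notin\sigma(U_{b_1})$; the set of crossing times of $e^{i\alpha}$ is finite by (i). For the starting point $a$, I will use small $a>0$. Since $\|U_a-I\|\to0$ as $a\to0$, $U_a$ has no spectrum with argument in $[\alpha,2\pi)$ when $\alpha$ is bounded away from $2\pi$ in the sense $|e^{i\alpha}-1|>\|U_a-I\|$. Applying (iii) on $(a,b_1)$ with $m$ replaced by the number of eigenvalues accumulated up to that radius shows the following. Take $b_1$ just below the largest resonant radius $\le$ the $m$-th one, so that all earlier resonances produce phases which, by positivity of crossings, stay beyond $\alpha$ or continue to the cut. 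Then we need at least $m$ crossings of $e^{i\alpha}$ in $(0,1)$.

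The main obstacle is this cumulative counting. Phases produced by an earlier resonance may pass through $1$ and wrap around, so they would no longer lie in $(\alpha,2\pi)$ at the later $b_1$. The remedy is to count crossings per resonance instead. For each distinct $r_*$, apply (iii) on a short interval $(a_*,b_*)$ just below $r_*$. The eigenphases near $2\pi$ at $b_*$ must have crossed $e^{i\alpha}$ recently. I would choose $\alpha$ close to $2\pi$ and $a_*$ between consecutive resonances, and check that $U_{a_*}$ has no spectrum in $[\alpha,2\pi)$. This is plausible because the finitely many eigenphases in the lower semicircle at the fixed time $a_*$ (Lemma~\ref{d:lem:phase}(ii)) have some maximum argument $<2\pi$: $1$ is not an eigenvalue at a nonresonant time by Lemma~\ref{d:lem:injective}, and these are isolated eigenvalues. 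Since $\alpha$ must be fixed before choosing the $b_*$, I would first choose the finitely many $a_*$, then $\alpha$ exceeding all their phases, then the $b_*$. Summing over disjoint intervals gives at least $m$ crossings of $e^{i\alpha}$ in $(0,1)$. Hence \eqref{d:eq:fan-cost} yields $\alpha m\le\int_0^1\Phi_m(L_{r,k})\,dr$, and letting $\alpha\to2\pi$ gives \eqref{d:eq:count-fan}.

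Finally, write $\Phi_m=\tr L-\tau_m$ and use the area identity:
\[
2\pi m\le \frac{k^2}{2}|\Omega|-\int_0^1\tau_m(L_{r,k})\,dr .
\]
Multiplying by $2$ gives the first inequality in \eqref{d:eq:quantitative-analytic}. For strictness, $\tau_m(L_{r,k})>0$ for every $r>0$ by \eqref{d:eq:positive-tail}, and $r\mapsto\tau_m(L_{r,k})$ is continuous by \eqref{d:eq:tail-continuity} and trace-norm continuity. So its integral over $[0,1]$ is strictly positive.
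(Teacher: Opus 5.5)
Your final argument is the paper's proof. You choose the $a_\ell$ between consecutive resonant radii, take $\alpha$ beyond all lower-semicircle phases of the $U_{a_\ell}$ via Lemma~\ref{d:lem:phase}(ii), and take $b_\ell<r_\ell$ from Proposition~\ref{d:prop:near}; then you apply Lemma~\ref{d:lem:phase}(iii) on the disjoint intervals $(a_\ell,b_\ell)$, use (i), let $\alpha\to2\pi$, and finish with the area identity and $\tau_m>0$. Drop the preliminary small-$a$ detour, since the bound $\|U_a-I\|<|e^{i\alpha}-1|$ does not exclude arguments near $2\pi$; the appeal to Lemma~\ref{d:lem:injective} is also unnecessary, because the finiteness in (ii) already gives a maximal argument below $2\pi$.
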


    \begin{proof}
        For simplicity denote $N:=N_D^{\le}(k^2)$.
        By the standard continuity and strict domain monotonicity of Dirichlet eigenvalues, together with $\lambda_j(\Omega_r)\to+\infty$ as $r\to0+$, each 
    $1\le j\le N$ has a unique radius $\rho_j\in(0,1]$ such that
    $\lambda_j(\Omega_{\rho_j})=k^2$. 
        For $j>N$ no such crossing occurs.  Group the equal values among
        $\rho_1,\ldots,\rho_N$ as $0<r_1<\cdots<r_q\le1$,
        and let $m_\ell$ be the multiplicity of the resonance at $r_\ell$.
        Then
        \begin{equation}\label{d:eq:count}
            \sum_{\ell=1}^q m_\ell=N_D^{\le}(k^2).
        \end{equation}
        This formulation automatically includes the possibility
        $r_q=1$, so eigenvalues equal to $k^2$ at the endpoint are counted
        with their full multiplicity.

        Choose nonresonant radii
        \[
        a_1\in(0,r_1),
        \qquad
        a_\ell\in(r_{\ell-1},r_\ell),
        \quad 2\le\ell\le q.
        \]
        By Lemma~\ref{d:lem:phase}\textup{(ii)}, each operator
        $U_{a_\ell,k}$ has only finitely many eigenvalues in the open lower
        semicircle.  Since there are only finitely many radii $a_\ell$, we
        may choose a single number $\alpha_0\in(\pi,2\pi)$ so close to $2\pi$ that none of the operators $U_{a_\ell,k}$ has
        spectrum with argument in $[\alpha_0,2\pi)$.

        Fix $\alpha\in(\alpha_0,2\pi)$. For each resonance $r_\ell$, Proposition~\ref{d:prop:near} gives,
        at every sufficiently close nonresonant radius below $r_\ell$, at
        least $m_\ell$ eigenvalues of $U_{r,k}$ with arguments in
        $(\alpha,2\pi)$.  We may therefore choose
        $b_\ell\in(a_\ell,r_\ell)$
        such that $U_{b_\ell,k}$ has at least $m_\ell$ such eigenvalues.
        After an arbitrarily small adjustment of $b_\ell$, if necessary, we
        may also arrange $e^{i\alpha}\notin\sigma(U_{b_\ell,k})$,
        because crossings of $e^{i\alpha}$ are isolated.

        Lemma~\ref{d:lem:phase}\textup{(iii)} now applies on each interval
        $(a_\ell,b_\ell)$ and gives at least $m_\ell$ crossings of
$e^{i\alpha}$ there. These intervals are pairwise disjoint, so
\eqref{d:eq:count} shows that the full radial path has at least
        \[
        \sum_{\ell=1}^q m_\ell
        =
        N_D^{\le}(k^2)
        \]
        crossings of $e^{i\alpha}$.
        In particular, for every $1\le m\le N_D^{\le}(k^2)$,
        we may select $m$ of these crossings.  Lemma~\ref{d:lem:phase}
        \textup{(i)} then gives
        \[
        \alpha m
        \le
        \int_0^1\Phi_m(L_{r,k})\,dr.
        \]
        The right-hand side does not depend on $\alpha$.  Letting
        $\alpha \to2\pi-$ yields
        \[
        2\pi m
        \le
        \int_0^1\Phi_m(L_{r,k})\,dr,
        \]
        which is \eqref{d:eq:count-fan}.

        Finally, by the radial area identity
        \eqref{d:eq:radial-area},
        \[
        \frac{k^2}{2}|\Omega|
        =
        \int_0^1\tr L_{r,k}\,dr.
        \]
        Using $\tr L_{r,k} = \Phi_m(L_{r,k})+\tau_m(L_{r,k})$,
        we obtain
        \begin{align*}
            \frac{k^2}{2}|\Omega| =
            \int_0^1\Phi_m(L_{r,k})\,dr
            +
            \int_0^1\tau_m(L_{r,k})\,dr \ge
            2\pi m
            +
            \int_0^1\tau_m(L_{r,k})\,dr.
        \end{align*}
        Multiplying by $2$ proves the first inequality in
        \eqref{d:eq:quantitative-analytic}.

        The inequality is strict.  Indeed, for every $r>0$,
        Proposition~\ref{d:prop:radial} shows that $L_{r,k}$ is positive,
        trace class, and injective on the infinite-dimensional space $\HH$.
        Hence $\tau_m(L_{r,k})>0$.
        Its continuity in $r$ follows from the trace-norm continuity of
        $L_{r,k}$ and \eqref{d:eq:tail-continuity}.  Therefore
        \[
        \int_0^1\tau_m(L_{r,k})\,dr>0,
        \]
        which completes the proof.
    \end{proof}

    \subsection{Conformal exhaustion and completion of the proof}
\label{d:sec:completion} 
    Proposition~\ref{d:prop:remainder} was proved under the temporary
    assumption that the Riemann map extends holomorphically beyond
    $\overline\D$.  We now remove that assumption.

    \begin{proof}[Proof of Theorem \ref{d:thm:main}]
        Let $\Omega\subset\mathbb R^2$ be a bounded simply connected domain.
        By the Riemann mapping theorem there is a conformal bijection $
        F:\D\longrightarrow\Omega$. 
        For $0<s<1$, set $\Omega_s:=F(s\D)$.
        Then $\Omega_s\uparrow\Omega$ as $s \to1-$.
        Moreover, the map $F_s(z):=F(sz)$
        is holomorphic and injective on a neighborhood of $\overline\D$.
        Thus Proposition~\ref{d:prop:remainder} applies to every
        $\Omega_s$.
        The boundary transports obtained from $F$ on different compact radial
        intervals agree by uniqueness of the transport equations.  We may
        therefore use the notation
        \[
        L_{r,k},
        \qquad
        0<r<1,
        \]
        for the radial generator associated with the original Riemann map
        $F$.

        Fix $j\ge1$.   By the standard monotone convergence of Dirichlet forms and eigenvalues under domain exhaustion, we have the spectral convergence
        \begin{equation}\label{d:eq:eigenvalue-exhaustion}
            \lambda_j(\Omega_s)
            \longrightarrow
            \lambda_j(\Omega)
            \qquad(s \to1-).
        \end{equation}
        Set
        \begin{equation*}
            k_s:=\sqrt{\lambda_j(\Omega_s)},
            \qquad
            k:=\sqrt{\lambda_j(\Omega)}.
        \end{equation*}
        By \eqref{d:eq:eigenvalue-exhaustion},
$k_s\longrightarrow k>0$; moreover,
$|\Omega_s|\longrightarrow|\Omega|$.

        Apply Proposition~\ref{d:prop:remainder} to the conformal map
        $F_s(z)=F(sz)$, at the energy $k_s^2$, and choose $m=j$.
        This is legitimate because $
        N_{D,\Omega_s}^{\le}(k_s^2)\ge j$,
        including the full multiplicity of the eigenvalue
        $\lambda_j(\Omega_s)$.

        We now relate the radial generator for $F_s$ to that for $F$.
        If $t\in[0,1]$ is the radial parameter for $F_s$, then
        \[
        F_s(t\D)=F(st\D)=\Omega_{st}.
        \]
        Hence its unitary path is $t\longmapsto U_{st,k_s}$.
        Differentiating with respect to $t$ shows that its generator is $\widetilde L^{(s)}_{t,k_s}=sL_{st,k_s}$.
        The same reparametrization applies to the base-point correction
        $R_r$, since it is defined by its radial differential equation.

        Because $\tau_j$ is positively homogeneous,
        \[
        \tau_j(sA)=s\,\tau_j(A)
        \qquad(s\ge0),
        \]
        Proposition~\ref{d:prop:remainder} therefore gives
        \begin{equation}\label{d:eq:exhaustion-remainder}
            |\Omega_s|\lambda_j(\Omega_s)
            \ge
            4\pi j
            +
            2\int_0^s\tau_j(L_{r,k_s})\,dr.
        \end{equation}

        We now pass to the limit while retaining this remainder.
        For each fixed $0<r<1$,  choose $b$ with $r<b<1$ and apply Proposition~\ref{d:prop:radial} to the map $F_b(z)=F(bz)$.  Since the transport constructed from $F_b$ is just the reparametrized restriction of that constructed from $F$, the proposition gives the required local joint trace-norm continuity of $L_{r,k}$ on compact subsets of $(0,1)\times(0,\infty)$.  Together with \eqref{d:eq:tail-continuity},  this gives
        \begin{equation}\label{d:eq:tail-pointwise}
            \tau_j(L_{r,k_s})
            \longrightarrow
            \tau_j(L_{r,k})
            \qquad(s \to1-).
        \end{equation}

        Choose any sequence $s_n \to1-$.
        On $(0,1)$ define the nonnegative functions
        \[
        f_n(r)
        :=
        \mathbf 1_{(0,s_n)}(r)\,
        \tau_j(L_{r,k_{s_n}}).
        \]
        For every fixed $r\in(0,1)$, we have eventually $r<s_n$, and
        \eqref{d:eq:tail-pointwise} gives
        \[
        f_n(r)\longrightarrow\tau_j(L_{r,k}).
        \]
        Fatou's lemma and \eqref{d:eq:exhaustion-remainder} therefore imply
        \begin{equation}\label{d:eq:quantitative-rough}
            |\Omega|\lambda_j(\Omega)
            \ge
            4\pi j
            +
            2\int_0^1\tau_j(L_{r,k})\,dr.
        \end{equation}

        The integral is finite.  Indeed,
        \[
        0\le\tau_j(L_{r,k})\le\tr L_{r,k},
        \]
        and the same localization to $F_b$ shows that  the radial area identity on every compact interval
        $[0,b]\subset[0,1)$ gives
        \[
        \int_0^b\tr L_{r,k}\,dr
        =
        \frac{k^2}{2}|F(b\D)|.
        \]
        Letting $b \to1-$ and using monotone convergence yields
        \[
        \int_0^1\tr L_{r,k}\,dr
        =
        \frac{k^2}{2}|\Omega|.
        \]

        More importantly, the remainder in
        \eqref{d:eq:quantitative-rough} is strictly positive.  For every
        $r\in(0,1)$, Proposition~\ref{d:prop:radial} gives that
        $L_{r,k}$ is positive and injective on the infinite-dimensional
        space $\HH$. Hence \eqref{d:eq:positive-tail} gives
$\tau_j(L_{r,k})>0$.
        Choose any $r_0\in(0,1)$.  By trace-norm continuity,
        $r\mapsto\tau_j(L_{r,k})$ is continuous near $r_0$, and therefore
        remains positive on a nontrivial interval around $r_0$.  Consequently,
        \[
        \int_0^1\tau_j(L_{r,k})\,dr>0.
        \]
        Equation \eqref{d:eq:quantitative-rough} now gives $
        |\Omega|\lambda_j(\Omega)>4\pi j$,
        which is \eqref{d:eq:strict-eigenvalues}.

        Finally let $E>0$.  If $N_D^{\le}(E)=0$,
        then $N_D^{\le}(E)<\frac{|\Omega|E}{4\pi}$
        is immediate.  Otherwise set $ M:=N_D^{\le}(E)$.
        Then $\lambda_M(\Omega)\le E$,
        and the strict eigenvalue inequality gives
        \[
        4\pi M
    <
        |\Omega|\lambda_M(\Omega)
        \le
        |\Omega|E.
        \]
        Hence $N_D^{\le}(E)< \frac{|\Omega|E}{4\pi}$,
        which is \eqref{d:eq:polya}.  This completes the Dirichlet proof.
    \end{proof}

    \begin{remark}
        The remainder in \eqref{d:eq:quantitative-rough} depends on the
        domain, the chosen conformal parametrization, and the index $j$.
        Only its strict positivity is used here; no lower bound depending
        only on $j$ is asserted.
    \end{remark}

\end{document}